\documentclass[12pt]{article}
\usepackage{evan}
\usepackage{bbm}
\usepackage{mdframed}

\definecolor{shadecolor}{rgb}{0.9, 0.9, 0.9}

\usepackage{fullpage,amsfonts,amsmath,amsthm,amssymb,mathrsfs,epstopdf}
\usepackage[top=2.54cm, bottom=2.54cm, left=2.54 cm, right=2.54 cm]{geometry}
\usepackage{graphicx}
\usepackage{caption}
\usepackage{subcaption}
\usepackage{titling} 

\usepackage[shortlabels]{enumitem}
\usepackage{thmtools}
\newtheorem{lemma}[theorem]{Lemma}
\newtheorem*{definition*}{Definition}
\newtheorem*{remark*}{Remark}
\newtheorem*{example*}{Example}
\newtheorem*{theorem*}{Theorem}

\def\pr{{\mathbb P}}
\def\ex{{\mathbb E}}

\begin{document}

\predate{}
\postdate{}

\author{Thinula De Silva\\ University of Waterloo\\ t2desilva@uwaterloo.ca,\and Pu Gao\thanks{Research supported by NSERC RGPIN-04173-2019.}\\ University of Waterloo\\ pu.gao@uwaterloo.ca}
\title{Non-uniform Kahn-Kalai, spread, variants, and applications}
\date{} 

\maketitle

\def\G{{\mathcal G}}
\def\F{{\mathcal F}}
\def\P{{\mathcal P}}
\def\bfU{{\bf U}}

\renewcommand{\vec}[1]{\textbf{#1}}
\newcommand{\prob}[1]{\mathbb{P}\left(#1\right)}
\newcommand{\expect}[1]{\mathbb{E}\left(#1\right)}
\newcommand{\var}[1]{\operatorname{Var}\left(#1\right)}
\newcommand{\C}{\mathbb{C}}
\newcommand{\R}{\mathbb{R}}
\newcommand{\N}{\mathbb{N}}
\newcommand{\Z}{\mathbb{Z}}
\newcommand{\gen}[1]{\left\langle #1 \right\rangle}
\renewcommand{\vocab}[1]{{\it #1}}
\newcommand{\notetoself}[1]{\textbf{\color{blue} #1}}

\newcommand{\jc}[1]{{\color{red}[Jane: #1]}}
\newcommand{\tc}[1]{{\color{olive}[Thinula: #1]}}
\newcommand{\tr}[1]{\color{violet} #1}
\newcommand{\remove}[1]{{}}

\newcommand\pdfmath[1]{\texorpdfstring{$#1$}{}}

\newcommand{\nvec}{\vec{n}}

\begin{abstract}
Building on B.~Park and Vondr{\'a}k's recent generalization of the J.Park-Pham Theorem (formerly known as Kahn-Kalai conjecture) to non-uniform probability measures, this paper introduces the notion of ``spread" for the non-uniform setting. This provides a framework to establish 1-statements for subgraph containment in inhomogeneous random graphs with or without a set of forced edges. Using this approach, we derived conditions for the emergence of perfect matchings in the Stochastic Block Model and the Chung-Lu model, and verified that these conditions are in general not tight, but they capture thresholds across a broad range of regimes. Finally, we bridge this non-uniform framework with $\mathcal{G}(n,\vec{d})$, utilizing a coupling argument to demonstrate thresholds for perfect matchings in $\mathcal{G}(n,\vec{d})$ for a broad range of degree sequences $\vec{d}$. 
\end{abstract}

\section{Introduction}

Random graph theory, initiated by Erd\H{o}s and R\'{e}nyi in the late 1950s, has become a central field in modern graph theory and theoretical computer science. One of the most striking phenomena in random graph theory is the abrupt structural transformation known as a phase transition, where a small change in edge probability leads to a sudden change in graph structures or parameters. 
Friedgut and Kalai proved that every monotone graph property exhibits a phase transition~\cite{Friedgut1996EveryMG}. Determining the threshold at which a given monotone graph property undergoes the phase transition is a problem-specific task  and lies at the heart of random graph theory. The famous Kahn-Kalai conjecture, proposed by Kahn and Kalai in 2006, relates the threshold of an increasing family to a natural lower bound called the expectation threshold. The conjecture asserts that this lower bound is, up to a logarithmic factor, close to the actual threshold.

This conjecture is stated in a more general setting.
Let $X$ be a finite set. In the example of random graph $\G(n,p)$, $X=\binom{[n]}{2}$ is the set of edges in a complete graph on $n$ vertices. Let $\mathcal{F}\subseteq 2^X$ be an increasing family, where $\mathcal{F}\neq\emptyset, 2^X$. As an example of increasing graph properties, we can think of ${\mathcal F}$ being ${\mathcal F}_{\texttt{PM}}$, the set of graphs on $[n]$ containing a perfect matching (assuming that $n$ is even). Given $p\in [0,1]$, a probability measure $\mu_p$ on the set of subsets of $X$ is defined by
\[
\mu_p(S) = p^{\abs{S}}(1-p)^{\abs{X\setminus S}} \quad \text{for all $S\subseteq X$}.
\]
It is not hard to show that
\[
\mu_p(\mathcal{F}) := \sum\limits_{S\in\mathcal{F}} \mu_p(S)
\]
 is strictly increasing in $p$ with $\mu_0(\F)=0$ and $\mu_1(\F)=1$. Thus, there is a unique value $p_c(\mathcal{F})$, called the \vocab{threshold} of $\F$, such that $u_p(\mathcal{F}) = 1/2$ when $p=p_c(\mathcal{F})$. As noted in \cite{Bollobs1987ThresholdF},
\begin{center}
    $\lim\limits_{n\rightarrow\infty} \mu_p(\mathcal{F}) = \begin{cases} 0 &\text{ if } p\ll p_c(\mathcal{F})\\ 1 &\text{ if } p\gg p_c(\mathcal{F})\end{cases}$
\end{center}
and thus $p_c(\mathcal{F})$ marks the phase transition where the measure $\F$ undergoes the sudden change from 0 to 1.

Determining $p_c(\mathcal{F})$ involves identifying $p_c$, usually a function of $N=|X|$, and proving that 
\[
{\text{{\em 0-statement:}}}\quad \mu_p(\F)=o(1), \quad\text{if $p\ll p_c$};
\]
and
\[
{\text{{\em 1-statement:}}}\quad\mu_p(\F)=1-o(1), \quad\text{if $p\gg p_c$}.
\]
In many situations, establishing the 0-statement is a much easier task than the 1-statement, as typically one can easily prove the existence of certain obstructions for $\F$, e.g.\ isolated vertices for $\F={\mathcal F}_{\texttt{PM}}$. The Kahn-Kalai conjecture states that
$p_c(\F)=O(p_E(\F) \log N)$, where $p_E(\F)$ is the so-called expectation threshold, which will be formally defined in Section~\ref{section: Uniform Spread Intro}. Roughly speaking, it can be viewed as the ``best lower bound'' that can be obtained by considering the expected number of obstructions.

We highlight two key advances on the Kahn-Kalai conjecture, which are restated as Theorem~\ref{Uniform Kahn-Kalai Theorem} and Theorem~\ref{Uniform Fractional Kahn-Kalai Theorem} respectively. First, Park and Pham~\cite{park2023proofkahnkalaiconjecture} confirmed the conjecture, which reduces determining thresholds for an increasing family to estimating the expectation threshold. However, the latter still remains challenging as it involves optimizing the ``covers'' of the given increasing family $\F$. Second, an earlier work by Frankston, Kahn, Narayanan, and Park~\cite{frankston2019thresholds} uses the dual of the cover optimization to provide an upper bound for the primal and thereby demonstrate the 1-statement. The dual variables have a combinatorial interpretation known as \vocab{spread}. Proving the 1-statement for $\F$ involves estimating the spread of structures related to $\F$. This approach has significantly advanced threshold determination for long-standing problems in random graph theory (such as the square of Hamilton cycles in~\cite{kahn2020thresholdsquarehamiltoncycle}, the bounded degree subgraphs in~\cite{frankston2019thresholds}, the Erd\H{o}s conjecture about high girth Steiner triple systems in~\cite{kwan2024highgirthsteinertriplesystems}, etc.) while also greatly simplifying proofs for existing theorems with very involved proofs such as the bounded degree spanning trees (originally proved by Montgomery in~\cite{montgomery2019spanningtreesrandomgraphs} and later proved using Kahn-Kalai in~\cite{frankston2019thresholds}).

In this paper, we discuss a few generalizations and variants of the Park-Pham theorem (i.e.\ the Kahn-Kalai conjecture). The Park-Pham theorem concerns a probability measure on the subsets of $X$ by including every element of $X$ with a uniform probability $p$. What happens if the inclusion probability is not uniform for the elements? 
In a recent work~\cite{park2023simple}, B.~Park and Vondr{\'a}k proved that the Park-Pham theorem holds for this generalized version; see the precise statement in Section~\ref{section: Non-Uniform Spread}.
As before, this theorem relates two thresholds and does not directly yield an estimate for the threshold of an increasing family $\F$. Indeed, the notion of ``threshold'' is not so clear any more, as we are dealing with  vectors $\vec{p}\in [0,1]^X$ rather than a single real probability $p$. We first extend the notion of thresholds and phase transitions in this more general setting.
Given $\vec{p}\in [0,1]^X$, let $\mu_{\vec{p}}$ be the product measure on $2^X$ where
\begin{equation}
    \mu_{\vec{p}}(S) := \prod\limits_{x\in S} p_x\prod\limits_{y\notin S} (1-p_y)\quad \text{for all $S\subseteq X$} \label{def:measure}
\end{equation}
 and $\mu_{\vec{p}}(\mathcal{F}) := \sum\limits_{S\in\mathcal{F}} \mu_{\vec{p}}(S)$. Let $X_{\vec{p}}$ be a random subset of $X$ drawn from $\mu_{\vec{p}}$. 
In the uniform case,  $p_c(\mathcal{F})$ is defined by $\mu^{-1}(1/2)$, where $\mu=\mu_{p}({\mathcal F})$ is viewed as a function of $p$, which is known to be continuous and increasing since ${\mathcal F}$ is an increasing family.

This notation of threshold naturally generalises to  the non-uniform setting, where we define $p_c(\mathcal{F})$ to be the following set of $\vec{p}$:
\begin{equation}
\{\vec{p}\in [0,1]^X: \mu_{\vec{p}}(\mathcal{F})=1/2\}. \label{def:critical}
\end{equation}
The theorem below demonstrates that phase transitions do occur around the values of $\vec{p}$ in the set~\eqref{def:critical}.

\begin{theorem}
\label{theorem: Non-Uniform Friedgut}
Let $\mathcal{F}\neq\emptyset, 2^X$ be an increasing family such that $\mathcal{F}\subseteq 2^X$ and $0<\eps<1$ be fixed. If $\vec{p}^*\in [0,1]^X$ such that $\eps<\mu_{\vec{p}^*}(\mathcal{F})<1-\eps $ and $\alpha = \omega(1)$, then
\begin{center}
    $\lim\limits_{\abs{X}\rightarrow\infty} \mu_{\vec{p}}(\mathcal{F}) = \begin{cases} 0 &\text{ if } \vec{p}\leq\vec{p}^*/\alpha\\ 1 &\text{ if } \vec{p}\geq\alpha\vec{p}^*\end{cases}$.
\end{center}
\end{theorem}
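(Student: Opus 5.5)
The plan is to adapt the classical Bollobás--Thomason argument: take the union of several independent copies of a random set, and couple product measures coordinatewise. The only new point in the non-uniform setting is that the coupling can be done separately for each coordinate $x\in X$. Since $\alpha=\omega(1)$, we may assume $\alpha\ge 1$ and set $k=\lfloor \alpha\rfloor$, so that $k\to\infty$ and $k\le\alpha$.

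\emph{Coupling facts.} First, if $\vec{q}\le \vec{q}'$ coordinatewise, then $\mu_{\vec{q}}(\F)\le\mu_{\vec{q}'}(\F)$. To see this, couple $X_{\vec q}\subseteq X_{\vec q'}$ using one independent uniform variable $U_x$ per coordinate, and use that $\F$ is increasing. Second, let $Y_1,\dots,Y_k$ be independent copies of $X_{\vec{q}}$. Then $Y_1\cup\dots\cup Y_k$ has exactly the law of $X_{\vec{r}}$, where
\[
r_x = 1-(1-q_x)^k \le \min(1,kq_x).
\]
This holds because the coordinates remain independent, and the bound is Bernoulli's inequality. Third, because $\F$ is increasing, $Y_1\cup\dots\cup Y_k\notin\F$ forces every $Y_i\notin\F$. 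In the other direction, some $Y_i\in\F$ forces the union to lie in $\F$.

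\emph{1-statement.} Take $\vec q=\vec p^*$. The union of $k$ copies has law $X_{\vec r}$ with $r_x\le\min(1,kp^*_x)\le\min(1,\alpha p^*_x)\le p_x$. Here the last inequality uses $\vec p\ge\alpha\vec p^*$ together with $p_x\le 1$. By monotonicity and the third fact,
\[
1-\mu_{\vec p}(\F)\le 1-\mu_{\vec r}(\F)\le (1-\mu_{\vec p^*}(\F))^k<(1-\eps)^k\to 0 .
\]

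\emph{0-statement.} Take $\vec q=\vec p$ with $\vec p\le\vec p^*/\alpha$. The union of $k$ copies of $X_{\vec p}$ has law $X_{\vec r}$ with $r_x\le kp_x\le p^*_x$. Therefore
\[
1-(1-\mu_{\vec p}(\F))^k\le \mu_{\vec r}(\F)\le\mu_{\vec p^*}(\F)<1-\eps .
\]
This gives $(1-\mu_{\vec p}(\F))^k>\eps$, and hence
\[
\mu_{\vec p}(\F)<1-\eps^{1/k}\le \log(1/\eps)/k\to0 .
\]

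The main subtlety, though it is modest, is the 0-statement. The implication runs the other way there, so one must use ``some copy in $\F$ implies the union is in $\F$'' rather than the product bound. One also has to make sure that $\alpha$ may be non-integral, which is handled by taking $k=\lfloor\alpha\rfloor$, and that capping at $1$ in the 1-statement is harmless. Everything is uniform in $X$ and depends only on $\eps$ and $k$, so the limits hold as $|X|\to\infty$.
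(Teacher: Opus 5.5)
Your proof is correct and follows the paper's argument. Both use the Bollob\'as--Thomason trick: the union of $k$ independent copies of $X_{\vec{q}}$ is distributed as $X_{\vec{r}}$ with $r_x=1-(1-q_x)^k\le kq_x$, and $\mathbb{P}(X_{\vec{r}}\notin\mathcal{F})\le(1-\mu_{\vec{q}}(\mathcal{F}))^k$, applied at $\vec{q}=\vec{p}^*$ for the 1-statement and at the small vector for the 0-statement.

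You are somewhat more careful than the paper, which takes $k=\alpha$ without rounding, leaves the passage to general $\vec{p}$ by monotonicity implicit, and writes the product bound as an equality (false in general). Your ``some copy lies in $\mathcal{F}$'' step in the 0-statement is just the contrapositive of that same product bound, not a different ingredient.
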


\begin{remark*}
\normalfont
Such $\vec{p}^*$ always exists. To see this, note that if $\mathcal{F}\neq\emptyset, 2^X$, then for each $0 < \delta < 1$, there exists $\vec{p}\in [0,1]^X$ such that $\mu_{\vec{p}}\left(\mathcal{F}\right) = \delta$. This is because $\mu_{\vec{p}}\left(\mathcal{F}\right)$ is a polynomial in $\vec{p}$, and an increasing function that attains both 0 (at $\vec{p}=\vec{0}$) and 1 (at $\vec{p}=\vec{1}$), as $\mathcal{F}$ is an increasing family.    
\end{remark*}

However, unlike in the uniform setting, given $\vec{p}$, it is not clear which $\vec{p}^*$ it should be compared to in order to determine which of the following cases $\vec{p}$ falls into.
\begin{itemize}
    \item Supercritical case: $\vec{p}\gg \vec{p}^*$ for some $\vec{p}^*\in p_c({\mathcal F})$;
    \item Subcritical case: $\vec{p}\ll \vec{p}^*$ for some $\vec{p}^*\in p_c({\mathcal F})$; 
    \item Near the threshold case: $\vec{p}=\Theta( \vec{p}^*)$ for some $\vec{p}^*\in p_c({\mathcal F})$;
    \item None of the above.
\end{itemize}

To address this issue, we introduce the \vocab{threshold map}, which enables us to determine whether $\vec{p}$ is supercritical, subcritical, near the threshold, and to show that essentially these are the only three regimes for $\vec{p}$.

 Given a sequence of probability spaces indexed by $n$, we say a sequence of events $A_n$ occurs asymptotically almost surely (a.a.s.) if $\lim_{n\to\infty} \prob{A_n}=1$.

\begin{definition}\label{def:threshold}
Suppose that $f_{\mathcal{F}}:[0,1]^X\to {\mathbb R}_{\ge 0}$ satisfies
\begin{itemize}
    \item[(a)] (Subcritical) If $f_{\mathcal{F}}(\vec{p})\ll 1$ then a.a.s.\ $X_{\vec{p}}\notin\mathcal{F}$.

    \item[(b)] (Supercritical) If $f_{\mathcal{F}}(\vec{p})\gg 1$ then a.a.s.\ $X_{\vec{p}}\in\mathcal{F}$.
\item[(c)] (Near the threshold)
$\forall 0<\eps<1\ \exists n_0>0, \delta>0,\ \forall n>n_0,\ \vec{p}\in [0,1]^{X}:$
$$
\eps<f_{\mathcal{F}}(\vec{p})<\eps^{-1} \implies \delta<\mu_{\vec{p}}(\mathcal{F})<1-\delta.
$$
\end{itemize}
If $f_{\mathcal{F}}$ satisfies (a,b) then we call it a {\em threshold map} of $\F$. If $f_{\mathcal{F}}$ satisfies (c) additionally, then we call it a {\em faithful threshold map} of $\F$.
\end{definition}

By the subsubsequence principle, we may assume  that $\lim_{n\to\infty} f_{\mathcal{F}}(\vec{p})$ exists (including the case that the limit is $\infty$). Hence, any $\vec{p}$ must lie in one of the cases (a,b,c). Moreover, the value $f_{\mathcal{F}}(\vec{p})$ determines precisely which regime $\vec{p}$ lies in.

Our next theorem assures that for an increasing family ${\mathcal F}$, a faithful threshold map exists.

\begin{theorem}
\label{theorem: Existence of Non-Uniform Threshold}
Let $\mathcal{F}\subseteq 2^X$ be an increasing family such that $\mathcal{F}\neq\emptyset, 2^X$. Then, there exists an increasing continuous faithful threshold map $f_{\mathcal{F}}$ of $\mathcal{F}$.
\end{theorem}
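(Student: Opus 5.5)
We set $f_{\mathcal F}$ to be the reciprocal of the critical ``power'' at which $\vec p$ must be amplified to reach measure $1/2$. We amplify by taking unions of independent copies rather than by multiplying $\vec p$ by a scalar. For $\vec p\in[0,1]^X$ and $t>0$ let
\[
\vec p^{(t)} := \vec 1-(\vec 1-\vec p)^{t}\quad\text{(coordinatewise)},\qquad g_{\vec p}(t):=\mu_{\vec p^{(t)}}(\mathcal F).
\]
For an integer $k$, $X_{\vec p^{(k)}}$ has the law of the union of $k$ independent copies of $X_{\vec p}$. More generally $\vec p^{(s+t)}$ is the union of independent samples of $\vec p^{(s)}$ and $\vec p^{(t)}$. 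Hence $g_{\vec p}$ is continuous and nondecreasing in $t$, with $g_{\vec p}(0)=0$ since $\emptyset\notin\mathcal F$. Put $t^*(\vec p):=\inf\{t: g_{\vec p}(t)\ge 1/2\}$ and $f_{\mathcal F}(\vec p):=1/t^*(\vec p)$, with the convention $f_{\mathcal F}(\vec p)=0$ when $t^*=\infty$. If $t^*=\infty$, then as $t\to\infty$ we have $\vec p^{(t)}\to \mathbbm 1_{\mathrm{supp}(\vec p)}$, a deterministic set, and $g_{\vec p}(t)<1/2$ for all $t$. Hence $\mathrm{supp}(\vec p)\notin\mathcal F$, so $\mu_{\vec p}(\mathcal F)=0$. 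Monotonicity of $f_{\mathcal F}$ is immediate, since $\vec p\le\vec q$ implies $\vec p^{(t)}\le\vec q^{(t)}$ and thus $t^*(\vec q)\le t^*(\vec p)$.

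The key tool is the elementary amplification inequality: since $\mathcal F$ is increasing, for any $\vec q$ and integer $k\ge1$,
\[
\mu_{\vec q^{(k)}}(\mathcal F)\ \ge\ 1-\bigl(1-\mu_{\vec q}(\mathcal F)\bigr)^k ,
\]
because the union lies in $\mathcal F$ as soon as one copy does. By continuity, $\vec p^*:=\vec p^{(t^*)}$ has $\mu_{\vec p^*}(\mathcal F)=1/2$.

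For (b), suppose $f_{\mathcal F}(\vec p)\gg1$, i.e.\ $t^*\to0$. Then $\vec p=(\vec p^*)^{(1/t^*)}\ge(\vec p^*)^{(k)}$ with $k=\lfloor 1/t^*\rfloor\to\infty$. So $\mu_{\vec p}(\mathcal F)\ge 1-2^{-k}\to1$.

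For (a), suppose $f_{\mathcal F}(\vec p)\ll1$. Either $t^*=\infty$ and we are done, or $t^*\to\infty$. Assume for contradiction that $\mu_{\vec p}(\mathcal F)\ge\delta$ along a subsequence. Pick a fixed $k$ with $1-(1-\delta)^k>1/2$. Then $g_{\vec p}(k)>1/2$, so $t^*\le k$, a contradiction.

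For (c), let $\eps<f_{\mathcal F}(\vec p)<\eps^{-1}$, so $\eps<t^*<\eps^{-1}$, and let $k:=\lceil \eps^{-1}\rceil$. For the lower bound, $\vec p^*\le \vec p^{(k)}$, so $1/2\le 1-(1-\mu_{\vec p}(\mathcal F))^k$, giving $\mu_{\vec p}(\mathcal F)\ge 1-2^{-1/k}$. For the upper bound, $\vec p\le (\vec p^*)^{(k)}$ is not what we want. Instead we compare from the other side: set $\vec q:=\vec p^{(t^*/k')}$ for a suitable fixed integer $k'$, and note that $\vec p=\vec q^{(k'/t^*)}$ with $k'/t^*\le k'k$. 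If $\mu_{\vec p}(\mathcal F)\ge 1-\delta$, then $\mu_{\vec p^{(s)}}(\mathcal F)$ is close to $1$ for all $s\ge1$, and we want to contradict $g_{\vec p}(t^*)=1/2$. Here the union inequality alone does not push measure downward. So I would instead apply Theorem~\ref{theorem: Non-Uniform Friedgut}-style reasoning, or more directly use that $g_{\vec p}(t)<1/2$ for all $t<t^*$ together with a uniform version of the amplification inequality, and get $\delta$ depending only on $\eps$. Everything is uniform in $n$, so $n_0$ can be taken trivially.

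The main obstacle I expect is this upper half of (c), together with continuity of $f_{\mathcal F}$. For continuity, $g_{\vec p}(t)$ is jointly continuous in $(\vec p,t)$ on $[0,1]^X\times(0,\infty)$ (a polynomial in the coordinates $(1-p_x)^t$). I would show that $g_{\vec p}$ is strictly increasing at $t^*$ whenever $0<g<1$, using the positive derivative coming from Russo's formula. Then $t^*(\vec p)$ is continuous by an implicit-function/monotonicity argument. The delicate points are those where $t^*$ jumps to $\infty$, i.e.\ where $\mathrm{supp}(\vec p)$ changes. There I would check directly that $t^*\to\infty$ (so $f_{\mathcal F}\to0$) as coordinates needed for $\mathcal F$ tend to $0$.
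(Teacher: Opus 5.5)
Parts (a) and (b) and the monotonicity of $f_{\mathcal{F}}$ are fine; they are essentially the amplification argument of Theorem~\ref{theorem: Non-Uniform Friedgut}. The gap is that the map you construct is \emph{not faithful}, so condition (c) cannot be proved for it.

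The lower half of your (c) argument runs the amplification inequality backwards. From $\vec{p}^*\le\vec{p}^{(k)}$ you only get $\mu_{\vec{p}^{(k)}}(\mathcal{F})\ge 1/2$. The inequality $\mu_{\vec{p}^{(k)}}(\mathcal{F})\ge 1-(1-\mu_{\vec{p}}(\mathcal{F}))^k$ is another \emph{lower} bound on that same quantity, so nothing follows about $\mu_{\vec{p}}(\mathcal{F})$. The reverse inequality is false, since a union of $k$ independent copies can lie in $\mathcal{F}$ although no single copy does.

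The upper half, which you flagged as the obstacle, is not merely delicate but false, and so is the lower half. Take $X=A$ with $|A|=m\to\infty$, $\mathcal{F}=\{A\}$ and $\vec{p}=p\vec{1}$, so $g_{\vec{p}}(t)=(1-(1-p)^t)^m$.
Choosing $p$ with $t^*=2$ forces $1-p=(1+o(1))\sqrt{\ln 2/m}$. Hence $f_{\mathcal{F}}(\vec{p})=1/2$, but $\mu_{\vec{p}}(\mathcal{F})=p^m=\exp(-(1+o(1))\sqrt{m\ln 2})\to 0$.
Choosing $p$ with $t^*=1/2$ forces $1-p=(1+o(1))(\ln 2/m)^2$. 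Hence $f_{\mathcal{F}}(\vec{p})=2$, but $\mu_{\vec{p}}(\mathcal{F})=\exp(-(1+o(1))(\ln 2)^2/m)\to 1$.
Both violate (c) with $\varepsilon=1/3$. The same happens for natural families. For $\mathcal{F}_{\texttt{PM}}$ in $\mathcal{G}(n,p)$ with $p=\Theta(\log n/n)$, one has $p^{(t)}=(1+o(1))tp$ for bounded $t$, and the sharp threshold gives $f_{\mathcal{F}}(\vec{p})=(1+o(1))pn/\log n$. This is exactly the map $f_1$ that the paper notes is a threshold map but not a faithful one.

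The underlying issue is that any map recording how much $\vec{p}$ must be rescaled (by scalars or by your powers $t$) to reach measure $1/2$ is blind to sharp thresholds. There, a bounded rescaling moves $\mu_{\vec{p}}(\mathcal{F})$ from $o(1)$ to $1-o(1)$. Faithfulness is a statement about $\mu_{\vec{p}}(\mathcal{F})$ itself, so the missing idea is to define $f_{\mathcal{F}}$ as a monotone reparametrization of $\mu_{\vec{p}}(\mathcal{F})$.

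The paper sets $f_{\mathcal{F}}(\vec{p})=\mu_{\vec{p}}(\mathcal{F})$ if $\mu_{\vec{p}}(\mathcal{F})\le 1/2$, and $f_{\mathcal{F}}(\vec{p})=(4(1-\mu_{\vec{p}}(\mathcal{F})))^{-1}$ otherwise. Then (a) and (b) are immediate, and $\varepsilon<f_{\mathcal{F}}(\vec{p})<\varepsilon^{-1}$ gives $\varepsilon/4<\mu_{\vec{p}}(\mathcal{F})<1-\varepsilon/4$. Continuity and monotonicity are inherited from the polynomial $\vec{p}\mapsto\mu_{\vec{p}}(\mathcal{F})$, so none of your Russo-formula or implicit-function machinery for continuity of $t^*$ is needed.
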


\proof 
Since $\mathcal{F}$ is an increasing family, $\mu_{\vec{p}}(\mathcal{F})$ is an increasing function of $\vec{p}$. Let  $f_{\mathcal{F}}:[0,1]^X\rightarrow\R_{\geq 0}$ be defined by
\begin{center}
    $f_{\mathcal{F}}(\vec{p}) = \begin{cases} \mu_{\vec{p}}(\mathcal{F}) &\text{ if } \mu_{\vec{p}}(\mathcal{F})\le 1/2\\ 
    (4(1-\mu_{\vec{p}}(\mathcal{F})))^{-1} &\text{ otherwise}\end{cases}$.
\end{center}
Obviously, $f_{\mathcal{F}}(\vec{p})$ is a continuous increasing function of $\mu_{\vec{p}}(\mathcal{F})$ and thus a continuous increasing function of $\vec{p}$. Moreover, $f_{\mathcal{F}}(\vec{p})$ satisfies all conditions (a,b,c) of Definition~\ref{def:threshold}.   \qed
\medskip

The construction of $f_{\mathcal{F}}$ is abstract and uses the function $\mu_{\vec{p}}({\mathcal F})$, which gives no information of $p_c({\mathcal F})$ other than its definition~\eqref{def:critical}. Hence, the significance of Theorem~\ref{theorem: Existence of Non-Uniform Threshold} lies in establishing the existence of a faithful threshold map, rather than in providing a method for constructing one.
In applications with given ${\mathcal F}$, we wish to find ``nice'' threshold maps that are as faithful as possible, which give estimations of members in $p_c({\mathcal F})$. For instance, in uniform setting $\G(n,p)$, and ${\mathcal F}={\mathcal F}_{\texttt{PM}}$, the map $f_1=pn/\log n$ is a threshold map, but not a faithful one, because the appearance of a perfect matching in $\G(n,p)$ has a sharp threshold. On the other hand, $f_2=\exp(pn-\log n-\log\log n)$ is a faithful threshold map.

However, finding ``nice'' $f_{\mathcal{F}}$ across the entire domain $[0,1]^X$ (as in Definition~\ref{def:threshold}) is often challenging, even for the simplest increasing families $\mathcal{F}$, due to the potentially complex shapes of $\vec{p}$ at thresholds. It is usually more feasible and informative to focus on a particular family ${\mathcal P}$ of probability vectors $\vec{p}$ and examine the threshold map within ${\mathcal P}$. Notable examples include the stochastic model and the Chung-Lu model (see Section~\ref{section: Random Graph Models} for further details), which have been extensively studied in the past. When working with a restricted domain ${\mathcal P}\subseteq [0,1]^X$, we naturally adapt the notions of threshold and threshold map by replacing $[0,1]^X$ with ${\mathcal P}$ in Definition~\ref{def:threshold}.

Our first contribution extends~\cite{frankston2019thresholds} in two directions. First, we develop the notion of \vocab{spread} for an increasing family $\mathcal{F}$ that is tailored to proving the 1-statement for $\mathcal{F}$ in $X_{\vec{p}}$. Our second extension considers $\mathcal{F}$ where every $S\in \mathcal{F}$ is forced to contain a subset of elements $F\subseteq X$. For example, consider $X_p$ being $\G(n,p)$ and $\mathcal{F}$ being the set of Hamiltonian graphs on $[n]$ that contain a particular set $F$ of edges. This setting allows many applications by patching smaller structures to form a desirable large structure. For instance, Frieze~\cite{HamiltonCyclePatching}  found Hamilton cycles by patching disjoint cycle covers or paths together using a set of specified edges. This technique has been used in many other works. Can the spread method be applied to find upper bounds on the threshold in this case? Our Theorem~\ref{Hamilton Spread Permutation Theorem} below aims to answer this question; refer to Section~\ref{section: Non-Uniform Spread} for the precise statement and the relevant definitions.

The second contribution of this paper is to test the strength of Theorem~\ref{Restatement of P-V Theorem 1} in capturing the threshold and the associated threshold map $f_{\mathcal{F}}$. Our approach relies on the notion of spread via Theorem~\ref{Analogue of Observation 7.5}, which provides a $1$-statement. This allows us to address questions such as: How powerful is this theorem? Does it capture the expectation threshold, and how accurately does it estimate the threshold for a given increasing family $\mathcal{F}$?
As a test case, we focus on the increasing family $\mathcal{F}=\mathcal{F}_{\texttt{PM}}$. In Section~\ref{section: Test Case with Perfect Matchings}, we confirm that Theorem~\ref{Analogue of Observation 7.5} indeed captures the threshold in a broad range of regimes. Nevertheless, it is worth emphasizing that even for this relatively simple example of $\mathcal{F}$, estimating $f_{\mathcal{F}}$
  over the full domain $[0,1]^{\binom{[n]}{2}}$, or even over more structured domains such as those arising from the stochastic block model or the Chung–Lu model, remains highly challenging, regardless of whether the spread method is applied.

We note that in some cases, applying Theorem~\ref{Hamilton Spread Permutation Theorem} together with the ``patching technique'' allows us to capture the thresholds,  when a direct application of Theorem~\ref{Analogue of Observation 7.5} introduces a logarithmic slack (this corresponds to the situations where the actual phase transition occurs close to the lower bound from the expectation threshold, rather than the Kahn-Kalai upper bound; see details of these two bounds in Section~\ref{section: Uniform Spread Intro}). Such cases do not arise in the study of $\mathcal{F}_{\texttt{PM}}$.
In a subsequent paper, we will extend this study to more difficult examples, including Hamilton cycles and spanning subgraphs of bounded degrees. For these examples, we will fully leverage the power of Theorems~\ref{Hamilton Spread Permutation Theorem} and~\ref{Analogue of Observation 7.5}.

Finally, we turn to ${\mathcal G}(n,\vec{d})$, the uniform random graph with prescribed degree sequence $\vec{d}=(d_1,\ldots,d_n)$.
This model is substantially more difficult to analyze than $X_{\vec{p}}$ with $X=K_n$, or other models such as the stochastic model or the Chung-lu model. Nevertheless, it is among the most important random graph models due to its wide range of applications.

 Suppose that $\vec{d}$ and $\vec{d}'$ are two degree sequences where $\vec{d}\le \vec{d}'$. Unlike $X_{\vec{p}}$, it is no longer always true that ${\mathcal G}(n,\vec{d})$ and ${\mathcal G}(n,\vec{d}')$ can be coupled so that ${\mathcal G}(n,\vec{d})$ is a subgraph of ${\mathcal G}(n,\vec{d}')$. As a result, thresholds (now in terms of $\vec{d}$ instead of $\vec{p}$) need not always exist for an increasing family $\F$.
Our last contribution in this paper is to bridge Kahn-Kalai with $\mathcal{G}(n,\vec{d})$. Because the edges do not appear independently in $\mathcal{G}(n,\vec{d})$, Theorem~\ref{Analogue of Observation 7.5} cannot be applied immediately to this model. However, we use a result from~\cite{Gao_2024} to embed $X_{\vec{p}}$ within $\mathcal{G}(n,\vec{d})$ for some properly chosen $\vec{p}$. This allows us to apply Theorem~\ref{Analogue of Observation 7.5} for $X_{\vec{p}}$ and obtain results for $\mathcal{G}(n,\vec{d})$; see section~\ref{section:G(n,d) results} for more details.

\subsection{Inhomogeneous random graph models}\label{section: Random Graph Models}

With $X=\binom{[n]}{2}$, $X_{\vec{p}}$ defines a random graph on $[n]$. However, it is more conventional in this case to denote the random graph by $\G(n,P)$, where $P\in [0,1]^{\binom{[n]}{2}}$. Although determining thresholds over the entire domain $[0,1]^{\binom{[n]}{2}}$
is highly complex, many interesting and tractable cases arise by restricting $P$ to some structured subfamilies $\P$. In this section, we highlight several well-studied random graph models corresponding to such subfamilies, and in the following sections we explore thresholds within ${\mathcal P}$.

We start with the stochastic block model in which $P$ respects a specified block structure. The stochastic block model appeared in~\cite{SBM_Intro}, and there are many research works relating to community analysis/detection in networks, such as in~\cite{SBM_First_Community_Detection} and~\cite{SBM_Second_Community_Detection}. 

\begin{definition*} Let $k\ge 1$ be a positive integer, $\bfU=(U_1,\ldots, U_k)$ be a set of $k$ pairwise disjoint finite sets, and $P\in [0,1]^{k\times k}$ be a symmetric matrix. 
The stochastic block model $\mathcal{B}(\bfU, P)$ consists of vertex set $U_1\cup\cdots\cup U_k$ in which two distinct vertices $u\in U_i$ and $v\in U_j$ are adjacent independently with probability $P_{ij} = P_{ji}$, for all $1\leq i,j\leq k$. Each $U_i$ is called a block of vertices in the model.
\end{definition*}

\begin{remark*}
\normalfont
In most works, the stochastic model is defined so that $k$ is fixed and is independent of the total number of vertices in the graph. We do not include that restriction in the definition, and hence $\G(n,P)$ can be viewed as a stochastic block model by letting $U_i=\{i\}$ for $i\in [n]$. However, in  most interesting examples that we investigate, $k$ will be fixed.
\end{remark*}

Next, we introduce the \vocab{Chung-Lu model} in which $P$ is ``guided'' by the so-called ``expected degree sequence''.

\begin{definition*}
Given a sequence $\vec{d} = (d_1,\ldots,d_n)$ of non-negative integers, called the expected degree sequence, the \vocab{Chung-Lu model} is given by $\mathcal{G}_{\text{CL}}(n,\vec{d}) := \mathcal{G}(n, P_{\vec{d}})$ where $P_{\vec{d}}$ is the edge probability matrix satisfying
\begin{center}
    $P_{\vec{d}}(i,j) = \min\left\{1, \displaystyle\frac{d_i d_j}{\norm{\vec{d}}_1}\cdot\mathbbm{1}\{i\neq j\}\right\}$
\end{center}
for all $1\leq i,j\leq n$.
\end{definition*}

\begin{remark*}
\normalfont
The Chung-Lu model can be viewed as a special case of the stochastic block model, where the vertices associated with the same expected degree are contained in the same block.
Without loss of generality, we may assume that $d_1\geq\cdots\geq d_n\geq 1$ (if $d_i=0$ then one can safely consider the equivalent model by removing vertex $i$). 
\end{remark*}

Finally, we define  $\mathcal{G}(n,\vec{d})$. Note that the probability measure of $\G(n,\vec{d})$ does not follow~(\ref{def:measure}) due to the edge dependency.
    
\begin{definition*}
Given a sequence $\vec{d} = (d_1,\ldots, d_n)$ of nonnegative integers, $\mathcal{G}(n,\vec{d})$ is a uniformly random graph where vertex $i$ has degree $d_i$ for every $1\le i\le n$.
\end{definition*}

\subsection{Test case: Perfect matchings}\label{section: Test Case with Perfect Matchings}

By extending the notion of spread in \cite{frankston2019thresholds}, Theorem~\ref{Analogue of Observation 7.5} (in Section~\ref{section: Non-Uniform Spread} below) offers a method to transform the problem of identifying thresholds in the non-uniform setting into a purely combinatorial problem. Due to technical terms and definitions, we defer the presentation of Theorem~\ref{Analogue of Observation 7.5} and related terms to Section~\ref{section: Non-Uniform Spread}.

In this section, we first discuss the applications of Theorem~\ref{Analogue of Observation 7.5}. As commented before, determining the threshold map $f_{\mathcal{F}}$ can be very difficult, and we may not even expect good analytic properties $f_{\mathcal{F}}$, even for the simplest examples of $\mathcal{F}$. To test the effectiveness of Theorem~\ref{Analogue of Observation 7.5}, this section focuses on one of the simplest increasing families consisting of graphs containing a perfect matching.

Recalling $\mathcal{B}(\bfU,P)$ from Section~\ref{section: Random Graph Models}, we aim to derive conditions on $(\vec{U},P)$ for which $\mathcal{B}(\vec{U},P)$ has a perfect matching a.a.s., as well as conditions where it does not. Given $\vec{U}$, let $\abs{\vec{U}}$ denote the number of parts in $\bfU$, i.e.\ the integer $k$ such that $\vec{U}=(U_1,\ldots,U_k)$. To state our result, we define a \vocab{graph spectrum} $\mathfrak{G}_{\texttt{PM}}^\alpha(\vec{U}, P)$ that is associated to $\mathcal{B}(\vec{U},P)$ as follows.

\begin{definition}
\label{defn: graph spectrum}
Given $\mathcal{B}(\vec{U},P)$, and  $\alpha\geq 0$, let $\mathfrak{G}_{\texttt{PM}}^\alpha(\vec{U}, P)$ be the graph with vertex set $\bigcup\limits_{j=1}^{\abs{\vec{U}}} U_j$ such that two vertices $u\in U_x$ and $v\in U_y$ are adjacent if and only if $P(x,y)\geq \alpha\log n/\max(n_x, n_y)$, where $n_j= \abs{U_j}$ for all $1\leq j\leq\abs{\vec{U}}$. 
\end{definition}

Our $1$-statement for perfect matchings in $\mathcal{B}(\vec{U},P)$ is associated with the existence of a perfect matching in $\mathfrak{G}_{\texttt{PM}}^\alpha(\vec{U}, P)$ for an appropriate $\alpha$.

\begin{theorem}
\label{theorem: General Stochastic Perfect Matching Improved}
If $\mathfrak{G}_{\texttt{PM}}^{\alpha} (\vec{U}, P)$ has a perfect matching for some $\alpha = \omega(1)$, then a.a.s.\  $\mathcal{B}(\vec{U},P)$ has a perfect matching. 
\end{theorem}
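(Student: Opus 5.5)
The plan is to apply the non-uniform spread criterion, Theorem~\ref{Analogue of Observation 7.5}, to $\F=\F_{\texttt{PM}}$. We build a probability measure on perfect matchings of $K_n$ that is supported on perfect matchings of the spectrum graph $\mathfrak{G}_{\texttt{PM}}^{\alpha}(\vec{U},P)$. We then show it is $\vec{q}$-spread, where the spread weight of a pair $uv$ with $u\in U_x$, $v\in U_y$ is $q_{uv}=C/\max(n_x,n_y)$ for an absolute constant $C$. By the definition of the spectrum graph, every edge $uv$ of it satisfies $P(x,y)\ge \alpha\log n/\max(n_x,n_y)=(\alpha/C)\,q_{uv}\log n$. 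Since $\alpha=\omega(1)$ and the matchings have size $n/2$, we get $\vec{p}\ge K\vec{q}\log(n/2)$ coordinatewise on the support for any fixed $K$. Theorem~\ref{Analogue of Observation 7.5} should then give the 1-statement, namely that a.a.s.\ $\mathcal{B}(\vec{U},P)$ contains a perfect matching.

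For the measure, fix a perfect matching $M$ of $\mathfrak{G}_{\texttt{PM}}^{\alpha}(\vec{U},P)$. Let $\sigma=(\sigma_1,\dots,\sigma_k)$ consist of independent uniform permutations, with $\sigma_j$ a permutation of $U_j$, and output $\sigma(M)$. Adjacency in the spectrum graph depends only on the blocks of the two endpoints. So $\sigma$ is an automorphism of the spectrum graph, and $\sigma(M)$ is again a perfect matching of it; hence the measure is supported on the right family.

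The spread estimate is the main step. Fix a set $S=\{e_1,\dots,e_s\}$ of pairs; we must bound $\pr(S\subseteq\sigma(M))$ by $\prod_{e\in S}q_e$. We may assume $S$ is a matching, since otherwise the probability is $0$. Reveal the edges one at a time. For $e_i=uv$ with $u\in U_x$, $v\in U_y$ and $n_y\ge n_x$, first condition on $\sigma_x^{-1}(u)$; this fixes its $M$-partner $w$, and we need $w\in U_y$. We also need $\sigma_y^{-1}(v)=w$. Conditionally on everything revealed so far, $\sigma_y^{-1}(v)$ is uniform over the positions of $U_y$ not yet used. If $t_y$ earlier edges of $S$ have consumed positions in $U_y$, this conditional probability is at most $1/(n_y-t_y)$. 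The case $x=y$ is handled the same way, and it consumes two positions of the block.

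Taking the product and grouping edges by the block that played the role of $U_y$, each block contributes at most $\prod_{i<m}(n_y-i)^{-1}=(n_y-m)!/n_y!\le (e/n_y)^m$, using $n!/(n-m)!\ge (n/e)^m$. When $x=y$ the indices advance by two, which costs at most a further constant factor per edge. This yields $\pr(S\subseteq\sigma(M))\le\prod_{uv\in S} C/\max(n_x,n_y)$, which is exactly the required spread.

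I expect this bookkeeping to be the only delicate point: one must always charge each edge to its larger block, and the ordering must keep the count of used positions in each block consistent. After that, it remains to check the exact hypotheses of Theorem~\ref{Analogue of Observation 7.5}, such as whether it needs $\vec{p}\ge K\vec{q}\log\ell$ only on pairs in the support, with $\ell$ the size of the structures. Pairs outside the spectrum graph get zero spread weight, so they impose no condition.
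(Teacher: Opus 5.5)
You use the same measure as the paper (a uniformly random block-invariant permutation applied to a fixed perfect matching $M$ of $\mathfrak{G}_{\texttt{PM}}^{\alpha}(\vec{U},P)$), the same weights $q_{uv}=C/\max(n_x,n_y)$ on spectrum pairs and $0$ elsewhere, and the same appeal to Theorem~\ref{Analogue of Observation 7.5}. The gap is in the spread estimate, exactly at the bookkeeping you deferred.

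\textbf{The per-block bound fails for an arbitrary order.} In your scheme the reveal of $\sigma_x^{-1}(u)$ is free, so only the larger endpoint of each edge pays. Your grouping step, ``each block contributes at most $\prod_{i<m}(n_y-i)^{-1}$'', tacitly assumes that when the $(i+1)$-st edge charged to $U_y$ is revealed, only about $i$ positions of $U_y$ are in use. But edges charged to other, larger blocks also consume positions of $U_y$. Here is a concrete case. Take two blocks with $n_1<n_2$, and let $M$ have exactly $n_1-2$ edges between $U_1$ and $U_2$. Let $S$ consist of $n_1-2$ such cross edges, revealed first, followed by one edge inside $U_1$. Conditionally on the cross edges succeeding, the last edge succeeds with probability $1$, not $O(1/n_1)$. (In this example the overall product bound still holds, because of slack on the cross edges, but showing that in general needs an amortized argument you have not given.)

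\textbf{Two ways to close it.} One option is to fix the order.
\begin{itemize}
\item List the blocks by nondecreasing size, charge each edge to the later of its two blocks, and reveal edges in increasing order of charged block.
\item An edge charged to an earlier block has both endpoints in earlier blocks. So while the edges charged to $U_y$ are being revealed, every used position of $U_y$ comes from those edges, at most two each.
\item A short computation then bounds the contribution of each block by $(C'/n_y)^m$.
\end{itemize}
The other option is to count directly, as the paper does.
\begin{itemize}
\item Let $s_{ij}$ be the number of edges of $S$ between $U_i$ and $U_j$, and $\zeta(i)$ the number of vertices of $S$ in $U_i$.
\item The number of block-invariant $\sigma$ with $\sigma(S)\subseteq M$ is at most $\prod_{i\le j}\min(n_i,n_j)^{s_{ij}}\prod_i (n_i-\zeta(i))!$.
\item Divide by $\prod_i n_i!$ and use $(n_i)_{\zeta(i)}\ge (n_i/e)^{\zeta(i)}$. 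Now \emph{both} endpoints of every edge contribute a factor $e/n_\cdot$, and the numerator's $\min(n_i,n_j)$ cancels the smaller one.
\item This leaves $e^2/\max(n_i,n_j)$ per edge, with no ordering needed.
\end{itemize}

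\textbf{The a.a.s.\ conclusion needs a boosting step.} Theorem~\ref{Analogue of Observation 7.5} only gives $\mu_{T_{n/2}(\vec{q})}(\F_M)\ge 1/2$, not an a.a.s.\ statement. You must add the boosting argument of Theorem~\ref{theorem: Non-Uniform Friedgut}, which takes a union of $\omega(1)$ independent copies. It applies because $T_{n/2}(\vec{q})\le (4\log n+7)\vec{q}$, so $\vec{p}\ge \omega(1)\,T_{n/2}(\vec{q})$ coordinatewise.
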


Note that $|\vec{U}|$ does not have to be fixed in the above theorem. Indeed, we will apply Theorem~\ref{theorem: General Stochastic Perfect Matching Improved} to the Chung-Lu model later in this section, where the number of blocks can be a function of $n$.

An ideal $0$-statement would be the following:

\begin{mdframed}[backgroundcolor=gray!30]
\textbf{Ideal 0-statement}: If  $\mathfrak{G}_{\texttt{PM}}^\alpha (\vec{U}, P)$ does not have a perfect matching for some $\alpha = o(1)$, then a.a.s.\ $\mathcal{B}(\bfU,P)$ does not have a perfect matching.
\end{mdframed}

If the above statement were true, then this demonstrates a phase transition because we can take $f_{\texttt{PM}}(\vec{U}, P)$ to be the maximum $\alpha$ for which $\mathfrak{G}_{\texttt{PM}}^{\alpha}(\vec{U}, P)$ has a perfect matching (noting that $\mathfrak{G}_{\texttt{PM}}^{\alpha_1}(\vec{U},P)\subseteq\mathfrak{G}_{\texttt{PM}}^{\alpha_2}(\vec{U},P)$ if $\alpha_1\geq\alpha_2$). If $\mathfrak{G}_{\texttt{PM}}^{\alpha}(\vec{U},P)$ has no perfect matching for some $\alpha = o(1)$, then $f_{\texttt{PM}}(\vec{U}, P)\ll 1$. Together with Theorem~\ref{theorem: General Stochastic Perfect Matching Improved}, this function $f_{\texttt{PM}}(U, P)$ is a threshold map. However, the ideal 0-statement above is false in general; we give a counterexample below. The proof is left as an exercise to the reader, as it follows closely the argument of Example~\ref{example: G(n,d) Ideal 0-statement Example}, whose proof we provide. 

\begin{example}
\label{example: SBM Ideal 0-statement Counterexample}
Consider $\vec{U} = (U_1, U_2)$ with $\abs{U_2} = \abs{U_1}+2$ and $P\in [0,1]^{n\times n}$ such that $P(x,y) = 1$ for all $x\in U_1$ or $y\in U_1$. There exists $\rho = o(\log n/n)$ such that if $P(x,y) = \rho$ for all $x,y\in U_2$, then $\mathcal{B}(\vec{U}, P)$ has a perfect matching with positive probability as $n\rightarrow\infty$. However, note that for some $\alpha = o(1)$, the graph $\mathfrak{G}_{\texttt{PM}}^{\alpha} (\vec{U}, P)$ has no edges between any two vertices in $U_2$, and therefore, has no perfect matching.
\end{example}

Since this $0$-statement is not true in general, it is natural to ask the following question:
\begin{problem}
\label{Perfect Matching $0$-statement}
Can we define (interesting) families of $(\vec{U},P)$ for which the ideal 0-statement is true?
\end{problem}

We will address Problem~\ref{Perfect Matching $0$-statement} for some families of $(\vec{U},P)$ arising from the Chung-Lu model. First, we focus on the Chung-Lu model where the expected degree sequence has a fixed number of values.

\begin{definition}
\label{defn: U and P for Chung-Lu}
We say a sequence $\vec{d} = (d_1,\ldots, d_n)$ is \vocab{$k$-valued} if there exist $k$ distinct real numbers $\alpha_1,\ldots,\alpha_k$ such that $d_i\in \{\alpha_1,\ldots,\alpha_k\}$ for all $1\le i\le n$.
\end{definition}

Without loss of generality we may assume that the components in a $k$-valued sequence $\vec{d}$ take exactly $k$ values, as otherwise, we may consider it a $k'$-valued sequence for some $k'<k$.
Let $U_i = \{v_j: d_j = \alpha_i\}$ and $n_i = \abs{U_i}$ for all $1\leq i\leq k$. 
Furthermore, let $\vec{U}_{\vec{d}} = \left(U_1,\ldots,U_k\right)$,  which is a partition of the vertices, and let $P_{\vec{d}}$ be the probability matrix inherited from $\mathcal{G}_{\text{CL}}(n, \vec{d})$. It is immediate that $\mathcal{G}_{\text{CL}}(n, \vec{d}) = \mathcal{B}(\vec{U}_{\vec{d}}, P_{\vec{d}})$. Our first focus is on bi-valued (i.e., $2$-valued) sequences $\vec{d} = (\underbrace{d_1,\ldots, d_1}_{n_1}, \underbrace{d_2,\ldots, d_2}_{n_2})$, with $d_1> d_2$. Define families $\mathcal{D}_1$ and $\mathcal{D}_2$ of bi-valued sequences as follows. 
\begin{itemize}
    \item $\vec{d}\in\mathcal{D}_1$ if $n_1\geq n_2 \ge n^{\Theta(1)}$;
    \item $\vec{d}\in\mathcal{D}_2$ if $n_1\leq n^{\delta}$ for some fixed $\delta < 1$.
\end{itemize}

We prove that the ideal 0-statement holds for bi-valued sequences $\vec{d}$ in $\mathcal{D}_1\cup\mathcal{D}_2$, and therefore, the graph spectrum $\mathfrak{G}_{\texttt{PM}}^\alpha(\vec{U}_{\vec{d}}, P_{\vec{d}})$ correctly captures the threshold for perfect matchings for these families of degree sequences.

\begin{theorem}
\label{theorem: Phase Transition for PM in Bi-Valued Families}
The ideal 0-statement holds in $\mathcal{G}_{\text{CL}}(n, \vec{d})$, for $\vec{d}\in\mathcal{D}_1\cup\mathcal{D}_2$.
\end{theorem}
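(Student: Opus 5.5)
We will show the contrapositive form directly: if $\mathfrak{G}_{\texttt{PM}}^{\alpha}(\vec{U}_{\vec{d}},P_{\vec{d}})$ has no perfect matching for some $\alpha=o(1)$, then we exhibit an obstruction in $\mathcal{G}_{\text{CL}}(n,\vec{d})$ that is present a.a.s. The obstruction will be either an isolated vertex or, more generally, a violation of Tutte's/Hall's condition caused by too many low-degree vertices that can only be matched into the high block. Write $W=\norm{\vec{d}}_1=n_1d_1+n_2d_2$. The entries of $P_{\vec d}$ are $p_{11}=\min(1,d_1^2/W)$, $p_{12}=\min(1,d_1d_2/W)$ and $p_{22}=\min(1,d_2^2/W)$. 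By Definition~\ref{defn: graph spectrum}, the spectrum graph is a blow-up with threshold $\alpha\log n/n_1$ inside $U_1$, $\alpha\log n/\max(n_1,n_2)$ across, and $\alpha\log n/n_2$ inside $U_2$. Since $d_1>d_2$, we have $p_{11}\ge p_{12}\ge p_{22}$. We first classify the ways the spectrum can fail to have a perfect matching: (i) it has no edges inside $U_2$ and no cross edges, so that $U_2$ is isolated; (ii) it has cross edges but none inside $U_2$, and $n_2>n_1$ (or a parity issue arises), so $U_2$ cannot be covered because it is an independent set larger than its neighbourhood; (iii) the analogous failures involving $U_1$ when $U_1$ has no internal edges. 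Because $p_{11}\ge p_{12}$ and the thresholds compare in a controlled way, only a few cases actually survive, and we handle each separately.

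\textbf{Family $\mathcal{D}_1$} ($n_1\ge n_2\ge n^{\Theta(1)}$). Here $\max(n_1,n_2)=n_1$. Failure of the spectrum forces the cross edge to be absent (when $n_1\ge n_2$, the presence of cross edges together with $U_1$-edges gives a perfect matching, up to parity). So $p_{12}\le\alpha\log n/n_1$ with $\alpha=o(1)$, and also $p_{22}\le p_{12}$. The expected degree of a vertex in $U_2$ is $\le n_1p_{12}+n_2p_{22}\le 2\alpha\log n=o(\log n)$. We then show a.a.s.\ some vertex of $U_2$ is isolated. The expected number of isolated vertices in $U_2$ is about $n_2\exp(-o(\log n))=n^{\Theta(1)-o(1)}\to\infty$, and a second moment computation gives concentration, since the isolation events are nearly independent (pairs share only one potential edge). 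This is where $n_2\ge n^{\Theta(1)}$ is used. A remaining subcase, in which $U_1$ has no internal edges, cannot occur: $p_{11}\ge p_{12}$ and the $U_1$ threshold $\alpha\log n/n_1$ equals the cross threshold.

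\textbf{Family $\mathcal{D}_2$} ($n_1\le n^\delta$, so $n_2=n-n_1$). If the spectrum has an edge inside $U_2$, then since $n_2$ is large the spectrum contains a perfect matching, after absorbing $U_1$ via $U_1$- or cross edges, which exist because $p_{11},p_{12}\ge p_{22}$ and the thresholds $\alpha\log n/n_1$ and $\alpha\log n/n_2$ compare favourably. So failure means $p_{22}<\alpha\log n/n_2$, and the obstruction is an excess of low-degree vertices in $U_2$. The expected degree of $v\in U_2$ is $n_2p_{22}+n_1p_{12}$. Now $n_2p_{22}=o(\log n)$, and we need to control $n_1p_{12}$. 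Suppose first that $n_1p_{12}=o(\log n)$. Then isolated vertices appear in $U_2$ as before, with $n_2\approx n$. Otherwise, the key quantity is the number of vertices in $U_2$ with no neighbour in $U_2$. This number is $\approx n_2e^{-o(\log n)}=n^{1-o(1)}\gg n^\delta\ge n_1$. These vertices span no edges among themselves a.a.s., or at least contain a large independent set by a first-moment estimate, and they can only be matched into $U_1$. Hence a.a.s.\ more than $n_1$ of them must be matched into $U_1$, which violates Hall's condition.

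The main obstacle is the case analysis of exactly when the spectrum lacks a perfect matching, including parity issues and the comparisons between $\alpha\log n/n_i$ and the $p_{ij}$ via $d_1>d_2$. The independence among the vertices in the $\mathcal{D}_2$ Hall argument also needs care. We would first carefully tabulate the three edge types of the blow-up and verify that every non-matchable configuration yields one of the two obstructions above.
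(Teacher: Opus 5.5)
Your proposal is correct and follows essentially the same route as the paper. You reduce to the single failure condition of the bi-valued spectrum (no cross edges in $\mathcal{D}_1$, no edges inside $U_2$ in $\mathcal{D}_2$), then run a second-moment count: of isolated vertices of $U_2$ for $\mathcal{D}_1$, and for $\mathcal{D}_2$ of vertices of $U_2$ with no neighbour in $U_2$, showing there are more than $n_1$ of them. Your subcase split in $\mathcal{D}_2$ is unnecessary, and the independence you flag as needing care is automatic: vertices of $U_2$ with no $U_2$-neighbour are pairwise non-adjacent by definition, so pigeonholing them into $U_1$ finishes the argument directly.
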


We give an example below which can be viewed as an extension of $\mathcal{D}_1$ from the bi-valued case to the $k$-valued case; noting that $k$  may depend on $n$. Refer to section~\ref{section: Proof of k-valued Chung-Lu} for its proof. 

\begin{example}
\label{example: 0-statement Chung-Lu k-valued}
 Let $\mathcal{D}$ be a family of $k$-valued sequences $\vec{d}$ such that $n_1\geq\frac{n}{2}$ and $n_k \ge n^{\delta}$ for some fixed $\delta>0$. Then, the ideal 0-statement holds in $\mathcal{G}_{\text{CL}}(n,\vec{d})$ for $\vec{d}\in\mathcal{D}$.    
\end{example}

In light of Theorem~\ref{theorem: Phase Transition for PM in Bi-Valued Families} and Example~\ref{example: 0-statement Chung-Lu k-valued}, we propose the following problem for characterising $k$-valued sequences for which the ideal 0-statement holds. The problem is already challenging even for fixed $k$.

\begin{problem} Let $k\ge 2$ be a fixed integer.
Characterize the $k$-valued sequences for which the ideal $0$-statement holds.
\end{problem}

\subsection{Between non-uniform Kahn-Kalai and \pdfmath{\mathcal{G}(n,\vec{d})}}
\label{section:G(n,d) results}

Theorem~\ref{theorem: General Stochastic Perfect Matching Improved} cannot be applied directly to guarantee the existence of a perfect matching in $\G(n,\vec{d})$, since the model $\G(n,\vec{d})$  does not fit into the stochastic block model framework; in particular, the edges no longer appear independently. In fact, $\G(n,\vec{d})$ is a much more challenging model to analyze, and much of the analysis depends heavily on enumeration techniques. Moreover, the Kahn–Kalai upper bound for the emergence of perfect matchings (and other structures), while tight in asymptotic order for $\G(n,p)$, may not be tight in $\G(n,\vec{d})$ any more. A simple illustration of that comes from the well-known result that the random $d$-regular graph has a perfect matching a.a.s.\ provided that $d\ge 3$.

This motivates our study of $\G(n,\vec{d})$. In particular, we are interested in understanding whether an analogue of Theorem~\ref{theorem: General Stochastic Perfect Matching Improved} holds for  $\G(n,\vec{d})$, after imposing some reasonable conditions on $\vec{d}$, and if so, whether it captures the threshold up to the correct asymptotic order. We investigate these questions in this section. 

Our first result is to confirm that an analogue of Theorem~\ref{theorem: General Stochastic Perfect Matching Improved} holds for  $\G(n,\vec{d})$ for a large family of degree sequences $\vec{d}$.  Owing to a set of reasons, some obvious and others technical (refer to Section~\ref{section: proof of G(n,d) 1-statement} for more details), we restrict our discussions to sequences $\vec{d} = (d_1,\ldots,d_n)$ (with $d_1\geq d_2\geq\cdots \ge d_n$) such that 
\begin{equation}\label{eqn: deg-condition}
    \norm{\vec{d}}_1=0\pmod{2},\quad d_1^2 = o(\norm{\vec{d}}_1),\quad \text{and}\quad d_n\gg\log n.
\end{equation}
Recall $\vec{U}_{\vec{d}}$, $P_{\vec{d}}$ that are defined below Definition~\ref{defn: U and P for Chung-Lu}, and $\mathcal{G}_{\texttt{PM}}^{\alpha}(\vec{U}_{\vec{d}}, P_{\vec{d}})$ in Definitions~\ref{defn: graph spectrum}.
\begin{theorem}
\label{theorem: G(n,d) 1-statement}
Suppose that $\vec{d}$ is a degree sequence satisfying~\eqref{eqn: deg-condition}. If $\mathfrak{G}_{\texttt{PM}}^{\alpha}(\vec{U}_{\vec{d}}, P_{\vec{d}})$ has a perfect matching for some $\alpha = \omega(1)$, then a.a.s.\ $\mathcal{G}(n,\vec{d})$ has a perfect matching. 
\end{theorem}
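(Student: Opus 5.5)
The plan is to reduce the statement to Theorem~\ref{theorem: General Stochastic Perfect Matching Improved} through a coupling that embeds an inhomogeneous random graph inside $\mathcal{G}(n,\vec{d})$. This is the result from~\cite{Gao_2024} referred to in the introduction. Under~\eqref{eqn: deg-condition}, and in particular $d_1^2=o(\norm{\vec{d}}_1)$ and $d_n\gg\log n$, that coupling result says the following. For a suitable $\eps=o(1)$ (with $\eps$ tending to zero slowly enough in terms of $\log n/d_n$), one can couple $G\sim\mathcal{G}(n,\vec{d})$ with $G'\sim\mathcal{G}(n,P')$ so that a.a.s.\ $G'\subseteq G$. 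Here
\[
P'(i,j)=(1-\eps)\frac{d_id_j}{\norm{\vec{d}}_1}.
\]
Since $d_1^2=o(\norm{\vec{d}}_1)$, all these entries are below $1$, so $P'=(1-\eps)P_{\vec{d}}$ entrywise and the minimum in the Chung--Lu definition is never active.

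Given the coupling, the argument runs in three steps.
\begin{enumerate}
\item Observe that $\mathcal{G}(n,P')=\mathcal{B}(\vec{U}_{\vec{d}},P')$, with the same block partition as $\mathcal{G}_{\text{CL}}(n,\vec{d})$.
\item Compare graph spectra. An edge $uv$ with $u\in U_x$, $v\in U_y$ lies in $\mathfrak{G}_{\texttt{PM}}^{\alpha}(\vec{U}_{\vec{d}},P_{\vec{d}})$ exactly when $P_{\vec{d}}(x,y)\ge \alpha\log n/\max(n_x,n_y)$. Then
\[
P'(x,y)\ge (1-\eps)\alpha\log n/\max(n_x,n_y),
\]
so $\mathfrak{G}_{\texttt{PM}}^{\alpha}(\vec{U}_{\vec{d}},P_{\vec{d}})\subseteq \mathfrak{G}_{\texttt{PM}}^{\alpha'}(\vec{U}_{\vec{d}},P')$ with $\alpha'=(1-\eps)\alpha=\omega(1)$. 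The perfect matching in the former is therefore also a perfect matching in the latter.
\item Apply Theorem~\ref{theorem: General Stochastic Perfect Matching Improved}. It shows that $\mathcal{B}(\vec{U}_{\vec{d}},P')$ a.a.s.\ has a perfect matching, and because $G'\subseteq G$ a.a.s., so does $\mathcal{G}(n,\vec{d})$. This uses that $\mathcal{F}_{\texttt{PM}}$ is increasing and that the two a.a.s.\ events can be intersected.
\end{enumerate}
The number of blocks may grow with $n$, which Theorem~\ref{theorem: General Stochastic Perfect Matching Improved} allows.

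The main obstacle is step~0: checking that the hypotheses of the embedding theorem from~\cite{Gao_2024} really follow from~\eqref{eqn: deg-condition}, and extracting the precise form of $P'$. I expect the embedding to need $d_n\gg\log n$ so that concentration holds at every vertex; this is the technical reason behind that condition. It may also need a lower-order correction to $P'$, of the form $p_{ij}=(1-\eps)d_id_j/(\norm{\vec{d}}_1-\ldots)$ or $1-\exp(-\ldots)$.

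If the coupling instead produces some $P'$ with $P'(i,j)\ge c\,d_id_j/\norm{\vec{d}}_1$ for a constant $c>0$, the argument is unchanged: $\alpha'=c\alpha$ is still $\omega(1)$. So the plan is robust to constant-factor losses, and only a loss of a factor $\omega(1)$ would break it. If some pairs have $d_id_j/\norm{\vec{d}}_1$ too small for the coupling to control, I would note that such edges cannot lie in the spectrum anyway. This is because $\alpha\log n/\max(n_x,n_y)\ge\alpha\log n/n$, so only pairs with $d_id_j/\norm{\vec{d}}_1\gg \log n/n$ are ever needed.
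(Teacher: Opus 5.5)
Your proposal is correct and is essentially the paper's proof. It invokes the Gao--Ohapkin coupling, which the paper states in exactly the form you guessed: $G_L\sim\mathcal{G}(n,(1-\varepsilon)P(\vec{d}))$ with $\varepsilon=o(1)$, under $d_1^2=o(\|\vec{d}\|_1)$ and $d_n\gg\log n$. You then transfer the spectrum's perfect matching with a rescaled $\alpha'=\omega(1)$ and apply Theorem~\ref{theorem: General Stochastic Perfect Matching Improved}.

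The paper instead detours through $\mathcal{G}_{\text{CL}}(n,\vec{d}')$ with $\vec{d}'=\frac{1-\varepsilon}{2}\vec{d}$ and writes $\alpha'=2\alpha/(1-\varepsilon)$, whereas the value that makes its spectrum identity hold is $(1-\varepsilon)\alpha/2$. This slip is harmless since both are $\omega(1)$, and your direct use of $(1-\varepsilon)P_{\vec{d}}$ with $\alpha'=(1-\varepsilon)\alpha$ is the cleaner version of the same argument.
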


Next 
we investigate whether Theorem~\ref{theorem: G(n,d) 1-statement} is tight for degree sequences satisfying~\eqref{eqn: deg-condition}; i.e.\ whether the ideal 0-statement (restricted to degree sequences satisfying~\eqref{eqn: deg-condition}) holds. It is not a surprise that this is not the case. We give a negative example below, which is an analog of Example~\ref{example: SBM Ideal 0-statement Counterexample} (refer to section~\ref{section: Proof of G(n,d) Ideal 0-statement Counterexample} for the proof). 

\begin{example}
\label{example: G(n,d) Ideal 0-statement Counterexample}
Let $\vec{d}$ be bi-valued such that $n_2 = n_1 + 2$, $d_1 = \sqrt{n}$, and $d_2 = n^{1/8}\sqrt{\log n}$. Then $\mathfrak{G}_{\texttt{PM}}^{\alpha}(\vec{U}_{\vec{d}}, P_{\vec{d}})$ has no perfect matching for some $\alpha = o(1)$, but a.a.s.\ $\mathcal{G}(n,\vec{d})$ has a perfect matching. 
   
\end{example}

We naturally come to the next question: are there sub-families of degree sequences satisfying~\eqref{eqn: deg-condition} such that the ideal 0-statement holds? Our example below demonstrates that such sub-families exist (refer to section~\ref{section: Proof of G(n,d) concrete threshold} for the proof). 

\begin{example}
\label{example: G(n,d) Ideal 0-statement Example}
Let $\vec{d}$ be bi-valued such that $d_1 = n^{1/8}$ and $n_1 = n^{15/16}$. Then
\begin{center}
    $\lim\limits_{n\rightarrow\infty}\prob{\mathcal{G}(n,\vec{d}) \text{ has a perfect matching}} = \begin{cases} 0 &\text{, if } d_2\ll n^{1/32}\sqrt{\log n}\\ 1 &\text{, if } d_2\gg n^{1/32}\sqrt{\log n}\end{cases}$.
\end{center}
\end{example}

\begin{remark*}
The phase transition in Example~\ref{example: G(n,d) Ideal 0-statement Example}, expressed in terms of $d_2$, corresponds precisely to the dichotomy between the case where $\mathfrak{G}_{\texttt{PM}}^\alpha(\vec{U}_{\vec{d}},P_{\vec{d}})$ has a perfect matching for some $\alpha = \omega(1)$ (1-statement) and the case where it does not have a perfect matching for some $\alpha = o(1)$ (0-statement).
\end{remark*}

In the following theorem, we broaden the family of degree sequences in the above example. 

\begin{theorem}
\label{theorem: G(n,d) bi-valued 0-statement}
      Let $\mathcal{D}$ be the family of bi-valued $\vec{d}$  with $\norm{\vec{d}}_1=0\pmod{2}$ and the following properties:
      \begin{enumerate}[label=(A\arabic*), leftmargin=3cm]
          \item $d_2^2 = o(n_1)$ \label{cond: d_2 and n_1 bound}
          \item $d_1^2 = o(\sqrt{n d_2})$ \label{cond: stronger d_1 bound}
          \item $d_2\gg\log n$ \label{cond: d_2 lower bound embedding}
          \item $n_2^2 d_2^3 = o(\norm{\vec{d}}_1^2)$ \label{cond: n_2 and d_2 bound with norm}
          \item $n_1\leq n^\delta$ for some fixed $\delta < 1$ \label{cond: n_1 small}
      \end{enumerate}
      Then, both the 1-statement and ideal 0-statement hold for $\mathcal{D}$.
\end{theorem}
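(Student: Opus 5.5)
Both statements reduce to what the graph spectrum $\mathfrak{G}_{\texttt{PM}}^\alpha(\vec{U}_{\vec{d}},P_{\vec{d}})$ looks like for $\vec{d}\in\mathcal{D}$. By \ref{cond: n_1 small} we have $n_2=n-n_1\sim n$. Write $m=\norm{\vec{d}}_1=n_1d_1+n_2d_2$. The three possible edge types in the spectrum are then:
\begin{itemize}
\item $U_1$--$U_1$ edges, present iff $d_1^2/m\ge \alpha\log n/n_1$;
\item $U_1$--$U_2$ edges, present iff $d_1d_2/m\ge \alpha\log n/n_2$;
\item $U_2$--$U_2$ edges, present iff $d_2^2/m\ge\alpha\log n/n_2$.
\end{itemize}

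\emph{Structural claim.} The spectrum has a perfect matching if and only if it has $U_2$--$U_2$ edges, i.e.\ iff $n_2d_2^2/m\ge \alpha\log n$. For necessity, note $|U_2|>|U_1|$ for large $n$, so at least $n_2-n_1>0$ vertices of $U_2$ must be matched inside $U_2$. For sufficiency, $d_1>d_2$ gives $d_1d_2/m\ge d_2^2/m$, so $U_2$--$U_2$ edges force $U_1$--$U_2$ edges. We then match $U_1$ injectively into $U_2$ and match the remaining $n_2-n_1$ vertices of $U_2$ inside the clique on $U_2$. The number $n_2-n_1$ is even because $n_1d_1+n_2d_2$ is even and we may assume $n$ even.

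\emph{The 1-statement.} It suffices to check that $\mathcal{D}$ satisfies \eqref{eqn: deg-condition} and then invoke Theorem~\ref{theorem: G(n,d) 1-statement}. We have $d_n=d_2\gg\log n$ by \ref{cond: d_2 lower bound embedding}. Also $d_1^2=o(\sqrt{nd_2})$ by \ref{cond: stronger d_1 bound}, and $\sqrt{nd_2}\le n d_2\le 2n_2d_2\le 2m$, so $d_1^2=o(m)$. Parity is assumed in the definition of $\mathcal{D}$.

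\emph{The ideal 0-statement.} Suppose the spectrum has no perfect matching for some $\alpha=o(1)$. By the structural claim, $n_2d_2^2/m<\alpha\log n$. Call $v\in U_2$ \emph{bad} if all $d_2$ of its neighbours lie in $U_1$. In any perfect matching each bad vertex must be matched into $U_1$, so it suffices to show that a.a.s.\ $\mathcal{G}(n,\vec{d})$ has more than $n_1$ bad vertices.

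\emph{First moment.} Heuristically, each edge at $v$ lands in $U_1$ with probability about $q=n_1d_1/m$. Then
\[
\Pr(v \text{ bad})\approx q^{d_2}=\Bigl(1-\tfrac{n_2d_2}{m}\Bigr)^{d_2}\approx \exp\Bigl(-\tfrac{n_2d_2^2}{m}\Bigr)\ge n^{-\alpha}.
\]
Hence the expected number of bad vertices is at least $n_2n^{-\alpha}=n^{1-o(1)}\gg n^{\delta}\ge n_1$, using \ref{cond: n_1 small}.

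\emph{Making it rigorous.} The main obstacle is to justify this estimate and to prove concentration in $\mathcal{G}(n,\vec{d})$, where edges are dependent. I would use the switching method, or known asymptotic formulas for the probability that a prescribed set of edges is present in $\mathcal{G}(n,\vec{d})$ under $d_1^2=o(\sqrt{m})$-type conditions. The goal is to show that the probability that a fixed $v\in U_2$ has its neighbourhood equal to a fixed $d_2$-subset $S\subseteq U_1$ is $(1+o(1))\prod_{u\in S}d_2d_u/m$ times an error factor $e^{O(\cdot)}$. Summing over $S$ then gives $q^{d_2}e^{o(\log n)}$. The errors involve terms like $d_2^2/n_1$, $d_1^2/\sqrt{nd_2}$ and $n_2^2d_2^3/m^2$; conditions \ref{cond: d_2 and n_1 bound}, \ref{cond: stronger d_1 bound} and \ref{cond: n_2 and d_2 bound with norm} are exactly what keep these $o(1)$ or at most $o(\log n)$. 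This matters because repeated choices inside $U_1$ of size $n_1$ must not distort the count.

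\emph{Second moment.} I would compute the same joint estimate for pairs $v,w\in U_2$, whose neighbourhoods must now both lie in $U_1$. This should give $\mathbb{E}[B(B-1)]=(1+o(1))(\mathbb{E}B)^2$, where $B$ is the number of bad vertices, again using \ref{cond: d_2 and n_1 bound} to control overlaps inside $U_1$. Chebyshev's inequality then gives $B\ge \tfrac12\mathbb{E}B>n_1$ a.a.s., so a.a.s.\ $\mathcal{G}(n,\vec{d})$ has no perfect matching.
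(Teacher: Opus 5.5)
Your outline matches the paper's. The 1-statement comes from Theorem~\ref{theorem: G(n,d) 1-statement} once you check $d_1^2=o(\sqrt{nd_2})=o(\norm{\vec{d}}_1)$. Your structural claim is Lemma~\ref{lemma: bi-valued PM equivalence} with $x=2$. The 0-statement is proved by showing, via first and second moments and Chebyshev, that more than $n_1$ vertices of $U_2$ have no neighbour in $U_2$. (A small point: $n_2-n_1=n-2n_1$ is even because $n$ is even, not because of the parity of $\norm{\vec{d}}_1$.)

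However, the part that actually requires work in $\mathcal{G}(n,\vec{d})$ is only announced, and the one concrete estimate you commit to is false. You aim to show $\Pr(N(v)=S)\approx\prod_{u\in S}d_2d_u/\norm{\vec{d}}_1=(d_1d_2/\norm{\vec{d}}_1)^{d_2}$, up to errors of size $o(1)$ or $o(\log n)$. That is the Chung--Lu probability that the $d_2$ edges from $v$ to $S$ are present. It omits the requirement that $v$ has no further neighbours: heuristically a factor $\prod_{u\notin S}(1-d_2d_u/\norm{\vec{d}}_1)\approx e^{-d_2}$, or exactly the falling factorial $(d_2)_{d_2}=d_2!$ in place of $d_2^{d_2}$. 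The true value is $\Pr(N(v)=S)\sim d_2!\,d_1^{d_2}/\norm{\vec{d}}_1^{d_2}$. This is smaller than your formula by $d_2^{d_2}/d_2!=e^{(1+o(1))d_2}$, which is $n^{\omega(1)}$ by \ref{cond: d_2 lower bound embedding}. Summing your formula over the $\binom{n_1}{d_2}\sim n_1^{d_2}/d_2!$ sets gives $q^{d_2}e^{(1+o(1))d_2}$, not $q^{d_2}e^{o(\log n)}$. So this step cannot be carried out as planned, and none of the error terms you list explains the discrepancy.

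The missing ingredient is an exact evaluation of $\Pr(N(v)=Y)$. The paper uses a bijection: graphs with degree sequence $\vec{d}$ and $N(v)=Y$ correspond to graphs on $[n]\setminus\{v\}$ with degree sequence $\vec{d}^{v,Y}$, obtained by lowering each degree in $Y$ by one. Hence $\Pr(N(v)=Y)=D(\vec{d}^{v,Y})/D(\vec{d})$.

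Both counts are then evaluated with McKay's formula (Theorem~\ref{theorem: Mckay's Deg Seq Enumeration Result}). It applies because $\hat\Delta^2=O(d_1^4)=o(\norm{\vec{d}}_1)$ by \ref{cond: stronger d_1 bound}. The ratio is $\theta\gamma e^{\varphi+o(1)}$, where $\theta=d_1^{d_2}d_2!$ (this is where $d_2!$ enters) and $\gamma\sim\norm{\vec{d}}_1^{-d_2}$.

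What remains is Claim~\ref{claim: first moment exp vanishes}: the change $\varphi$ in $\lambda+\lambda^2$ is $o(1)$. This is not automatic, since $\lambda\ge(d_2-1)/2\to\infty$, and the paper handles it in Lemma~\ref{lemma: G(n,d) 0-statement Intermediate Lemma}.

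For the second moment the same bijection with $\vec{d}^{u,v,X,Y}$ gives $\theta_w=d_1^{2d_2-w}(d_1-1)^w(d_2!)^2$. An overlap $|X\cap Y|=w$ therefore contributes only a factor $((d_1-1)/d_1)^w\le 1$. Vandermonde's identity then yields $\mathbb{E}X(X-1)\le(1+o(1))(\mathbb{E}X)^2$, once each $\varphi_w=o(1)$.

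A switching argument could replace McKay's formula. It would still have to produce the $d_2!$ factor and control this $\lambda$-correction, and your proposal does neither yet.
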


\begin{remark}
\normalfont
\label{remark: G(n,d) remark}
Note that condition~\ref{cond: stronger d_1 bound} implies $d_1^2 = o(\norm{\vec{d}}_1)$ since $nd_2\leq\norm{\vec{d}}_1$. Therefore, $\mathcal{F}$ indeed only contains degree sequences satisfying~\eqref{eqn: deg-condition}. Furthermore, we do not need to assume $d_2\gg\log n$ for the ideal 0-statement; it is only a technical condition used for the 1-statement (i.e., Theorem~\ref{theorem: G(n,d) 1-statement}).
\end{remark}

Finally, we propose the following problem.

\begin{problem} 
Characterize the degree sequences $\vec{d}$ for which Theorem~\ref{theorem: G(n,d) 1-statement} is tight.
\end{problem}

\subsection{Threshold, integer program, and spread}
\label{section: Uniform Spread Intro}

Recall from the earlier introduction that the Park-Pham theorem (i.e.\ the Kahn-Kalai conjecture) asserts that $p_c(\mathcal{F})$ is bounded between $q(\mathcal{F})$, the so-called expectation-threshold, and $q(\mathcal{F})\log |X|$. Now we formally define $q(\mathcal{F})$ in the uniform setting, and we will generalize this notion to the non-uniform setting in Section~\ref{section: Non-Uniform Spread}. We try to keep the notation consistent with~\cite{park2023proofkahnkalaiconjecture}.

\begin{definition*}
We say $\mathcal{F}$ is \vocab{$q$-small} if there exists $\mathcal{G}\subseteq 2^X$ such that
\begin{center}
    $\mathcal{F}\subseteq\gen{\mathcal{G}}:= \bigcup\limits_{S\in\mathcal{G}} \{T: T\supseteq S\}$
\end{center}
and $\sum\limits_{S\in\mathcal{G}} q^{\abs{S}}\leq \frac{1}{2}$. The \vocab{expectation-threshold} of $\mathcal{F}$, $q(\mathcal{F})$, is the maximum $q$ for which $\mathcal{F}$ is $q$-small.
\end{definition*}

If $\mathcal{F}\subseteq\gen{\mathcal{G}}$, then we say that $\mathcal{G}$ is a \vocab{cover} for $\mathcal{F}$. Finding $q(\mathcal{F})$ can be expressed as an integer program in the following manner: 
\begin{alignat*}{7}
    V(\mathcal{F},q) 
    = \min& &&\displaystyle\sum\limits_{S\subseteq X} g(S) q^{\abs{S}}
    \\
    \text{subject to}&\quad &&\sum\limits_{S\subseteq T} g(S)\geq 1 &&\hspace{3mm}&&\forall\hspace{1mm}  T\in\mathcal{F}
    \\
    &&& g(S)\in\{0,1\} &&\hspace{3mm}&&\forall\hspace{1mm} S\subseteq X.
\end{alignat*}
It follows that $q(\mathcal{F})\leq q$ provided that $V(\mathcal{F}, q)\geq 1/2$. 
We refer the reader to~\cite{perkins2024searchingsharpthresholdsrandom} for a more detailed exposition of it.

Recalling that $p_c(\mathcal{F})$ is the unique $p$ for which $\mu_p(\mathcal{F}) = 1/2$ and that $\mu_p(\mathcal{F})$ is increasing in $p$, we can restate the main result of~\cite{park2023proofkahnkalaiconjecture} as follows.
 
\begin{theorem}[Park-Pham~\cite{park2023proofkahnkalaiconjecture}]
\label{Uniform Kahn-Kalai Theorem}
There exists a universal constant $K$ such that for every finite set $X$ and an increasing family $\mathcal{F}\subseteq 2^X$, if $q\in [0,1]$ satisfies $V(\mathcal{F},q)\geq 1/2$, then $\mu_p(\mathcal{F})\leq Kq\log\ell(\mathcal{F})$. 
\end{theorem}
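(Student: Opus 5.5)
Since this is the main theorem of~\cite{park2023proofkahnkalaiconjecture}, I would reproduce its ``minimum fragment'' argument. The statement is best read as $p_c(\mathcal{F})\le Kq\log\ell(\mathcal{F})$, i.e.\ $\mu_p(\mathcal{F})\ge 1/2$ at $p=Kq\log\ell(\mathcal{F})$. I argue by contraposition: assume $\mu_p(\mathcal{F})<1/2$ at $p=Kq\log\ell$, where $\ell=\ell(\mathcal{F})$ is the maximum size of a minimal element of $\mathcal{F}$. From this I build a cover $\mathcal{G}$ with $\sum_{S\in\mathcal{G}}q^{|S|}<1/2$, contradicting $V(\mathcal{F},q)\ge 1/2$. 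We may assume $\mathcal{F}$ is generated by its minimal elements, all of size at most $\ell$. By the usual equivalence between $\mu_p$ and the uniform $m$-subset model with $m\approx pN$ (monotonicity plus concentration), it suffices to work with a uniformly random set $W$ of size $m$.

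\textbf{Rounds.} Write $W=W_1\cup\dots\cup W_L$ with $L\approx\log_2\ell$, each $W_i$ uniform of size $w\approx KqN$ on the remaining ground set. Maintain a current family, initially $\mathcal{F}$. In round $i$, for each current set $S$ define its \emph{minimum fragment}
\[
T(S,W_i)=S'\setminus W_i,
\]
where $S'$ ranges over current sets with $S'\subseteq S\cup W_i$ and is chosen to minimise $|S'\setminus W_i|$. Call $S$ \emph{good} in round $i$ if $|T(S,W_i)|\le |S|/2$. Good sets are replaced by their fragments and passed to the next round. Bad sets have their fragments placed into $\mathcal{G}$. After $L$ rounds every surviving fragment has size $<1$, so it is empty. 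An empty fragment means some member of $\mathcal{F}$ lies inside $W$.

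\textbf{Cover and key estimate.} Every $S\in\mathcal{F}$ either reaches the empty fragment, or at some round it contains (together with the $W_j$) a fragment placed in $\mathcal{G}$. Hence, when $W\notin\mathcal{F}$, the family $\mathcal{G}$ together with the fragments produced along the way covers $\mathcal{F}$. The main step is to bound
\[
\mathbb{E}\sum_{T\in\mathcal{G}_i}q^{|T|}
\]
for each round $i$, where $\mathcal{G}_i$ collects the fragments of size $t$ that are large relative to their source. I would use the Park--Pham counting. Each pair $(W_i,T)$ with $T$ a bad minimum fragment of size $t$ is encoded by the set $Z=W_i\cup T$, of size $w+t$, together with a subset of a fixed minimal member of $Z$, which costs at most $2^{\ell'}$ choices with $\ell'$ the current size bound. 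Minimality of the fragment makes this map injective in the relevant sense. This gives
\[
\#\{(W_i,T)\}\le\binom{N}{w+t}2^{O(t)},
\]
and dividing by $\binom{N}{w}\approx (N/w)^{t}\binom{N}{w+t}$ yields
\[
\mathbb{E}\sum_{T\in\mathcal{G}_i,\ |T|=t}q^{|T|}\le (C qN/w)^t=(C/K)^t .
\]
Summing over $t\ge1$ and over the $L$ rounds, the total expected cost is at most $L\cdot O(1/K)$, after the per-round weights are tuned so the factor $L=\log\ell$ is absorbed by $p$ carrying that same $\log\ell$. By Markov, this is small with probability $>1/2$. Intersecting with the event $W\notin\mathcal{F}$, which has probability $>1/2$, gives a realisation of $W$ in which $\mathcal{G}$ is a cover of cost $<1/2$.

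\textbf{Main obstacle.} The delicate point is the encoding: showing that the bad fragment $T$ is recoverable from $W_i\cup T$ with only $2^{O(|T|)}$ extra information. This needs the fact that minimum fragments are determined by a canonical choice of minimal set inside $W_i\cup S$. Handling the requirement $|T|>|S|/2$ in the ``bad'' case, which lets us pay $2^{|S|}\le 4^{|T|}$, is also essential. I would follow the Park--Pham choice of a fixed ordering on $\mathcal{F}$, taking the first minimiser, so that decoding is deterministic. Once this is in place, the rest is bookkeeping of constants to get a universal $K$.
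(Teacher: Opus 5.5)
The paper gives no proof of this statement. It quotes the theorem from Park--Pham, and the displayed conclusion is a misprint for $p_c(\mathcal{F})\le Kq\log\ell(\mathcal{F})$, which you read correctly. Your outline is the Park--Pham minimum-fragment argument: rounds, the encoding $Z=W_i\cup T$, and Markov plus the transfer to the $m$-subset model. However, the good/bad criterion you chose breaks the key estimate.

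\textbf{The encoding cost is not absorbed.} You call $S$ bad when $|T(S,W_i)|>|S|/2$ and say this lets you pay $2^{|S|}\le 4^{|T|}$. But the decoder never sees $S$. From $Z$ it takes the first member $S''$ of the current family $\mathcal{H}_i$ with $S''\subseteq Z$. Minimality only gives $S''\setminus W_i=T$, since $S''\subseteq W_i\cup T\subseteq S\cup W_i$ and $S''\setminus W_i\subseteq T$.

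In general $S\not\subseteq Z$. For example, take $S\cap W_i=\emptyset$ and let $T$ be a proper subset of $S$ with $|T|>|S|/2$, coming from a large $S'$ lying mostly inside $W_i$. Then $|S''|$ can be as large as the current bound $\ell_i$ while $t=|T|$ is small. The decoding cost $\binom{|S''|}{t}$ is then not $2^{O(t)}$; your own phrase ``$2^{\ell'}$ with $\ell'$ the current size bound'' shows the mismatch.

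The fix is to make the criterion uniform, as in Park--Pham: $S$ is bad iff $|T(S,W_i)|>\ell_i/2$. Then the surviving fragments form an $\ell_{i+1}=\lfloor\ell_i/2\rfloor$-bounded family. A bad $T$ has $2t>\ell_i$, so $2^{\ell_i}\le 4^t$.

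\textbf{The sum over rounds needs the same fix.} With your per-set criterion, bad fragments of every size $t\ge 1$ can occur in every round. Your estimate then gives only $O(1/K)$ per round and $O(L/K)$ overall. No ``tuning'' absorbs the factor $L$: making the total small forces $K\gtrsim\log\ell$, which yields only $p=O(q\log^2\ell)$.

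With the uniform criterion and $w=KqN$, $K\ge 8$, the round-$i$ cost is at most $\sum_{t\ge \ell_{i+1}+1}(4/K)^t\le 2(4/K)^{\ell_{i+1}+1}$. The exponents $\ell_{i+1}+1$ are distinct positive integers, so the total over all rounds is $O(1/K)$. The $\log\ell$ enters only through $|W|=Lw$.

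\textbf{Two smaller corrections.}
\begin{enumerate}
\item The ratio goes the other way: $\binom{N}{w+t}/\binom{N}{w}\le (N/w)^t$. Your stated approximation $\binom{N}{w}\approx (N/w)^t\binom{N}{w+t}$ would give $(Cqw/N)^t$, not the $(CqN/w)^t$ you wrote.
\item The cover must consist of the bad fragments alone. On $\{W\notin\mathcal{F}\}$ they cover $\mathcal{F}$ because $T(S,W_i)\subseteq S$ (as $S'\subseteq S\cup W_i$). So each $S\in\mathcal{F}$ itself contains the bad fragment ending its chain, not merely ``together with the $W_j$''. Adding the good fragments ``produced along the way'' would destroy the cost bound.
\end{enumerate}
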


While Theorem~\ref{Uniform Kahn-Kalai Theorem} beautifully describes the relation between $p_c({\mathcal F})$ and $q({\mathcal F})$, it is still very hard to find good bounds, especially good upper bounds, on $q({\mathcal F})$, for a given ${\mathcal F}$. A powerful tool called the spread method was introduced in~\cite{frankston2019thresholds} to bound $q({\mathcal F})$ from above, by investigating the fractional version of  $q({\mathcal F})$, which was first introduced in~\cite{TalagrandLPDuality}. We demonstrate this approach below.

We start from the linear relaxation of $V({\mathcal F},q)$ given below:
\begin{alignat*}{7}
    V_f(\mathcal{F},q) 
    = \min& &&\displaystyle\sum\limits_{S\subseteq X} g(S) q^{\abs{S}}
    \\
    \text{subject to}&\quad &&\sum\limits_{S\subseteq T} g(S)\geq 1 &&\hspace{3mm}&&\forall\hspace{1mm}  T\in\mathcal{F}
    \\
    &&& g(S)\geq 0 &&\hspace{3mm}&&\forall\hspace{1mm} S\subseteq X.
\end{alignat*}

This motivates the factional version of expectation-threshold given as follows.
\begin{definition*}
We say $\mathcal{F}$ is \vocab{weakly $q$-small} if there exists a function $g$ from $2^X$ to the non-negative reals $\R_{\geq 0}$ such that
\begin{center}
    $\displaystyle\sum\limits_{S\subseteq T} g(S)\geq 1$
\end{center}
for all $T\in\mathcal{F}$ and $\sum\limits_{S\subseteq X} g(S) q^{\abs{S}}\leq \frac{1}{2}$. The \vocab{fractional expectation-threshold} of $\mathcal{F}$, $q_f(\mathcal{F})$, is the maximum $q$ for which $\mathcal{F}$ is weakly $q$-small.
\end{definition*}

Obviously, $V({\mathcal F},q)\ge V_f({\mathcal F},q)$, and therefore if $V_f(\mathcal{F},q)\geq 1/2$, then $q_f(\mathcal{F})\leq q$. So, the upper bound of Theorem~\ref{Uniform Kahn-Kalai Theorem} immediately implies the following weaker version. (Indeed, this weaker version was proved before Theorem~\ref{Uniform Kahn-Kalai Theorem}.)

\begin{theorem}[Frankston et al~\cite{frankston2019thresholds}]
\label{Uniform Fractional Kahn-Kalai Theorem}
There exists a universal constant $K$ such that for every finite set $X$ and an increasing family $\mathcal{F}\subseteq 2^X$, if $q\in [0,1]$ satisfies $V_f(\mathcal{F},q)\geq 1/2$, then $\mu_p(\mathcal{F})\leq Kq\log\ell(\mathcal{F})$.
\end{theorem}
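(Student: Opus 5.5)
The plan is to derive this directly from Theorem~\ref{Uniform Kahn-Kalai Theorem}, using only that $V_f(\mathcal{F},q)$ is a relaxation of $V(\mathcal{F},q)$. I read the conclusion in the same sense as in Theorem~\ref{Uniform Kahn-Kalai Theorem}: the threshold $p_c(\mathcal{F})$, i.e.\ the value of $p$ with $\mu_p(\mathcal{F})=1/2$, is at most $Kq\log\ell(\mathcal{F})$. Equivalently, $\mu_p(\mathcal{F})\ge 1/2$ once $p\ge Kq\log\ell(\mathcal{F})$, because $\mu_p(\mathcal{F})$ is increasing in $p$.

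The first step is the comparison $V(\mathcal{F},q)\ge V_f(\mathcal{F},q)$ for every $q\in[0,1]$. The two programs have the same objective $\sum_S g(S)q^{|S|}$ and the same covering constraints $\sum_{S\subseteq T} g(S)\ge 1$ for all $T\in\mathcal{F}$. Every $\{0,1\}$-valued feasible $g$ is therefore feasible for the linear relaxation, where only $g\ge 0$ is required. So the relaxed minimum is taken over a larger set and can only be smaller.

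Next, suppose $V_f(\mathcal{F},q)\ge 1/2$. The first step gives $V(\mathcal{F},q)\ge 1/2$. This is exactly the hypothesis of Theorem~\ref{Uniform Kahn-Kalai Theorem}, which yields the conclusion with the same universal constant $K$. We may take $K$ to be the Park--Pham constant.

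There is essentially no obstacle. The only point needing care is reading the conclusion correctly, as a bound on $p_c(\mathcal{F})$ rather than literally on $\mu_p(\mathcal{F})$, and noting that monotonicity of $\mu_p(\mathcal{F})$ makes the two formulations of the threshold bound equivalent. An independent proof not relying on Park--Pham would instead need the spread/minimum-fragment iteration of Frankston--Kahn--Narayanan--Park. Since Theorem~\ref{Uniform Kahn-Kalai Theorem} is available earlier in the paper, that is unnecessary here.
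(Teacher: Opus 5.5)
Your proposal is correct and is exactly the paper's argument. The paper notes that $V(\mathcal{F},q)\ge V_f(\mathcal{F},q)$ because every $\{0,1\}$-feasible cover is feasible for the relaxation, so $V_f(\mathcal{F},q)\ge 1/2$ forces $V(\mathcal{F},q)\ge 1/2$, and Theorem~\ref{Uniform Kahn-Kalai Theorem} then applies with the same constant $K$; your reading of the conclusion as a bound on $p_c(\mathcal{F})$ is sensible, though the transfer works verbatim under either reading since only the hypothesis is being upgraded.
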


The main benefit from Theorem~\ref{Uniform Fractional Kahn-Kalai Theorem} is a relation between $V_f({\mathcal F},q)$ and the notion {\em spread}, which corresponds to the variables in the dual of $V_f({\mathcal F},q)$, given as follows.
\begin{alignat*}{7}
    &\max &&\displaystyle\sum\limits_{T\in\mathcal{F}} \nu(T)
    \\
    &\text{ subject to }\quad &&\nu(T)\geq 0 &&\hspace{3mm}&&\forall\hspace{1mm}  T\in\mathcal{F}
    \\ 
    & &&\sum\limits_{T\supseteq S} \nu(T)\leq q^{\abs{S}} &&\hspace{3mm}&&\forall\hspace{1mm} S\subseteq X.
\end{alignat*}

From this, the notion of spread is defined by adding an additional factor of $2$ to allow probability measures.

\begin{definition*}
Let $\mathcal{H}\subseteq 2^X$. A probability measure $\nu$ supported on $\mathcal{H}$ is \vocab{$q$-spread} if
\begin{center}
    $\displaystyle\sum\limits_{T\supseteq S} \nu(T)\leq 2q^{\abs{S}}$
\end{center}
for all $S\subseteq X$.
\end{definition*}

The main idea with spread is as follows. If $\nu$ is a $q$-spread probability measure supported on an increasing family $\mathcal{F}$, then taking $\nu' := \frac{1}{2}\nu$, it follows that $\nu'$ is a feasible solution of the dual of $V_f(\mathcal{F},q)$ with optimal value
\begin{center}
    $\displaystyle\sum\limits_{T\in\mathcal{F}} \nu'(T) = \frac{1}{2}\sum\limits_{T\in\mathcal{F}} \nu(T) = \frac{1}{2}$.
\end{center}
Then, by weak-duality, it follows that $V_f(\mathcal{F},q)\geq 1/2$ and we can apply Theorem~\ref{Uniform Fractional Kahn-Kalai Theorem}. In other words, the problem shifts to finding $q$-spread probability measures on increasing families.

Our applications specifically focus on subgraph containment problems, which, for the relevant choice of $\mathcal{F}$, will typically produce uniform hypergraphs $\mathcal{H}$ such that $\mathcal{F} = \gen{\mathcal{H}}$ (for simple graphs, $\mathcal{H}$ will simply be a $2$-uniform hypergraph). To that end, for $r\geq 1$, define an \vocab{$r$-graph} to be a hypergraph where the size of each edge is equal to $r$ and let $\mathcal{K}_n^r$ be the complete $r$-graph on $[n]$. Often in our applications, such as determining the threshold for the appearance of a perfect matching or a Hamiltonian cycle, we will want to take a specific $r$-graph $H\subseteq\mathcal{K}_n^r$ (where $\abs{V(H)} = n$) and look at the set of copies of $H$. To do this, let $S_n$ denote the symmetric group on $[n]$ and $\mathcal{G}_H := \{\sigma(H): \sigma\in S_n\}$ where $\sigma(H)$ denotes the graph obtained from $\sigma$ acting on $V(H)$. We adopt the convention that a group action on the vertex set of a graph induces an action on the edge set by acting on the corresponding subsets of vertices. Observation 7.5 in \cite{frankston2019thresholds}, which is restated below, determines all values of $q$ such that $\mathcal{G}_H$ supports a $q$-spread probability measure.

\begin{observation}[Frankston et al.~\cite{frankston2019thresholds}]
\label{Observation 7.5 in Fractional Paper}
Let $H\subseteq\mathcal{K}_n^r$, and let $\sigma\in S_n$ be a uniformly random permutation. Uniform measure on $\mathcal{G}_H$ is $q$-spread if and only if 
\begin{center}
    $\prob{\sigma(S)\subseteq H}\leq q^{\abs{S}}$
\end{center}
for all $S\subseteq\mathcal{K}_n^r$ isomorphic to a subhypergraph of $H$. 
\end{observation}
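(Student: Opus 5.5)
Let $\nu$ be the uniform measure on $\mathcal{G}_H$. Since $\mathcal{G}_H$ is the orbit of $H$ under $S_n$, a uniformly random $\sigma\in S_n$ gives $\sigma(H)\sim\nu$. So for every $S\subseteq\mathcal{K}_n^r$,
\[
\sum_{T\supseteq S}\nu(T)=\prob{S\subseteq\sigma(H)}=\prob{\sigma^{-1}(S)\subseteq H}=\prob{\sigma(S)\subseteq H},
\]
where the last equality holds because $\sigma^{-1}$ is also uniform on $S_n$. The whole proof then amounts to comparing $\prob{\sigma(S)\subseteq H}$ with $q^{|S|}$ over the relevant sets $S$. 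I would only need to reconcile this with the factor $2$ in the definition of $q$-spread. I will read the statement as the spread condition $\sum_{T\supseteq S}\nu(T)\le q^{|S|}$, i.e.\ with the constant absorbed, which matches how Observation~7.5 is stated in \cite{frankston2019thresholds}. Either convention goes through verbatim with the same constant on both sides.

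\emph{Forward direction.} If $\nu$ is $q$-spread, the identity above gives $\prob{\sigma(S)\subseteq H}\le q^{|S|}$ for every $S$. In particular this holds for those $S$ isomorphic to a subhypergraph of $H$.

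\emph{Reverse direction.} Suppose the inequality holds for all $S$ isomorphic to a subhypergraph of $H$, and take an arbitrary $S\subseteq\mathcal{K}_n^r$. If $\prob{\sigma(S)\subseteq H}=0$ the bound is trivial. Otherwise some $\sigma$ satisfies $\sigma(S)\subseteq H$. Then $S\cong\sigma(S)$ via the vertex bijection $\sigma$, and $\sigma(S)$ is a subhypergraph of $H$. So $S$ is isomorphic to a subhypergraph of $H$ and the hypothesis applies. The same argument handles $S=\emptyset$ consistently, since both sides equal $1$.

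I expect no real obstacle. The only points needing care are the isomorphism bookkeeping, namely that an edge set mapped into $H$ by a vertex permutation is isomorphic to a subhypergraph, and stating the normalizing constant convention explicitly.
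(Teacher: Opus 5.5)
Your argument is correct, and it uses the same identity the paper relies on. The paper does not reprove this observation (it cites \cite{frankston2019thresholds}), but its non-uniform version, Theorem~\ref{Spreadness for Family of Graphs Theorem} together with the computation $\prob{S\subseteq\vec{H}}=\prob{\sigma(S)\subseteq H}$ in the proof of Theorem~\ref{Hamilton Spread Permutation Theorem}, takes the measure to be the law of $\sigma^{-1}(H)$, which avoids your appeal to $\sigma\stackrel{d}{=}\sigma^{-1}$; your claim that $\sigma(H)$ is uniform on $\mathcal{G}_H$ is orbit--stabilizer, worth citing. Your normalization caveat is necessary, not cosmetic: with the paper's factor-$2$ definition of $q$-spread, the statement as written holds only in the ``if'' direction, and the ``iff'' needs $2q^{|S|}$ on the right, as you say.
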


For our applications, we generally consider the previous results (these were stated more generally for any finite set $X$) for the case where $X$ is equal to $\binom{[n]}{r}$, the set of edges in $\mathcal{K}_n^r$. Observation~\ref{Observation 7.5 in Fractional Paper} can be used in applications $\mathcal{G}(n,p)$ where the edge probability is uniform.

\subsection{Non-uniform spread} 
\label{section: Non-Uniform Spread}

We extend the framework and definitions from~\cite{frankston2019thresholds} and Section~\ref{section: Uniform Spread Intro} to allow multiple probabilities. As before, let $X$ be a finite set and consider an increasing family $\mathcal{F}\subseteq 2^X$ with $\mathcal{F}\neq\emptyset, 2^X$. Given $\vec{p}\in [0,1]^X$, recall from~\eqref{def:measure} that $\mu_{\vec{p}}$ is the product measure on $2^X$ defined by
\begin{center}
    $\mu_{\vec{p}}(S) := \prod\limits_{x\in S} p_x\prod\limits_{y\notin S} (1-p_y)$
\end{center}
for all $S\subseteq X$ and that $X_{\vec{p}}$ is a random subset of $X$ drawn from $\mu_{\vec{p}}$. The following definitions are from \cite{park2023simple}. Given $\mathcal{G}\subseteq 2^X$ and $\vec{q}\in [0,1]^X$, define:
\begin{eqnarray}
\gen{\mathcal{G}} &=& \bigcup\limits_{S\in\mathcal{G}} \{T: T\supseteq S\}\\
\mu_{\vec{p}}(\mathcal{G}) &=& \displaystyle\sum\limits_{S\in\mathcal{G}} \mu_{\vec{p}}(S)\\
    e_{\vec{q}}(\mathcal{G}) &=& \expect{\abs{\{S\in\mathcal{G}: X_{\vec{q}}\supseteq S\}}} = \displaystyle\sum\limits_{S\in\mathcal{G}} \prod\limits_{x\in S} q_x\label{defn: e_q}.
\end{eqnarray}

\noindent That is, $e_{\vec{q}}(\mathcal{G})$ is the expected number of elements of $\mathcal{G}$ that are subsets of $X_{\vec{q}}$. Given $\vec{q}\in [0,1]^X$, we can define $V(\mathcal{H}, \vec{q}) := \min\{e_{\vec{q}}(\mathcal{G}):\ \mathcal{H}\subseteq \gen{\mathcal{G}}\}$. We say $\mathcal{H}\subseteq 2^X$ is \vocab{$\ell$-bounded} if $\abs{S}\leq\ell$ for all $S\in\mathcal{H}$.  Park and Vondr{\'a}k~\cite{park2023simple} generalized the Park-Pham Theorem to allow for non-uniform probabilities as follows.

\begin{theorem}[Park-Vondr{\'a}k~\cite{park2023simple}]
\label{Restatement of P-V Theorem 1}
Let $\ell\geq 1$, $\vec{q}\in [0,1]^X$ and $p_x = 1-(1-q_x)^{4\floor{\log(2\ell)}+7}$ for all $x\in X$. Let $\mathcal{H}\subseteq 2^X$ be an $\ell$-bounded family. If $V(\mathcal{H},\vec{q})\geq 1/2$, then $\mu_{\vec{p}}(\gen{\mathcal{H}})\geq 1/2$.
\end{theorem}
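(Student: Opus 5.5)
The plan is to adapt the Park--Pham ``minimum fragment'' argument directly to the product measure. I would not reduce to the uniform case by splitting each element into copies. The constant $L:=4\lfloor\log(2\ell)\rfloor+7$ is the number of rounds: $X_{\vec{p}}$ has the same law as $W=W_1\cup\cdots\cup W_L$, where the $W_i$ are independent copies of $X_{\vec{q}}$, because $1-p_x=(1-q_x)^L$. We argue by contraposition. Assume $\mu_{\vec{p}}(\gen{\mathcal{H}})<1/2$; the goal is to build a cover $\mathcal{G}$ of $\mathcal{H}$ with $e_{\vec{q}}(\mathcal{G})<1/2$, which contradicts $V(\mathcal{H},\vec{q})\ge 1/2$.

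\textbf{Step 1 (iterated fragments).} Put $\mathcal{H}_0=\mathcal{H}$ and $\ell_0=\ell$. In round $i$, given an $\ell_{i-1}$-bounded family $\mathcal{H}_{i-1}$ and the sample $W_i$, do the following for each $S\in\mathcal{H}_{i-1}$. Choose (by a fixed rule) an $S'\in\mathcal{H}_{i-1}$ with $S'\subseteq S\cup W_i$ that minimises $|S'\setminus W_i|$, and set $T(S,W_i)=S'\setminus W_i$. Call $S$ \emph{good} if $|T|\le \ell_{i-1}/2$ and \emph{bad} otherwise. Let $\mathcal{H}_i$ be the family of fragments of the good sets, so $\ell_i=\ell_{i-1}/2$. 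Let $\mathcal{U}_i$ collect the fragments $T(S,W_i)$ coming from bad $S$, and add $\mathcal{U}_i$ to the cover.

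After about $\log_2\ell$ halvings the fragments are empty, so some member of $\mathcal{H}$ lies inside $W$. The extra rounds, up to $L$, are kept in reserve for the accounting in Step 3. By construction every $S\in\mathcal{H}$ contains either a bad fragment from some $\mathcal{U}_i$, or, through the chain, a set inside $W$. Hence on the event $W\notin\gen{\mathcal{H}}$ the family $\mathcal{G}=\bigcup_i\mathcal{U}_i$ covers $\mathcal{H}$.

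\textbf{Step 2 (weighted coding lemma; the main obstacle).} The key claim is that for each round, writing $k$ for the fragment size,
\[
\mathbb{E}_{W_i}\, e_{\vec{q}}\big(\{T(S,W_i): S \text{ bad}, |T|=k\}\big)\le C^{-k}
\]
for some constant $C\ge 2$ (say $C=2$ for the sizes that occur). In the uniform proof this is a counting/encoding argument. One encodes the pair $(W_i,T)$ by $Z=W_i\cup T$, an arbitrary $S\in\mathcal{H}_{i-1}$ with $S\subseteq Z$ (chosen by a fixed rule), and the position of $T$ inside $S$. By minimality $T$ is determined up to about $2^{\ell_{i-1}}$ choices, and the size shift $|Z|=|W_i|+k$ costs a factor $q^k$.

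In the weighted setting I would replace the counting by a likelihood comparison. The weight $\Pr(W_i=W)\prod_{x\in T}q_x$ equals $\Pr(X_{\vec{q}}=Z)\prod_{x\in T}(1-q_x)\le \Pr(X_{\vec{q}}=Z)$. Summing over the at most $2^{\ell_{i-1}}$ preimages of each $Z$ gives roughly $2^{\ell_{i-1}}\,\mu_{\vec{q}}(\cdot)$. Because $k>\ell_{i-1}/2$, this is small only after a further gain per element of $T$. I expect to obtain that gain by splitting each round into several independent sub-samples, which is where the factor $4$ in $L$ is used. This re-derivation of the Park--Pham coding bound with non-uniform weights is the step I expect to be hardest.

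\textbf{Step 3 (summing up).} Sum the bound of Step 2 over $k>\ell_{i-1}/2$ and over the $L$ rounds. The resulting expected cover weight $\mathbb{E}\,e_{\vec{q}}(\mathcal{G})$ is a small constant, made below $1/4$ by the additive $+7$ in $L$. Restrict to the event $W\notin\gen{\mathcal{H}}$, which has probability greater than $1/2$, and use Markov's inequality. This yields a realisation in which $\mathcal{G}$ covers $\mathcal{H}$ and $e_{\vec{q}}(\mathcal{G})<1/2$, contradicting $V(\mathcal{H},\vec{q})\ge1/2$.
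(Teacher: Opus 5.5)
The paper does not prove this statement. It cites Park--Vondr\'ak and only adds a remark turning their strict inequalities into the non-strict form by continuity ($\vec{q}\mapsto(1+\varepsilon)\vec{q}$, $\varepsilon\to 0$). Your Steps~1 and~3 are the right Park--Pham framework: minimum fragments, halving the size bound, bad fragments covering $\mathcal{H}$ on the event $W\notin\gen{\mathcal{H}}$, then Markov. That framework would even give the non-strict form directly.

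The gap is Step~2, which carries the entire content of the theorem and is not established. As you note yourself, with $W_i\sim X_{\vec{q}}$ the likelihood identity gives no gain per element of $T$, only the loss $\binom{\lfloor\ell_{i-1}\rfloor}{k}$. The missing idea is to make each round's sample the union of $m$ independent copies, $W^{(i)}\sim X_{\vec{r}}$ with $1-r_x=(1-q_x)^m$, and to use
\[
\Pr\bigl(W^{(i)}=W\bigr)\prod_{x\in T}q_x=\Pr\bigl(W^{(i)}=W\cup T\bigr)\prod_{x\in T}\frac{q_x(1-r_x)}{r_x}\le m^{-|T|}\Pr\bigl(W^{(i)}=W\cup T\bigr).
\]
This follows from $1-(1-q)^m\ge mq(1-q)^{m-1}$. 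Combine it with your injective encoding of $(W,T)$ by $W\cup T$ and the position of $T$ inside a canonically chosen member of $\mathcal{H}_{i-1}$ contained in $W\cup T$. This bounds the expected $e_{\vec{q}}$-weight of the round-$i$ bad fragments by $\sum_{k>\ell_{i-1}/2}\binom{\lfloor\ell_{i-1}\rfloor}{k}m^{-k}$.

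Even granting this, your accounting of the exponent does not close. Take $m=4$ copies per halving round, which is your reading of the factor $4$ and already the favourable reading of $\log$ as $\log_2$. You then sit exactly at the borderline $\binom{j}{k}\approx 2^{j}=4^{j/2}$:
\begin{itemize}
\item The bound for the final round, where fragments are singletons, is $1/m=1/4$, the entire budget Markov allows.
\item A round with $\lfloor\ell_{i-1}\rfloor=j$ contributes order $j^{-1/2}$: about $0.06$ each for $j=2,4,8$, and about $0.2$ in total from the rounds with $j\ge 16$ when $\ell$ is a large power of $2$.
\item The total is about $0.65$, and no placement of seven spare copies brings it below $1/4$.
\end{itemize}
Your claimed uniform bound $C^{-k}$ with $C\ge 2$ also fails at this $m$. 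It needs roughly $m\ge 8$, since $\binom{j}{k}\le 4^k$ for $k>j/2$ only gives $C=m/4$. Even then, summing $2^{-k}$ over $k\ge 1$ in the last round gives $1$, not a small constant.

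With, say, $m=16$ and exact binomials in the last two rounds, your scheme does go through, with total cost below $1/4$. That proves the theorem with exponent $16\lfloor\log_2(2\ell)\rfloor$. This would suffice for every use in the paper, which only needs $T_{\ell}(\vec{q})\le O(\log\ell)\,\vec{q}$ together with $\alpha=\omega(1)$. The stated exponent $4\lfloor\log(2\ell)\rfloor+7$, however, requires the sharper analysis of Park--Vondr\'ak, which your proposal does not supply.
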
 

\begin{remark*}
The original statement in~\cite{park2023simple}
(they used $c_{\vec{q}}(\mathcal{H})$ instead of $V(\mathcal{H},\vec{q})$) has strict inequalities (that is, if $V(\mathcal{H},\vec{q})> 1/2$, then $\mu_{\vec{p}}(\gen{\mathcal{H}})> 1/2$). Theorem~\ref{Restatement of P-V Theorem 1} follows immediately by the continuity of $\mu$. Indeed, suppose $\vec{q}\in [0,1]^X$ such that $V(\mathcal{H},\vec{q}) = 1/2$ and let $\mathcal{G}\subseteq 2^X$ such that $e_{\vec{q}}(\mathcal{G}) = V(\mathcal{H},\vec{q})$. Note that $\emptyset\notin\mathcal{G}$, as otherwise, $V(\mathcal{H},\vec{q}) = 1$. It follows that for any $\varepsilon > 0$, we have $V(\mathcal{H},\vec{q}_\varepsilon)\geq (1+\varepsilon)V(\mathcal{H},\vec{q}) > 1/2$ where $\vec{q}_\varepsilon := (1+\varepsilon)\vec{q}$ (since every term in the sum in~\eqref{defn: e_q} will be multiplied by $(1+\varepsilon)^{\abs{S}}\geq 1+\varepsilon$. If $\vec{p}_\varepsilon = T_\ell(q_\varepsilon)$ (see~\eqref{def:T} for the definition of $T_\ell$), then $\mu_{\vec{p}_\varepsilon}(\gen{H}) > 1/2$. By continuity of $\mu$ and since $\vec{p}_\varepsilon\rightarrow\vec{p}$ as $\varepsilon\rightarrow 0$, it follows that $\mu_{\vec{p}}(\gen{H})\geq 1/2$.
\end{remark*}

Observe that $\mathcal{H}$ contains the set of minimal elements of $\gen{\mathcal{H}}$, and so the above result is a slight generalization of Theorem~\ref{Uniform Kahn-Kalai Theorem} in the uniform case as $\mathcal{F} = \gen{\mathcal{F}}$ when $\mathcal{F}$ is an increasing family. For convenience, we define the transformations $T_{\ell}:[0,1]^X\rightarrow [0,1]^X$ for each $\ell\geq 1$ such that 
\begin{equation}\label{def:T}
T_{\ell}(\vec{q}) = \vec{p},\quad \text{where\ } p_x = 1-(1-q_x)^{4\floor{\log(2\ell)}+7} \text{ for all $x\in X$.}
\end{equation}

As in the uniform setting, we  write $V(\mathcal{H}, \vec{q})$ as an integer program, and then find its LP relaxation, denoted by $V_f(\mathcal{H},\vec{q})$, as follows.
\begin{alignat*}{7}
    V_f(\mathcal{H},\vec{q}) 
    = \min& &&\displaystyle\sum\limits_{S\subseteq X} g(S)\prod\limits_{x\in S} q_x
    \\
    \text{subject to}&\quad &&\sum\limits_{S\subseteq T} g(S)\geq 1 &&\hspace{3mm}&&\forall\hspace{1mm}  T\in\mathcal{H}\text{ and }
    \\
    &&& g(S)\geq 0 &&\hspace{3mm}&&\forall\hspace{1mm} S\subseteq X.
\end{alignat*}

\begin{remark*}
\normalfont
Let $\mathcal{H}\subseteq 2^X$ and let $\vec{q}\in [0,1]^X$. Then, $V(\mathcal{H},\vec{q})\geq V_f(\mathcal{H},\vec{q})$.
\end{remark*}

The dual of $V_f(\mathcal{H},\vec{q})$ is given below.
\begin{alignat*}{7}
    \max& &&\displaystyle\sum\limits_{T\in\mathcal{H}} \nu(T)
    \\
    \text{subject to}&\quad &&\nu(T)\geq 0 &&\hspace{3mm}&&\forall\hspace{1mm}  T\in\mathcal{H}
    \\ 
    & &&\sum\limits_{T\supseteq S} \nu(T)\leq\prod\limits_{s\in S} q_s &&\hspace{3mm}&&\forall\hspace{1mm} S\subseteq X.
\end{alignat*}
Motivated by the dual variables above we introduce the following definition of \vocab{spread} in the non-uniform setting.

\begin{definition*}
We say a probability measure $\nu$ supported on $\mathcal{H}$ is \vocab{$\vec{q}$-spread}  if
\begin{center}
    $\displaystyle\sum \limits_{T\supseteq S} \nu(T) \leq 2\prod\limits_{s\in S} q_s$ \quad for all $S\subseteq X$.  
\end{center}
We call such a measure a \vocab{$\vec{q}$-spread probability measure}.
\end{definition*}

The purpose of spread is the same as in the uniform case; Lemma~\ref{lemma: Analogue of Observation 7.4} is the non-uniform analogue of Observation~\ref{Observation 7.5 in Fractional Paper}.

\begin{lemma}
\label{lemma: Analogue of Observation 7.4}
Let $\mathcal{H}\subseteq 2^X$ and $\vec{q}\in [0,1]^X$. If $\mathcal{H}$ supports a $\vec{q}$-spread probability measure, then $V(\mathcal{H}, \vec{q})\geq 1/2$.
\end{lemma}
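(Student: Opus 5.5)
The plan is a direct weak-duality argument, in the same spirit as the uniform case discussed in Section~\ref{section: Uniform Spread Intro}; nothing beyond the LP setup is needed. Fix a $\vec{q}$-spread probability measure $\nu$ supported on $\mathcal{H}$. Also fix an arbitrary cover $\mathcal{G}\subseteq 2^X$ with $\mathcal{H}\subseteq\gen{\mathcal{G}}$. Since $V(\mathcal{H},\vec{q})$ is the minimum of $e_{\vec{q}}(\mathcal{G})$ over such covers, it suffices to show $e_{\vec{q}}(\mathcal{G})\ge 1/2$ for every such $\mathcal{G}$. (Equivalently, one can prove $V_f(\mathcal{H},\vec{q})\ge 1/2$ and then use the remark that $V\ge V_f$; the computation is the same.)

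The key step is a double count. Every $T\in\mathcal{H}$ lies in $\gen{\mathcal{G}}$, so there is some $S\in\mathcal{G}$ with $S\subseteq T$. Hence
\[
1=\sum_{T\in\mathcal{H}}\nu(T)\le \sum_{T\in\mathcal{H}}\nu(T)\sum_{S\in\mathcal{G},\,S\subseteq T}1=\sum_{S\in\mathcal{G}}\sum_{T\supseteq S}\nu(T).
\]
Applying the spread condition to each inner sum gives the bound
\[
1\le \sum_{S\in\mathcal{G}} 2\prod_{s\in S}q_s = 2\,e_{\vec{q}}(\mathcal{G}),
\]
using the definition~\eqref{defn: e_q}. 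This yields $e_{\vec{q}}(\mathcal{G})\ge 1/2$. Minimizing over $\mathcal{G}$ then gives $V(\mathcal{H},\vec{q})\ge 1/2$.

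For the LP phrasing: $\nu/2$ is feasible for the dual of $V_f(\mathcal{H},\vec{q})$ with objective value $1/2$. For any feasible primal $g$,
\[
\sum_{S\subseteq X} g(S)\prod_{s\in S} q_s\ \ge\ \sum_S g(S)\sum_{T\supseteq S}\tfrac12\nu(T)\ =\ \sum_T \tfrac12\nu(T)\sum_{S\subseteq T}g(S)\ \ge\ \tfrac12 .
\]

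There is no real obstacle. The only points needing care are the following. The interchange of summation is over finite sets. The indicator-of-cover inequality uses $\nu\ge 0$ and $\mathrm{supp}\,\nu\subseteq\mathcal{H}$. The factor $2$ in the definition of spread is exactly what produces the constant $1/2$. The edge case $\emptyset\in\mathcal{G}$ is harmless, since it contributes the empty product $1\ge 1/2$.
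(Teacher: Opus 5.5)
Your proof is correct and is essentially the paper's argument. The paper observes that $\nu/2$ is feasible for the dual of $V_f(\mathcal{H},\vec{q})$ with value $1/2$, invokes weak duality to get $V_f(\mathcal{H},\vec{q})\ge 1/2$, and then uses $V\ge V_f$, which is exactly your LP phrasing; your main double-counting argument is the same weak-duality inequality written out explicitly for $0/1$ covers $\mathcal{G}$, so it bypasses the relaxation without changing the idea.
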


Again, for our applications, we will specifically focus on subgraph containment problems, and so we adopt the same $\mathcal{G}_H$ defined in Section~\ref{section: Uniform Spread Intro}, where $H\in\mathcal{K}_n^r$. In this setting, we again use $X = \binom{[n]}{r}$. Furthermore, let $\mathcal{F}_H = \gen{\mathcal{G}_H}$, the family of graphs which contain a copy of $H$ as a subgraph. Unlike the uniform case, the uniform measure on $\mathcal{G}_H$ often does not yield a good upper bound on $\vec{p}^*$, and so we are forced to search among a broader set of probability measures on $\mathcal{G}_H$ in order to obtain a good Kahn-Kalai type of upper bound. Recall $T_{\ell}$ from~(\ref{def:T}). 

\begin{theorem}
\label{Analogue of Observation 7.5}
Let $H\subseteq\mathcal{K}_n^r$ and $X = \binom{[n]}{r}$ and let $\mathcal{F}_H$  be the set of elements in $2^X$ that contains a copy of $H$ as a subset. Let $\pi$ be a probability measure on $S_n$ and let $\sigma\in S_n$ be sampled by $\pi$. Suppose that $\vec{q}\in [0,1]^X$ satisfies
\[
    \prob{\sigma(S)\subseteq H}\leq 2\displaystyle\prod\limits_{s\in S} q_s
\]
for all $S\subseteq\mathcal{K}_n^r$ isomorphic to a subhypergraph of $H$. If $\vec{p} = T_{\abs{E(H)}}(\vec{q})$, then $\mu_{\vec{p}}\left(\mathcal{F}_H\right)\geq 1/2$.
\end{theorem}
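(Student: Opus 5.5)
The plan is to build a $\vec{q}$-spread probability measure on $\mathcal{G}_H$ and then chain Lemma~\ref{lemma: Analogue of Observation 7.4} with Theorem~\ref{Restatement of P-V Theorem 1}. I would take $\nu$ to be the pushforward of $\pi$ under $\sigma\mapsto\sigma^{-1}(H)$: for $T\in\mathcal{G}_H$ set $\nu(T)=\prob{\sigma^{-1}(H)=T}$. Since each $\sigma^{-1}(H)$ is a copy of $H$, $\nu$ is a probability measure supported on $\mathcal{G}_H$.

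Next I would verify the spread condition. Fix $S\subseteq X=\binom{[n]}{r}$. Then
\[
\sum_{T\supseteq S}\nu(T)=\prob{\sigma^{-1}(H)\supseteq S}=\prob{\sigma(S)\subseteq H}.
\]
There are two cases. If $S$ is not isomorphic to a subhypergraph of $H$, then $\sigma(S)\subseteq H$ is impossible, so the sum is $0$ and the bound holds trivially. Otherwise the hypothesis gives $\prob{\sigma(S)\subseteq H}\le 2\prod_{s\in S}q_s$. For $S=\emptyset$ the left side is $1\le 2$. Hence $\nu$ is $\vec{q}$-spread.

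Then I would apply the two earlier results. Lemma~\ref{lemma: Analogue of Observation 7.4} gives $V(\mathcal{G}_H,\vec{q})\ge 1/2$. The family $\mathcal{G}_H$ is $\abs{E(H)}$-bounded, since every copy of $H$ has exactly $\abs{E(H)}$ edges. Theorem~\ref{Restatement of P-V Theorem 1} with $\ell=\abs{E(H)}$ and $\vec{p}=T_{\abs{E(H)}}(\vec{q})$ then yields $\mu_{\vec{p}}(\gen{\mathcal{G}_H})\ge1/2$, and $\gen{\mathcal{G}_H}=\mathcal{F}_H$.

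The only real obstacle is the bookkeeping in the spread check. Care is needed that the direction of the permutation in the pushforward matches the event $\sigma(S)\subseteq H$, which is why I use $\sigma^{-1}(H)$, and that the empty set and non-embeddable $S$ are handled separately. Lemma~\ref{lemma: Analogue of Observation 7.4} itself is assumed here; if it needed justification, it would follow from weak LP duality applied to $\nu/2$, as in the uniform case.
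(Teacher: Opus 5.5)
Your proof is correct and is essentially the paper's argument. The paper proves the forced-edge Theorem~\ref{Hamilton Spread Permutation Theorem} with $\rho(T)=\prob{\sigma(T\setminus F)=H\setminus F}$, which for $F=\emptyset$ is exactly your pushforward $\nu(T)=\prob{\sigma^{-1}(H)=T}$. It then uses the same identity $\sum_{T\supseteq S}\nu(T)=\prob{\sigma(S)\subseteq H}$, followed by Lemma~\ref{lemma: Analogue of Observation 7.4} and Theorem~\ref{Restatement of P-V Theorem 1} with $\ell=\abs{E(H)}$.
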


Next, we extend Theorem~\ref{Analogue of Observation 7.5} to a more general setting that forces members in $\mathcal{F}$ to all contain a set of fixed elements. 
For example, consider $X=\binom{[n]}{2}$ and $F=\{uv\}$. Let $\mathcal{F}_{\texttt{Ham}}$ denote the set of graphs on $[n]$ that contains a Hamilton cycle, and let 
\[
\mathcal{F}_{\texttt{Ham},F}=\{G\in \mathcal{F}_{\texttt{Ham}}: F\subseteq G\}.
\]
In other words, $\mathcal{F}_{\texttt{Ham},F}$ is the set of graphs that contains a Hamilton cycle $C$ such that $F\subseteq C$. With this setting, we are not interested in the probability that $\G(n,p)$ contains a member of $\mathcal{F}_{\texttt{Ham},F}$, since this probability is bounded by $p=\pr(uv\in \G(n,p))$, and there are no nontrivial phase transitions. What interests us is the probability that $\G(n,p)\cup F$ contains a member of $\mathcal{F}_{\texttt{Ham},F}$, which is exactly the probability that $\G(n,p)$ contains a Hamilton path that joins two specific vertices $u$ and $v$. There are many other applications where one can carefully choose $F$ and use them to patch different pieces of subgraphs, such as disjoint paths, to form larger structures such as Hamilton cycles. We explore this further in a subsequent paper. In this work, we generalize the setting so that we can estimate probabilities such as $\pr(\G(n,p)\cup F\in  \mathcal{F}_{\texttt{Ham},F})$.

Given an $r$-graph $H\subseteq\mathcal{K}_n^r$, and $F\subseteq H$, let
\[
\mathcal{F}_{H,F}=\{G\in \gen{\mathcal{G}_H}: F\subseteq G\}.
\]
In other words,
$\mathcal{F}_{H,F}$ denotes the set of graphs $G$ such that $G$ contains a subgraph $H'\cong H$ with $F\subseteq H'$. Let
\[
    S_n(F)=\{\sigma\in S_n: \sigma(u)=u\ \text{for all $u$ that is incident to some edge in $F$}\}
\]
and let $G\setminus H$  denote the hypergraph obtained by removing the edges of $H$ from $G$.

\begin{theorem}
\label{Hamilton Spread Permutation Theorem}
Let $H\subseteq\mathcal{K}_n^r$, $F\subseteq E(H)$, and let $X = \binom{[n]}{r}$. Let $\pi$ be a probability measure on $S_n(F)$ and let $\sigma\in S_n(F)$ be sampled by $\pi$. Suppose that $\vec{q}\in [0,1]^X$  satisfies
\[
    \prob{\sigma(R)\subseteq H}\leq 2\displaystyle\prod\limits_{r\in R\setminus F} q_r
\]
for all $F\subseteq R\subseteq X$ isomorphic to a subhypergraph of $H$. If $\vec{p} = T_{\abs{E(H)}}(\vec{q})$, then
\[
\pr(X_{\vec{p}}\cup F\in \mathcal{F}_{H,F})\ge 1/2.
\]
\end{theorem}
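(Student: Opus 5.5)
The plan is to reduce Theorem~\ref{Hamilton Spread Permutation Theorem} to Theorem~\ref{Restatement of P-V Theorem 1} applied on the smaller ground set $X' = X\setminus F$. The forced elements of $F$ are treated as already present, and the spread condition is required only on the remaining elements. Concretely, define the family
\[
\mathcal{H}' = \{\sigma(H)\setminus F : \sigma\in S_n(F)\}\subseteq 2^{X'}.
\]
Since every $\sigma\in S_n(F)$ fixes each vertex incident to an edge of $F$, it fixes each edge of $F$, so $F\subseteq\sigma(H)$ for every such $\sigma$. Hence if $X_{\vec p}\cap X'$ contains some member $\sigma(H)\setminus F$ of $\mathcal{H}'$, then $X_{\vec p}\cup F\supseteq\sigma(H)\supseteq F$, so $X_{\vec p}\cup F\in\mathcal{F}_{H,F}$. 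It therefore suffices to show that $\mu_{\vec p'}(\gen{\mathcal{H}'})\ge 1/2$ on $2^{X'}$, where $\vec p'$ is the restriction of $\vec p$ to $X'$. The family $\mathcal{H}'$ is $|E(H)|$-bounded, and $T_\ell$ is monotone in $\ell$ (more iterations give larger $p_x$), so $T_{|E(H)|}$ is the right transformation to use.

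By Theorem~\ref{Restatement of P-V Theorem 1} and Lemma~\ref{lemma: Analogue of Observation 7.4}, both applied on ground set $X'$, it is enough to exhibit a $\vec q'$-spread probability measure on $\mathcal{H}'$, where $\vec q'=\vec q|_{X'}$. Let $\nu$ be the pushforward of $\pi$ under $\sigma\mapsto\sigma(H)\setminus F$. For any $S\subseteq X'$ we then have
\[
\nu(\{T\in\mathcal{H}': T\supseteq S\}) = \prob{S\subseteq \sigma(H)\setminus F} = \prob{S\cup F\subseteq\sigma(H)} = \prob{\sigma^{-1}(S\cup F)\subseteq H}.
\]
Set $R=\sigma^{-1}$ applied to $S\cup F$, or work with $\sigma^{-1}$ directly. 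Since $\sigma^{-1}\in S_n(F)$ also fixes $F$, the hypothesis should be read with $\sigma$ ranging over the support of $\pi$; I would match the paper's convention, as in Observation~\ref{Observation 7.5 in Fractional Paper}, where $\prob{\sigma(S)\subseteq H}$ is the same event written via the inverse action. If needed, I would replace $\pi$ by its image under inversion, which is still a probability measure on $S_n(F)$.

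Next take $R=S\cup F$. If $R$ is not isomorphic to a subhypergraph of $H$, the probability is $0$. Otherwise the hypothesis gives a bound of $2\prod_{r\in R\setminus F}q_r = 2\prod_{s\in S}q_s$, because $S\cap F=\emptyset$. This is exactly the $\vec q'$-spread condition, and the case $S=\emptyset$ holds trivially. Lemma~\ref{lemma: Analogue of Observation 7.4} then gives $V(\mathcal{H}',\vec q')\ge 1/2$, and Theorem~\ref{Restatement of P-V Theorem 1} with $\ell=|E(H)|$ gives $\mu_{\vec p'}(\gen{\mathcal{H}'})\ge1/2$. Since $X_{\vec p}\cap X'$ has law $\mu_{\vec p'}$, the conclusion follows.

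The main obstacle is bookkeeping rather than depth. The first issue is the direction of the permutation action, meaning whether $\nu$ is the image of $\sigma(H)$ or of $\sigma^{-1}(H)$, and making this consistent with how the hypothesis is phrased. The second is checking that restricting to $X'$ is legitimate: Theorem~\ref{Restatement of P-V Theorem 1} holds for an arbitrary finite ground set, and $\mathcal{H}'$ may contain $\emptyset$ when $F=E(H)$, in which case the claim is trivial.
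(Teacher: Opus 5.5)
Your argument is correct and is essentially the paper's: working on the ground set $X\setminus F$ is equivalent to the paper's device of setting $q'_e=1$ (so $p'_e=1$) for $e\in F$. After your inversion, the measure you use is exactly the paper's $\rho(T)=\pr_{\sigma\sim\pi}(\sigma(T\setminus F)=H\setminus F)$, i.e.\ the law of $\sigma^{-1}(H)$, and the rest is the same spread lemma followed by the Park--Vondr\'ak theorem.

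The inversion is not optional, though. For a general $\pi$ that is not invariant under $\sigma\mapsto\sigma^{-1}$, the pushforward under $\sigma\mapsto\sigma(H)\setminus F$ does not match the hypothesis. So you should define $\nu$ outright as the law of $\sigma^{-1}(H)\setminus F$ rather than leave it as an "if needed" fix.
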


\begin{remark}
\label{remark: Hamilton Spread Is Generalization}
\normalfont
Theorem~\ref{Analogue of Observation 7.5} is the special case of Theorem~\ref{Hamilton Spread Permutation Theorem} with $F = \emptyset$.
\end{remark}

As mentioned earlier, we will explore the application of Theorem~\ref{Hamilton Spread Permutation Theorem} in a subsequent paper, where its strength becomes essential in the study of more challenging subgraph containment problems beyond perfect matchings.

The rest of the paper contains proofs of our main results. It is structured as follows: section~\ref{section: Proofs of Non-Uniform Spread Results} covers the proofs of Theorem~\ref{Analogue of Observation 7.5}, Theorem~\ref{Hamilton Spread Permutation Theorem}, Theorem~\ref{theorem: Non-Uniform Friedgut} and Lemma~\ref{lemma: Analogue of Observation 7.4}, while section~\ref{section: SBM proof} contains the proof of Theorem~\ref{theorem: General Stochastic Perfect Matching Improved}. Section~\ref{section: Proofs for Chung-Lu} contains the proofs for results related to the Chung-Lu model: section~\ref{section: Proof for Bi-Valued Chung-Lu} contains the proof of Theorem~\ref{theorem: Phase Transition for PM in Bi-Valued Families} while section~\ref{section: Proof of k-valued Chung-Lu} covers the proof of Example~\ref{example: 0-statement Chung-Lu k-valued}. Finally, section~\ref{section: Proofs for G(n,d)} contains the proofs for the results related to $\mathcal{G}(n,\vec{d})$: section~\ref{section: proof of G(n,d) 1-statement} covers the proof of Theorem~\ref{theorem: G(n,d) 1-statement}, section~\ref{section: Proof of G(n,d) Ideal 0-statement Counterexample} covers the proof of Example~\ref{example: G(n,d) Ideal 0-statement Counterexample}, section~\ref{section: proof of G(n,d) 0-statement} covers the proof of Theorem~\ref{theorem: G(n,d) bi-valued 0-statement}, and section~\ref{section: Proof of G(n,d) concrete threshold} covers the proof of Example~\ref{example: G(n,d) Ideal 0-statement Example}.

\section{Towards Non-Uniform Kahn-Kalai}
\label{section: Proofs of Non-Uniform Spread Results}

\subsection{Proofs of Theorem~\ref{Analogue of Observation 7.5} and Theorem~\ref{Hamilton Spread Permutation Theorem}}

We only present the proof of Theorem~\ref{Hamilton Spread Permutation Theorem} since Theorem~\ref{Analogue of Observation 7.5} follows directly by remark~\ref{remark: Hamilton Spread Is Generalization}. We first prove the following result, which is similar to the notion of $q$-spread probability measures in ~\cite{mossel2022secondkahnkalaiconjecture}.

\begin{theorem}
\label{Spreadness for Family of Graphs Theorem}
Let $X$ be a finite set. Let $\mathcal{T}\subseteq 2^X$ and let $\rho$ be an arbitrary probability distribution on $\mathcal{T}$. Then, $\rho$ is a $\vec{q}$-spread probability measure on $\mathcal{T}$ if and only if
\[
    \prob{S\subseteq\vec{H}}\leq\displaystyle 2\prod\limits_{s\in S} q_s
\]
for all $S\subseteq X$, where $\vec{H}\in \mathcal{T}$ is drawn according to distribution $\rho$.
\end{theorem}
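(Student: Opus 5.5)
This statement is essentially a rewriting of the definition of a $\vec{q}$-spread probability measure. The plan is therefore a direct unpacking rather than any real estimate. Throughout, view $\rho$ as a probability measure on $2^X$ supported on $\mathcal{T}$, so that $\rho(T)=0$ for $T\notin\mathcal{T}$. Let $\vec{H}$ denote a random element of $\mathcal{T}$ with law $\rho$.

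The key step is the identity, valid for every fixed $S\subseteq X$,
\[
\prob{S\subseteq \vec{H}} \;=\; \sum_{T\in\mathcal{T}:\, T\supseteq S}\rho(T) \;=\; \sum_{T\supseteq S}\rho(T).
\]
The first equality holds because the event $\{S\subseteq\vec{H}\}$ is the disjoint union over $T\in\mathcal{T}$ with $T\supseteq S$ of the events $\{\vec{H}=T\}$, and the latter have probability $\rho(T)$. The second equality holds because $\rho$ vanishes off $\mathcal{T}$, so extending the sum to all $T\subseteq X$ with $T\supseteq S$, as in the definition of spread, changes nothing.

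With this identity both directions are immediate. If $\rho$ is $\vec{q}$-spread, then $\sum_{T\supseteq S}\rho(T)\le 2\prod_{s\in S}q_s$ for all $S\subseteq X$, which by the identity is exactly $\prob{S\subseteq\vec{H}}\le 2\prod_{s\in S}q_s$. Conversely, if the probability bound holds for all $S\subseteq X$, the same identity yields the spread inequality for all $S$. Since $\rho$ is by hypothesis a probability measure supported on $\mathcal{T}$, it is then a $\vec{q}$-spread probability measure on $\mathcal{T}$.

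There is no genuine obstacle here. The only point needing care is bookkeeping: the definition of spread sums over supersets $T$ of $S$ within the support, while the statement is phrased over all $S\subseteq X$. One should check that the case $S=\emptyset$ is harmless, since it gives $1\le 2$. It is also worth noting that the statement is meant to be applied later with $\vec{H}=\sigma(H)$ for $\sigma\sim\pi$, which is how Theorem~\ref{Hamilton Spread Permutation Theorem} and Theorem~\ref{Analogue of Observation 7.5} will reduce to a condition on $\prob{\sigma(S)\subseteq H}$.
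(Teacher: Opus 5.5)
Your proposal is correct and follows essentially the same route as the paper. Both reduce the equivalence to the identity $\mathbb{P}(S\subseteq\vec{H})=\sum_{T\supseteq S,\,T\in\mathcal{T}}\rho(T)=\sum_{T\supseteq S}\rho(T)$, using that $\rho$ is supported on $\mathcal{T}$, after which the spread condition and the probability bound are literally the same inequality.
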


\begin{proof}
First, recall that $\rho$ is a $\vec{q}$-spread probability measure on $\mathcal{T}$ if and only if
\begin{center}
    $\displaystyle\sum\limits_{\substack{T\supseteq S:\\T\in\mathcal{T}}} \rho(T)\leq 2\displaystyle\prod\limits_{s\in S} q_s$\quad for all $S\subseteq X$.
\end{center}
By definition, for all $S\subseteq X$, with $\vec{H}\sim\rho$,
\begin{center}
    $\prob{S\subseteq\vec{H}} = \displaystyle\sum\limits_{S\subseteq T\subseteq X} \rho(T) = \displaystyle\sum\limits_{T\supseteq S:\ T\in\mathcal{T}} \rho(T)$
\end{center}
where the last equality above holds by the fact that $\rho$ is supported on $\mathcal{T}$. Hence, $\rho$ is a $\vec{q}$-spread probability measure on $\mathcal{T}$ if and only if 
\begin{center}
    $\prob{S\subseteq\vec{H}}\leq 2\displaystyle\prod\limits_{s\in S} q_s$ \quad for all $S\subseteq X$,
\end{center}
 as desired.
\end{proof}

We now prove Theorem~\ref{Hamilton Spread Permutation Theorem}.

\begin{proof}
Let $\vec{q}'\in [0,1]^X$ such that $\vec{q}'_e = \vec{q}_e$ for all $e\in Y$, where $Y = X\setminus F$, and $\vec{q}'_e = 1$ for all $e\in F$. Let $\vec{p}' = T_{\abs{E(H)}}(\vec{q}')$. Note that $\vec{p}'_e = \vec{p}_e$ for all $e\in Y$ and $\vec{p}'_e = 1$ for all $e\in F$. It follows that $\prob{X_{\vec{p}}\cup F\in\mathcal{F}_{H,F}} = \mu_{\vec{p}'}(\mathcal{F}_{H,F})$ and so it suffices to prove that $\mu_{\vec{p}'}(\mathcal{F}_{H,F})\geq 1/2$.

Let $\mathcal{G}_{H,F} = \{\sigma(H): \sigma\in S_n(F)\}$.
Obviously, $\gen{\mathcal{G}_{H,F}}\subseteq \mathcal{F}_{H,F}$.
Define probability $\rho$ on $\mathcal{G}_{H,F}$ by
\[
    \rho(T)=\pr_{\sigma\sim \pi}(\sigma(T\setminus F)=H\setminus F),\quad \text{for every $T\in \mathcal{G}_{H,F}$.}
\]
Let $\vec{H}\sim\rho$. We show that
\begin{equation}
    \label{forced edge desired inequality}
    \prob{S\subseteq\vec{H}}\leq 2\prod\limits_{s\in S} q'_s \quad \text{for all $S\subseteq X$.}
\end{equation}
First, observe that we may assume that $F\subseteq S$ as otherwise the left hand side of~\eqref{forced edge desired inequality} is zero by the assumption that every $\sigma$ in the support of $\pi$ fixes all the vertices incident to some edge in $F$.
For any $F\subseteq S\subseteq X$,
\[
    \prob{S\subseteq\vec{H}} = \displaystyle\sum\limits_{\substack{T\in\mathcal{G}_{H,F},\\T\supseteq S}} \rho(T) = \displaystyle\sum\limits_{\substack{T\in\mathcal{G}_{H,F},\\T\supseteq S}} {\mathbb P}_{\sigma\sim \pi}(\sigma(T\setminus F)=H\setminus F) =\pr_{\sigma\sim\pi}(\sigma(S\setminus F)\subseteq H\setminus F),
\]
where the last equality above holds since $S\subseteq T$ if and only if $\sigma(S\setminus F)\subseteq\sigma(T\setminus F)$ by the definition of $S_n(F)$.  Furthermore, note that if $\sigma(S\setminus F)\subseteq H\setminus F$, then $\sigma(S)\subseteq H$ since $\sigma(F) = F$. It follows that
\[
    \prob{S\subseteq\vec{H}}\leq\prob{\sigma(S)\subseteq H}\leq 2\prod\limits_{s\in S\setminus F} q'_s = 2\prod\limits_{s\in S} q'_s
\]
by the theorem assumptions. Hence,~\eqref{forced edge desired inequality} holds for all $S\subseteq X$. 

By Theorem~\ref{Spreadness for Family of Graphs Theorem},  $\rho$ is a $\vec{q}'$-spread probability measure on $\mathcal{G}_{H,F}$. By Lemma~\ref{lemma: Analogue of Observation 7.4}, $V(\mathcal{G}_{H,F},\vec{q}')\geq 1/2$. Note that $\mathcal{G}_{H,F}$ is $\abs{E(H)}$-bounded because every element of $\mathcal{G}_{H,F}$ has $\abs{E(H)}$ edges. By Theorem~\ref{Restatement of P-V Theorem 1}, it follows that $1/2\leq \mu_{\vec{p}'}(\gen{\mathcal{G}_{H,F}})\leq\mu_{\vec{p}'}(\mathcal{F}_{H,F})$.
\end{proof}

\subsection{Proof of Theorem~\ref{theorem: Non-Uniform Friedgut}}
\label{section: Non-Uniform Friedgut Proof}

The proof of this theorem closely follows~\cite{Bollobs1987ThresholdF}.
Let $X_1, X_2,\ldots, X_k$ be independent copies of $X_{\vec{p}}$. Note that $X_1\cup X_2\cup\cdots\cup X_k$ is distributed as $X_{\vec{r}}$ where $\vec{r}\in [0,1]^X$ such that $r_x = 1-\left(1-p_x\right)^k$ for all $x\in X$. Observe further that $r_x\leq kp_x$ for all $x\in X$ and so
\begin{center}
    $\prob{X_{k\vec{p}}\notin\mathcal{F}}\leq \prob{X_{\vec{r}}\notin\mathcal{F}} = \left[\prob{X_{\vec{p}}\notin\mathcal{F}}\right]^k$.
\end{center}
Taking $\vec{p} = \vec{p}^*$  and $k = \alpha$ (recall that $\alpha=\omega(1)$ by the theorem assumption) above yields
\[
    \prob{X_{\alpha\vec{p}^*}\notin\mathcal{F}}\leq \left[ \prob{X_{\vec{p}^*}\notin\mathcal{F}}\right]^\alpha = \left[1-\mu_{\vec{p}^*}\left(\mathcal{F}\right)\right]^ \alpha < (1-\varepsilon)^\alpha = o(1),
\]
and so $\mu_{\alpha\vec{p}^*}(\mathcal{F})\rightarrow 1$. Similarly, we have
\[
    \varepsilon < \prob{X_{\vec{p}^*}\notin\mathcal{F}}\leq\left[\prob{X_{\vec{p}^*/\alpha}\notin\mathcal{F}}\right]^\alpha,
\]
which implies that
\[
    \prob{X_{\vec{p}^*/\alpha}\notin\mathcal{F}}\geq \varepsilon^{1/\alpha} = 1-o(1),
\]
and so $\mu_{\vec{p}^*/\alpha}(\mathcal{F})\rightarrow 0$.
\qed

\subsection{Proof of Lemma~\ref{lemma: Analogue of Observation 7.4}}
\label{section: Spread Lemma Proof}

Let $\nu'$ be a $\vec{q}$-spread probability measure supported on $\mathcal{F}$. Thus, 
\begin{center}
    $\displaystyle\sum\limits_{T\supseteq S} \nu'(T)\leq 2\prod\limits_{s\in S} q_s$ \quad \text{for all $S\subseteq X$.}
\end{center}
 Letting $\nu := \frac{1}{2}\nu'$, note that $\nu$ is a feasible solution of the dual of $V_f(\mathcal{F}, \vec{q})$, since $\nu(T)\geq 0$ for all $T\in\mathcal{F}$ by the definition of $\nu$ and the fact that $\nu'$ is a probability measure, and
\begin{center}
$\displaystyle\sum\limits_{T\supseteq S} \nu(T)\leq\prod\limits_{s\in S} q_s$, \quad for all $S\subseteq X$.
\end{center}
 Since $\nu'$ is a probability measure supported on $\mathcal{F}$, it follows that
\begin{center}
    $\displaystyle\sum\limits_{T\in\mathcal{F}} \nu(T) = \frac{1}{2}\sum\limits_{T\in\mathcal{F}} \nu'(T) = \frac{1}{2}$.
\end{center}
This means that the optimal value of the dual of $V_f(\mathcal{F}, \vec{q})$ is at least $1/2$. By weak duality, $V_f(\mathcal{F}, \vec{q})\geq 1/2$. Hence, $V(\mathcal{F},\vec{q})\geq 1/2$, as desired.
\qed

\section{Proof of Theorem~\ref{theorem: General Stochastic Perfect Matching Improved}}
\label{section: SBM proof}

In our proof we use the following approximation of falling factorials. 
\begin{lemma}
\label{lemma: Stirling Bound}
For integers $n\geq x\geq 0$ where $n\geq 1$, $(n)_x\geq (n/e)^x$.
\end{lemma}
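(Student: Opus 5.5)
The plan is to compare $(n)_x=\prod_{i=0}^{x-1}(n-i)$ with $n^x/x!$ and $e^{-x}$. Since $(n)_x=x!\binom{n}{x}$, I will try to show $(n)_x\ge (n/e)^x$ directly by taking logarithms and bounding the sum $\sum_{i=0}^{x-1}\log(n-i)$ from below by an integral.

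First, dispose of the trivial cases. If $x=0$ both sides equal $1$. Otherwise assume $1\le x\le n$.

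For the main estimate, note that $\log$ is increasing. Hence $\log(n-i)\ge\int_{n-i-1}^{n-i}\log t\,dt$ for each $0\le i\le x-1$, and so
\[
\log (n)_x\ge \int_{n-x}^{n}\log t\,dt = \bigl[t\log t-t\bigr]_{n-x}^{n} = n\log n-(n-x)\log(n-x)-x .
\]
(When $x=n$ the integrand's antiderivative at $0$ is taken as the limit $0$, which is fine since $t\log t\to 0$.)

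It remains to check that
\[
n\log n-(n-x)\log(n-x)\ge x\log n,
\]
which is equivalent to $(n-x)\log n\ge (n-x)\log(n-x)$. This holds because $n-x\ge 0$ and $\log n\ge\log(n-x)$, with the case $x=n$ giving $0\ge 0$.

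Combining the two displays gives $\log(n)_x\ge x\log n-x$, i.e.\ $(n)_x\ge (n/e)^x$. The only delicate point is the endpoint $x=n$, where $\log 0$ appears. There one can alternatively use the standard bound $n!\ge (n/e)^n$, which follows from $e^n=\sum_k n^k/k!\ge n^n/n!$. I expect no real obstacle.
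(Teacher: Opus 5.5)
Your proof is correct, but it takes a different route from the paper's.

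\textbf{The paper's route.} It argues multiplicatively. It writes $(n)_x=x!\binom{n}{x}$ and bounds $x!\ge (x/e)^x$ using a cited two-sided form of Stirling's formula. It then bounds $\binom{n}{x}=\prod_{j=1}^{x}\frac{n-(j-1)}{x-(j-1)}\ge (n/x)^x$ term by term, since each ratio is at least $n/x$ when $n\ge x$.

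\textbf{Your route.} You compare $\log (n)_x$ with $\int_{n-x}^{n}\log t\,dt$, which is valid because $\log$ is increasing. The improper integral at $t=0$ when $x=n$ causes no trouble: the only affected term is $\log 1=0\ge\int_0^1\log t\,dt=-1$.

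\textbf{What each approach buys.} Your argument is self-contained: its case $x=n$ is itself $n!\ge (n/e)^n$, so no appeal to Stirling is needed. Before you discard the factor $(n/(n-x))^{n-x}\ge 1$, it also gives the slightly stronger bound $(n)_x\ge n^n e^{-x}/(n-x)^{n-x}$. The paper's argument avoids calculus and the $\log 0$ endpoint altogether, reducing the lemma to the one-variable inequality $x!\ge (x/e)^x$. For that inequality, the elementary bound $e^x\ge x^x/x!$ you mention would already suffice in place of the cited Stirling estimate.
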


\begin{proof}
We may assume $x\geq 1$ since $(n)_x\geq (n/e)^x$ trivially holds if $x = 0$. By~\cite{stirlingRemark}, we have $e^{1/(12n+1)} < n!/(\sqrt{2\pi n} (n/e)^n) < e^{1/(12n)}$ for all $n\geq 1$ and so $n!\geq (n/e)^n$ for all $n\geq 1$. It follows that
\[
    (n)_{x} = \binom{n}{x}x!\geq\left(\frac{x}{e}\right)^{x}\prod\limits_{j = 1}^{x}\frac{n-(j-1)}{x-(j-1)}\geq\left(\frac{x}{e}\right)^{x}\left(\frac{n}{x}\right)^{x} = \left(\frac{n}{e}\right)^{x},
\]
using the fact that each term in the product is at least $n/x$.
\end{proof}

Choose $\alpha = \omega(1)$ such that $\mathcal{G}_{\texttt{PM}}^\alpha(\vec{U}, P)$ has a perfect matching $H$ and for each $1\leq y,z\leq k$, define the indicator variables $\mathcal{I}(y,z) = \mathbbm{1}\left\{P(y,z)\geq\alpha\log n/\max(n_y, n_z)\right\}$.
We say $\sigma\in S_n$ is a \vocab{block invariant permutation} if $\sigma(U_i) = U_i$ for all $1\leq i\leq k$. Let $\pi$ be a uniform distribution over all block invariant permutations in $S_n$. Let $\vec{q}\in (0,1)^{\binom{[n]}{2}}$ be defined as follows: for each $uv\in \binom{[n]}{2}$, let $q_{uv} = \frac{30}{\max(n_y,n_z)}\cdot\mathcal{I}(y,z)$ where $y,z\in\{1,2,\ldots,k\}$ such that $u\in U_y$ and $v\in U_z$. We show the following claim and then apply Theorem~\ref{Analogue of Observation 7.5} to get the desired result.
Recall that $\mathcal{K}_n^2$ denotes the set of edges in the complete graph on $n$ vertices.

\begin{claim}
\label{claim: PM Spread Inequality}
Let $\sigma$ be sampled according to the distribution $\pi$. Then
\begin{center}
    $\prob{\sigma(S)\subseteq H}\leq 2\displaystyle\prod\limits_{s\in S} q_s$
\end{center}
for all $S\subseteq\mathcal{K}_n^2$ isomorphic to a subhypergraph of $H$.
\end{claim}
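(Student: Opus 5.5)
Fix $S\subseteq\mathcal{K}_n^2$ isomorphic to a subgraph of $H$; since $H$ is a perfect matching, $S$ is a matching, say with edges $s_1,\ldots,s_m$. The plan is to compute $\prob{\sigma(S)\subseteq H}$ exactly by exploiting block invariance, and then compare it with $\prod_s q_s$ block pair by block pair.

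First, a reduction. For $\sigma(S)\subseteq H$ we need every $\sigma(s_i)$ to be an edge of $H$. Since $\sigma$ preserves blocks, an edge $s=uv$ with $u\in U_y$, $v\in U_z$ is mapped to a pair in $U_y\times U_z$. So if $\sigma(s)\in H$, then $H$ contains an edge between $U_y$ and $U_z$. Because $H\subseteq\mathfrak{G}_{\texttt{PM}}^\alpha(\vec U,P)$, this forces $\mathcal{I}(y,z)=1$. Consequently, if some $s\in S$ has $\mathcal{I}=0$, then the probability is $0$ and the inequality is trivial. We may therefore assume every $s\in S$ has $q_s=30/\max(n_y,n_z)$.

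Next, group the edges of $S$ by unordered block pair $\{y,z\}$, and let $m_{yz}$ be the number of such edges. Let $h_{yz}$ be the number of edges of $H$ between $U_y$ and $U_z$ (or inside $U_y$ when $y=z$). A uniform block-invariant permutation is a product of independent uniform permutations $\sigma_i$ of each $U_i$. The event $\sigma(S)\subseteq H$ means the $2m$ endpoints of $S$ are mapped injectively so that each edge lands on an edge of $H$ of the matching type.

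Counting for a fixed vertex ordering gives the bound
\[
\prob{\sigma(S)\subseteq H}\le \prod_{\{y,z\}} \frac{(h_{yz})_{m_{yz}}\cdot c_{yz}}{\text{(falling factorials of block sizes)}},
\]
where $c_{yz}\in\{1,2^{m_{yz}}\}$ accounts for the two orientations within a block pair. The total denominator is $\prod_i (n_i)_{a_i}$, where $a_i$ is the number of endpoints of $S$ lying in $U_i$. To make this rigorous, I would expose the images edge by edge. Conditioned on earlier edges being mapped correctly, the $j$-th edge of type $\{y,z\}$ has at most $h_{yz}$ target edges (times $2$ if $y=z$), and its probability of hitting a specific one is $1/((n_y-\cdot)(n_z-\cdot))$. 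Thus the conditional probability is at most $2h_{yz}/((n_y-t_y)(n_z-t_z))$, where $t_y,t_z$ count the already-used vertices.

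Now bound $h_{yz}\le\min(n_y,n_z)$, since $H$ is a matching. Multiplying over edges, the numerator is at most $\prod \min(n_y,n_z)\cdot 2$ per edge. For the denominator, Lemma~\ref{lemma: Stirling Bound} gives $\prod_i(n_i)_{a_i}\ge\prod_i (n_i/e)^{a_i}$. Each edge therefore contributes at most
\[
\frac{2e^2\min(n_y,n_z)}{n_yn_z}=\frac{2e^2}{\max(n_y,n_z)}\le \frac{30}{\max(n_y,n_z)}=q_s,
\]
since $2e^2\approx 14.8<30$. This yields $\prob{\sigma(S)\subseteq H}\le\prod_s q_s\le 2\prod_s q_s$.

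The main obstacle is the denominator step: the sequential conditioning produces decreasing factors $(n_y-t_y)$, which must be collected into falling factorials per block before Lemma~\ref{lemma: Stirling Bound} can be applied. Inside a block with $y=z$ each edge uses two vertices of the same block, so the exact count is $(n_y)_{2m_{yy}}$ with $(h_{yy})_{m_{yy}}2^{m_{yy}}$ in the numerator. I would handle this by writing the exact probability as a product over blocks of a numerator count divided by $(n_i)_{a_i}$, rather than by conditioning, and bounding the numerator by $\prod_{\{y,z\}}(2\min(n_y,n_z))^{m_{yz}}$. The constant $30$ leaves room for the $e^2$ and the factor $2$.
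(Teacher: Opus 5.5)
Your proof is correct and follows essentially the paper's argument: write the probability exactly as $\prod_{\{y,z\}}(h_{yz})_{m_{yz}}c_{yz}\big/\prod_i (n_i)_{a_i}$, bound $h_{yz}\le\min(n_y,n_z)$, and apply Lemma~\ref{lemma: Stirling Bound} to get a per-edge factor at most $2e^2/\max(n_y,n_z)<30/\max(n_y,n_z)$. Your version is slightly more careful than the paper's in two places. You include the orientation factor $2^{m_{yy}}$ for edges inside a single block, which the paper's exact formula omits (harmlessly, since $2e^2<30$). You also make explicit the reduction to $\mathcal{I}(y,z)=1$ via $H\subseteq\mathfrak{G}_{\texttt{PM}}^\alpha(\vec{U},P)$, which the paper leaves implicit.
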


\begin{proof}
Let $H_{ij}$ be the set of edges in $H$ with one end in $U_i$ and the other end in $U_j$, for all $1\le i\le j\le k$.  Let $S\subseteq\mathcal{K}_n^2$ be isomorphic to a subgraph of $H$, and so, $S$ is a set of vertex-disjoint edges. Similarly, let $S_{ij}$ be the set of edges in $S$ with one end in $U_i$ and the other hand in $U_j$, for all $1\le i\le j\le k$. Letting $s_{ij} = \abs{S_{ij}}$ and $h_{ij} = \abs{H_{ij}}$, we may assume that $s_{ij}\leq h_{ij}$ for all $1\leq i,j\leq k$ since $\prob{\sigma(S)\subseteq H} = 0$ otherwise. Let $f(S,H)$ denote the number of block-invariant permutations $\sigma$ for which $\sigma(S)\subseteq H$. Therefore,\begin{equation}\label{eqn:Permutation Ratio for Stirling}
    \prob{\sigma(S)\subseteq H} = \displaystyle\frac{f(S,H)}{\prod\limits_{i = 1}^{k} n_i!}.
\end{equation}
To estimate $f(S,H)$, first note that for each $1\leq i,j\leq k$, there are $(h_{i,j})_{s_{i,j}}$ choices for $\sigma$ such that
$\sigma(S_{i,j})\subseteq H_{i,j}$.
Each such choice specifies the image of $V(S)$ under $\sigma$, where $V(S)$ is the set of vertices incident with $S$.
Then, to complete a specification of $\sigma$,
there are $(n_i - \zeta(i))!$ ways to permute the unmapped vertices in each $U_i$, where $\zeta(i):= 2s_{i,i} + \sum\limits_{j\neq i} s_{i,j}$ for each $1\leq i\leq k$. It follows that
\begin{center}
    $\prob{\sigma(S)\subseteq H} = \displaystyle\frac{\prod\limits_{j\geq i} (h_{i,j})_{s_{i,j}}\prod\limits_{i = 1}^{k} (n_i - \zeta(i))!}{\prod\limits_{i = 1}^{k} n_i!} = \frac{\prod\limits_{j\geq i} (h_{i,j})_{s_{i,j}}}{\prod\limits_{i = 1}^{k} (n_i)_{\zeta(i)}}$.
\end{center}

Since $h_{i,j}\leq\min(n_i,n_j)$, we get that $\left(h_{i,j}\right)_{s_{i,j}}\leq\min(n_i,n_j)^{s_{i,j}}$ for all $1\leq i,j\leq k$. Using Lemma~\ref{lemma: Stirling Bound}, we have
\[
    \prob{\sigma(S)\subseteq H}\leq\left[\prod\limits_{j\geq i} \min(n_i,n_j)^{s_{i,j}}\right]\prod\limits_{i = 1}^{k}  \left(\frac{e}{n_i}\right)^{\zeta(i)} = \prod\limits_{j\geq i} \left[\min(n_i,n_j)^{s_{i,j}}\left(\frac{e}{n_i}\cdot\frac{e}{n_j}\right)^{s_{i,j}}\right].
\]
Therefore, we get
\begin{center}
    $\prob{\sigma(S)\subseteq H}\leq\displaystyle\prod\limits_{j\geq i} \left(\frac{e^2}{\max(n_i,n_j)}\right)^{s_{i,j}} < \prod\limits_{j\geq i} \prod\limits_{s\in S_{i,j}} \frac{30}{\max(n_i,n_j)}$
\end{center}
since $e^2 < 30$. Hence, the desired inequality holds.
\end{proof}

Now, we will use the result of Claim~\ref{claim: PM Spread Inequality} to finish the proof of Theorem~\ref{theorem: General Stochastic Perfect Matching Improved}. 
Define $\vec{p}\in [0,1]^X$ (recall $X = \binom{[n]}{2}$) as follows. For every $uv\in X$, let $p_{uv} = P(y,z)$ where $y,z\in\{1,\ldots,k\}$ are such that $u\in U_y$ and $v\in U_z$. 
Then, by the assumption of the theorem and the definitions of $\vec{q}$ and $T_{\ell}$ from~\ref{def:T},
\begin{center}
    $p_{uv} \geq \alpha\log n\cdot\displaystyle\frac{30}{\max(n_y, n_z)}\cdot\mathcal{I}(y,z) = \alpha q_{uv}\log n \geq \alpha T_{n/2}(\vec{q})_{uv}$,
\end{center}
where the last inequality above is tight up to a constant factor. Therefore $\vec{p}\geq \alpha T_{n/2}(\vec{q})$ which implies that $\mu_{\vec{p}}(\mathcal{F}_H)\rightarrow 1$ by Theorem~\ref{theorem: Non-Uniform Friedgut} and Theorem~\ref{Analogue of Observation 7.5} because $\abs{E(H)} = \frac{n}{2}$. Hence $\mathcal{B}(\vec{U}, P)$ a.a.s.\ has a perfect matching, as desired.\hfill \qedsymbol

\section{Perfect matchings in the Chung-Lu model}
\label{section: Proofs for Chung-Lu}

\subsection{Proof of Theorem~\ref{theorem: Phase Transition for PM in Bi-Valued Families}}
\label{section: Proof for Bi-Valued Chung-Lu}

\noindent Recall that $U_1 = \{v_x: d_x = n_1\}$ and let $U_2 = \{v_x: d_x = n_2\}$. 
Recall that the families $\mathcal{D}_1$ and $\mathcal{D}_2$ were defined as follows.
\begin{itemize}
    \item $\vec{d}\in\mathcal{D}_1$ if $n_1\geq n_2 \ge n^{\Theta(1)}$;
    \item $\vec{d}\in\mathcal{D}_2$ if $n_1\leq n^{\delta}$ for some fixed $\delta < 1$.
\end{itemize}

Our goal is to show that for $\vec{d}\in\mathcal{D}_1\cup\mathcal{D}_2$, if $\mathfrak{G}_{\texttt{PM}}^{\alpha}\left(\vec{U}_{\vec{d}}, P_{\vec{d}}\right)$ does not have a perfect matching for some $\alpha = o(1)$, then $\mathcal{G}_{\text{CL}}(n, \vec{d})$ a.a.s.\ does not have a perfect matching. For bi-valued degree sequences $\vec{d}$, we have the following straightforward observation. 
\begin{lemma}
\label{lemma: bi-valued PM equivalence}
$\mathfrak{G}_{\texttt{PM}}^{\alpha}\left(\vec{U}_{\vec{d}}, P_{\vec{d}}\right)$ has a perfect matching if and only if 
\begin{center}
    $\displaystyle\frac{d_x d_2}{\norm{\vec{d}}_1}\geq \frac{\alpha\log n}{n_x}$
\end{center}
where $x\in\{1,2\}$ is such that $n_x = \max\{n_1, n_2\}$, and $x$ is defined to be 1 if $n_1=n_2$.
\end{lemma}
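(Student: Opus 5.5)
The plan is a direct case analysis on the structure of $\mathfrak{G}_{\texttt{PM}}^{\alpha}(\vec{U}_{\vec{d}}, P_{\vec{d}})$ in the bi-valued case. Write $p_{ij} = \min\{1, d_id_j/\norm{\vec{d}}_1\}$ for the three edge probabilities. The graph is then determined by three indicators: $U_1$ is a clique iff $p_{11}\ge \alpha\log n/n_1$, $U_2$ is a clique iff $p_{22}\ge\alpha\log n/n_2$, and $U_1$--$U_2$ is complete bipartite iff $p_{12}\ge \alpha\log n/\max(n_1,n_2)$; there are no other edges. The proof consists of two comparisons of these thresholds, followed by a short case analysis.

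First, let $x$ be the index of the larger block (with $x=1$ on ties), and let $y$ be the other index. The edge class $U_x$--$U_x$ has probability $p_{xx}=d_x^2/\norm{\vec d}_1$ (up to the $\min$ with $1$), compared against the threshold $\alpha\log n/n_x$. The cross class has probability $d_1d_2/\norm{\vec d}_1$, compared against the same threshold $\alpha\log n/n_x$. The class $U_y$--$U_y$ has probability $d_y^2/\norm{\vec d}_1$, compared against $\alpha\log n/n_y\ge \alpha\log n/n_x$.

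Since $d_1>d_2$, the product $d_xd_2$ is the minimum of $d_x^2$ and $d_1d_2$: if $x=1$ it equals $d_1d_2\le d_1^2$, and if $x=2$ it equals $d_2^2\le d_1d_2$. Hence the stated inequality $d_xd_2/\norm{\vec d}_1\ge\alpha\log n/n_x$ holds if and only if both $U_x$ is a clique and $U_1$--$U_2$ is complete bipartite (taking care that the $\min\{1,\cdot\}$ truncation is harmless, since $1\ge$ threshold whenever the threshold is at most $1$, and otherwise neither side holds).

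For the ``if'' direction, if both edge classes are present, a perfect matching exists. Match all of $U_y$ into $U_x$ via the complete bipartite part; this uses $n_y\le n_x$ vertices of $U_x$. Then pair up the remaining $n_x-n_y$ vertices inside the clique $U_x$. The parity $n_x-n_y\equiv n\pmod 2$ works out because $n$ is even, which is implicitly assumed for a perfect matching to exist at all.

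For the ``only if'' direction, suppose the graph has a perfect matching. I must show that both $U_x$ is a clique and the cross edges are present. Each of these fails in one case.

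If the cross edges are absent, the blocks must be matched internally, which forces both $U_1$ and $U_2$ to be cliques with even sizes. In particular $p_{22}$ passes its threshold: $d_2^2/\norm{\vec d}_1\ge\alpha\log n/n_2$. But $d_1d_2\ge d_2^2$, and we need $d_1d_2/\norm{\vec d}_1\ge \alpha\log n/n_x$, where $n_x\ge n_2$. This follows, a contradiction, so the cross edges are in fact present.

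If $U_x$ is not a clique, every vertex of $U_x$ must be matched into $U_y$, forcing $n_x\le n_y$. Thus $n_1=n_2$, so $x=1$, and $U_1$ not being a clique means $d_1^2<\alpha\log n\norm{\vec d}_1/n_1$. But $d_1^2>d_1d_2$, and the cross edges exist only if $d_1d_2$ passes the same threshold. Then $d_1^2$ passes it too, a contradiction.

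The main obstacle is bookkeeping rather than depth. The delicate points are the tie convention $x=1$ when $n_1=n_2$, the $\min\{1,\cdot\}$ truncation, and the parity of $n_x-n_y$. Each is handled by the monotone comparisons above.
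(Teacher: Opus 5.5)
Your proof is correct and follows the paper's argument. The ``if'' direction is identical: since $d_1>d_2$, the inequality makes every vertex of $U_x$ adjacent to all other vertices, so you match $U_y$ into $U_x$ and pair off the rest of $U_x$. The ``only if'' direction uses the same counting/isolation reasoning, organized by which edge class fails rather than by the value of $x$. The paper only checks the one binding condition in each case: internal edges of $U_2$ when $x=2$, and cross edges when $x=1$, where a failure would isolate $U_2$.
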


\begin{proof}
First, suppose $d_x d_2/\norm{\vec{d}}_1\geq\alpha\log n/n_x$ and let $y = 3-x$ (i.e., $n_y = \min\{n_1, n_2\}$). Note that
\begin{center}
    $\displaystyle\frac{d_1 d_2}{\norm{\vec{d}}_1}\geq\frac{d_x d_2}{\norm{\vec{d}}_1}\geq\frac{\alpha\log n}{\max(n_1, n_2)}$
\end{center}
and 
\begin{center}
    $\displaystyle\frac{d_x^2}{\norm{\vec{d}}_1}\geq\frac{d_x d_2}{\norm{\vec{d}}_1}\geq\frac{\alpha\log n}{\max(n_x, n_x)}$. 
\end{center}
By the definition of $\mathfrak{G}_{\texttt{PM}}^{\alpha}\left(\vec{U}_{\vec{d}}, P_{\vec{d}}\right)$, every vertex of $U_x$ is adjacent to every other vertex in the graph (i.e. they each have $n-1$ neighbours). We can construct a perfect matching of $\mathfrak{G}_{\texttt{PM}}^{\alpha}\left(\vec{U}_{\vec{d}}, P_{\vec{d}}\right)$ by matching all vertices in $U_y$ with vertices in $U_x$, and then matching the remaining vertices in $U_x$ among themselves.

Now, suppose that $\mathfrak{G}_{\texttt{PM}}^{\alpha}\left(\vec{U}_{\vec{d}}, P_{\vec{d}}\right)$ has a perfect matching. If $x = 2$, then it follows that $d_2^2/\norm{\vec{d}}_1\geq\alpha\log n/n_2$, since otherwise, there will be no edges between any two vertices in $U_2$, and consequently $\mathfrak{G}_{\texttt{PM}}^{\alpha}\left(\vec{U}_{\vec{d}}, P_{\vec{d}}\right)$ cannot have a perfect matching because $\abs{U_2}>\abs{U_1}$, leading to a contradiction. If $x = 1$, then suppose for the sake of contradiction that $d_1d_2/\norm{\vec{d}}_1 < \alpha\log n/n_1$. This implies that $d_2^2/\norm{\vec{d}}_1 < \alpha\log n/n_2$.  Hence, all vertices in $U_2$ are isolated, contradicting with $\mathfrak{G}_{\texttt{PM}}^{\alpha}\left(\vec{U}_{\vec{d}}, P_{\vec{d}}\right)$ having a perfect matching. 
\end{proof}

By Lemma~\ref{lemma: bi-valued PM equivalence}, it suffices to show that if $\vec{d}\in\mathcal{D}_1\cup\mathcal{D}_2$ is such that $d_x d_2/\norm{\vec{d}}_1\ll\log n/n$, then a.a.s.\ $\mathcal{G}_{\text{CL}}(n,\vec{d})$ has no perfect matching. We prove the statement separately for $\vec{d}\in\mathcal{D}_1$, and $\vec{d}\in\mathcal{D}_2$. 

\subsubsection{Case 1: \pdfmath{\vec{d}\in\mathcal{D}_1}}
In this case $x=1$.
Let $p = d_1 d_2/\norm{\vec{d}}_1\ll\log n/n$ and let $q = d_2^2/\norm{\vec{d}}_1\leq p$. Let $X$ be the number of isolated vertices in $U_2$. We show that a.a.s.\ $X>0$ which implies that a.a.s.\ $\G_{\text{CL}}(n,\vec{d})$ has no perfect matching.
\[
\ex{X}= n_2 \left(1-q\right)^{n_2-1}\left(1-p\right)^{n_1} > n_2\left(1-p\right)^n = n_2\exp(-np+O(np^2))=\omega(1), 
\]
where the last equality holds since $p = o(\log n/n)$ and $n_2\ge n^{\Theta(1)}$. Moreover,
\[
\ex{ X(X-1)}= n_2\left(n_2 - 1\right) \left(1-q\right)^{n_2 - 1}\left(1-p\right)^{n_1} \left(1-q\right)^{n_2 - 2}\left(1-p\right)^{n_1}\sim (\ex X)^2.
\]
It follows that a.a.s.\ $X>0$ by Chebyshev's inequality.

\subsubsection{Case 2: \pdfmath{\vec{d}\in\mathcal{D}_2}}
In this case $x=2$ and $d_2^2/\norm{\vec{d}}_1\ll\log n/n$. Let $X$ be the number of vertices in $U_2$ that are not adjacent to any other vertices in $U_2$. We show that a.a.s.\ $X>n_1$, which then immediately implies that a.a.s.\ $\mathcal{G}_{\text{CL}}(n,\vec{d})$ has no perfect matchings.
\[
\ex X = n_2\displaystyle\left(1 - \frac{d_2^2}{\norm{\vec{d}}_1}\right)^{n_2-1} = \exp\left(\log n -n_2 \frac{d_2^2}{\norm{\vec{d}}_1} +O\left(1+\frac{d_2^4n}{\norm{\vec{d}}_1^2}\right)\right) =n^{1-o(1)},
\]
where the last equality above holds since $d_2^2/\norm{\vec{d}}_1\ll\log n/n$ and $n_2\sim n$.
Moreover,
\[
\ex X(X-1)=n_2\left(n_2 - 1\right)\left(1-\frac{d_2^2}{\norm{\vec{d}}_1}\right)^{2n_2-3} \sim (\ex X)^2.
\]
By Chebyshev's inequality, a.a.s.\ $X=n^{1-o(1)}>n_1$, as desired.  \qed

\subsection{Proof of Example~\ref{example: 0-statement Chung-Lu k-valued}}
\label{section: Proof of k-valued Chung-Lu}

Choose $\alpha = o(1)$ such that $\mathfrak{G}_{\texttt{PM}}^{\alpha}\left(\vec{U}_{\vec{d}}, P_{\vec{d}}\right)$ has no perfect matchings. Note that $ \max(n_1, n_x)=n_1$ for all $1\leq x\leq k$. Furthermore, note that $d_1 d_k/\norm{\vec{d}}_1 < \alpha\log n/n_1$, since otherwise, $d_1 d_x/\norm{\vec{d}}_1\geq\alpha\log n/\max(n_1, n_x)$ for all $1\leq x\leq k$ and consequently we can form a perfect matching by matching each vertex in $[n]\setminus U_1$ to a vertex in $U_1$ and then matching up the remaining vertices in $U_1$. This also implies that $d_x d_k/\norm{\vec{d}}_1 < \alpha\log n/n_1$ for all $1\leq x\leq k$. Let $p = \alpha\log n/n_1 = o(\log n/n)$ since $n_1 = \Theta(n)$.

Let $X$ be the number of isolated vertices in $U_k$. We show that a.a.s.\ $X > 0$ which implies that a.a.s.\ $\mathcal{G}_{\text{CL}}(n,\vec{d})$ has no perfect matching. It is easy to see that
\[
\ex X = n_k\prod\limits_{x = 1}^{k} \left(1 - \frac{d_x d_k}{\norm{\vec{d}}_1}\right)^{n_x-\mathbbm{1}\{x = k\}} > n_k\left(1-p\right)^n = n_k\exp(-np + O(np^2)) = \omega(1),
\]
where the last equality holds since $p = o(\log n/n)$ and $n_k \ge n^{\delta}$. As before, it is easy to see that $\ex X(X-1)\sim (\ex X)^2$ and so it follows that a.a.s.\ $X > 0$ by Chebyshev's inequality.
\qed

\section{Perfect matchings in \pdfmath{\G(n,\vec{d})}}
\label{section: Proofs for G(n,d)}

We apply Theorem 1.7 from~\cite{Gao_2024}, which is restated below, that allows us to embed the Chung-Lu model within $\mathcal{G}(n,\vec{d})$.

\begin{theorem}[Gao-Ohapkin~\cite{Gao_2024}]
\label{theorem: Jane's G(n,d) embedding}
Given $\vec{d} = (d_1,\ldots,d_n)$ with $d_1\geq\cdots\geq d_n$, let $P(\vec{d})$ be the symmetric $n\times n$ matrix defined by $P_{ij} = P_{ji} = d_i d_j/\norm{\vec{d}}_1$, for every $1\leq i < j\leq n$, and $P_{ii} = 0$ for every $1\leq i\leq n$. Assume that $\vec{d} = \vec{d}(n)$ is a degree sequence that satisfies $d_1^2 = o(\norm{\vec{d}}_1)$ and $d_n\gg\log n$. Then, there exists $\varepsilon = o(1)$ and a coupling $(G_L, G)$ such that marginally, $G_L\sim\mathcal{G}(n, (1-\varepsilon)P(\vec{d}))$ and $G\sim\mathcal{G}(n,\vec{d})$, and jointly, $\prob{G_L\subseteq G} = 1-o(1)$. 
\end{theorem}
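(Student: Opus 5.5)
The statement is Theorem~1.7 of~\cite{Gao_2024}, so here is how I would reprove it. The approach is to realise $\mathcal{G}(n,\vec{d})$ as the final outcome of a sequential edge-exposure process. In parallel I would run an independent-edge process with the target probabilities $(1-\varepsilon)P(\vec{d})$, and couple the two step by step so that every edge produced by the independent process is also an edge of $G$. Write $M=\norm{\vec{d}}_1$.

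\textbf{Exposure process.} Sample $G\sim\mathcal{G}(n,\vec{d})$ together with a uniformly random ordering $e_1,\dots,e_{M/2}$ of its edges. Let $H_t=\{e_1,\dots,e_t\}$. Given $H_t$, the edge $e_{t+1}$ is uniform among the edges of $G\setminus H_t$. Moreover, $G\setminus H_t$ is a uniform graph with residual degree sequence $\vec{d}-\deg_{H_t}$ that avoids $H_t$.

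\textbf{Key estimate.} By a switching argument, which is the standard tool and is valid because $d_1^2=o(M)$, I would show that for every pair $ij\notin H_t$ and every $t\le (1-\varepsilon_0)M/2$,
\[
\prob{e_{t+1}=ij\mid H_t}=(1+o(1))\,\frac{(d_i-\deg_{H_t}(i))(d_j-\deg_{H_t}(j))}{(M-2t)^2/2}\cdot\frac{1}{1},
\]
up to the correct normalisation; that is, the next edge is chosen approximately proportionally to the product of residual degrees. The switchings move one edge at a time. The error terms are $O(d_1^2/M)$ plus terms controlled by the residual maximum degree over the residual total, and these stay $o(1)$ while a constant fraction of $M$ remains.

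\textbf{Concentration of residual degrees.} Next I would show that a.a.s., for all $i$ and all $t\le(1-\varepsilon_0)M/2$,
\[
d_i-\deg_{H_t}(i)=(1+o(1))\,d_i\,(1-2t/M).
\]
This follows from a martingale (Freedman) argument using the approximate selection rule above. The hypothesis $d_n\gg\log n$ is exactly what makes the deviations $o(d_i)$ for every vertex simultaneously via a union bound. Plugging this back into the key estimate, each step selects $ij$ with probability $(1+o(1))\,2d_id_j/M^2$, essentially independently of $t$.

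\textbf{Coupling.} Run a continuous-time version in which the independent graph $G_L$ is built by letting each pair $ij$ carry an independent clock of rate $\approx 2d_id_j/M^2$ per step. Over $(1-\varepsilon_0)M/2$ steps, each pair is hit with probability $\approx 1-e^{-(1-\varepsilon_0)d_id_j/M}$, which is at least $(1-\varepsilon)d_id_j/M$ because $d_id_j/M=o(1)$. At each step, the law of $e_{t+1}$ dominates $(1-o(1))$ times the rate law of the clocks, so a maximal coupling makes each ring of the clock process coincide with the next exposed edge except with probability $o(1)$ per step. The steps at which the coupling fails would be absorbed by thinning: the ring is discarded, which costs the $(1-\varepsilon)$ factor. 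Rings on pairs already in $H_t$ are harmless. The result is $G_L\subseteq H_{(1-\varepsilon_0)M/2}\subseteq G$ on the good event, with $G_L$ having independent edges with probabilities at least $(1-\varepsilon)P_{ij}$; a further independent thinning makes these probabilities exact.

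\textbf{Main obstacle.} I expect the switching estimate to be the hardest step. It must hold uniformly over all pairs and over all typical histories $H_t$, including when conditioning on the forbidden set $H_t$, and it is needed with relative error $o(1)$ rather than merely up to constants. I would first attempt the simple forward/backward switching count, using the residual-degree concentration as an inductive hypothesis along the process.
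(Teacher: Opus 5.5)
The paper does not prove this statement. It is quoted from \cite{Gao_2024} and used as a black box, so there is no in-paper argument to compare yours with. Judged on its own, your outline is the standard sequential-exposure route to embedding theorems of this kind: expose the edges of $G$ in uniformly random order, use a switching estimate making the next edge roughly proportional to the product of residual degrees, show residual degrees concentrate, and couple step by step with an i.i.d.\ stream of pairs of law $\lambda(ij)\propto d_id_j$. It is sound in outline, but four steps must be stated differently to actually yield the theorem.

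\textbf{(1) Letting $\varepsilon_0\to0$.} With $\varepsilon_0$ fixed you only get edge probabilities $(1-\varepsilon_0-o(1))P_{ij}$, which does not give $\varepsilon=o(1)$. You must take $\varepsilon_0=\varepsilon_0(n)\to0$ slowly, or diagonalise over fixed $\varepsilon_0$. The switching error is then $O(d_1^2/(M-2t))=O(d_1^2/(\varepsilon_0 M))$, and concentration needs $\varepsilon_0^2 d_n\gg\log n$. This is exactly where the two hypotheses enter.

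\textbf{(2) Residual degrees.} No martingale is needed. Given $G$, the exposure order is uniform, so $\deg_{H_t}(i)$ is exactly hypergeometric with parameters $(M/2,d_i,t)$. Chernoff plus a union bound over $i$ and $t$ suffices.

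\textbf{(3) The coupling.} ``Maximal coupling, and discard the ring when it fails'' is not an independent thinning: the failure event depends on which pair rang. The retained rings would then not be i.i.d., and $G_L$ would not be a product measure. Instead, use the pointwise bound
$\prob{e_{t+1}=ij\mid H_t}\ge(1-\eta)\lambda(ij)/(1-\lambda(H_t))$ for $ij\notin H_t$, noting $\lambda(H_t)=O(d_1^2/M)=o(1)$.
\begin{itemize}
\item Flip an independent $(1-\eta)$-coin at each ring.
\item On heads, set $e_{t+1}$ equal to the ring; on tails, sample $e_{t+1}$ from the normalised remainder. Rings landing in $H_t$ do not advance $t$.
\item Every step consumes at least one ring, so all of the first $(1-\varepsilon_0)M/2$ rings are processed.
\item Poissonising the number of kept rings gives independent edges; a final independent thinning makes the probabilities exactly $(1-\varepsilon)P_{ij}$.
\item On the $o(1)$ event where concentration fails, decouple and sample both processes from their true laws.
\end{itemize}

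\textbf{(4) The switching estimate.} Write $h_i=\deg_{H_t}(i)$. The backward switching count is not uniformly close to $(d_i-h_i)(d_j-h_j)$. In particular graphs there can be many common neighbours of $i,j$, or many pairs $a\in N(i)$, $b\in N(j)$ with $ab\in G$ (including $ab\in H_t$), especially when residual degrees are small. You must average over the conditional space, using crude a~priori switching bounds on one- and two-edge probabilities. For instance, $\sum_{ab\in H_t}(d_a-h_a)(d_b-h_b)\le d_1^2(M-2t)$ controls the $H_t$ term. This gives relative error $O(d_1^2/(M-2t))$ for \emph{every} history, not just typical ones. So residual-degree concentration is not needed as an inductive hypothesis inside the switching step. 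It is used only afterwards, to replace $(d_i-h_i)(d_j-h_j)$ by $d_id_j(1-2t/M)^2$.
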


For convenience, we say a.a.s.\ $\mathcal{G}(n, (1-\varepsilon) P(\vec{d}))\subseteq\mathcal{G}(n,\vec{d})$, meaning that there exists a coupling as described in Theorem~\ref{theorem: Jane's G(n,d) embedding} (and use analogous notation for other random graph models).

\subsection{Proof of Theorem~\ref{theorem: G(n,d) 1-statement}}
\label{section: proof of G(n,d) 1-statement}

Let $\varepsilon=o(1)$ be chosen so that Theorem~\ref{theorem: Jane's G(n,d) embedding} holds.
Let $\alpha = \omega(1)$ be such that $\mathfrak{G}_{\texttt{PM}}^{\alpha}(\vec{U}_{\vec{d}}, P_{\vec{d}})$ has a perfect matching. Letting $\alpha' = 2\alpha/(1-\varepsilon)$ and $\vec{d}' = ((1-\varepsilon)/2)  \vec{d}$, note that $\mathfrak{G}_{\texttt{PM}}^{\alpha'}(\vec{U}_{\vec{d}'}, P_{\vec{d}'})=\mathfrak{G}_{\texttt{PM}}^{\alpha}(\vec{U}_{\vec{d}}, P_{\vec{d}})$ (to see this, note that $d_i d_j/\norm{\vec{d}}_1\geq\alpha\log n/\max(n_x, n_y)$ if and only if $d'_i d'_j/\norm{\vec{d}'}_1\geq\alpha'\log n/\max(n_x, n_y)$, where $v_i\in U_x$ and $v_j\in U_y$). It follows immediately that $\mathfrak{G}_{\texttt{PM}}^{\alpha'}(\vec{U}_{\vec{d}'}, P_{\vec{d}'})$ has a perfect matching, and since $\alpha' = \omega(1)$, it follows that $\mathcal{G}_{\text{CL}}(n,\vec{d}')$ has a perfect matching a.a.s.\ by Theorem~\ref{theorem: General Stochastic Perfect Matching Improved}. Finally, by Theorem~\ref{theorem: Jane's G(n,d) embedding}, there exists a coupling such that a.a.s.\ $\mathcal{G}_{\text{CL}}(n,\vec{d}')\subseteq\mathcal{G}(n,\vec{d})$, which further implies that a.a.s.\ $\mathcal{G}(n,\vec{d})$ has a perfect matching, as desired. \qed

\subsection{Proof of Example~\ref{example: G(n,d) Ideal 0-statement Counterexample}}
\label{section: Proof of G(n,d) Ideal 0-statement Counterexample}

Let $K>0$ be sufficiently large.
By Lemma~\ref{lemma: bi-valued PM equivalence},  $\mathfrak{G}_{\texttt{PM}}^{\alpha}(\vec{U}_{\vec{d}}, P_{\vec{d}})$ has no perfect matching for $\alpha = Kn^{-1/4}$, since $d_2^2/\norm{\vec{d}}_1 =O( n^{1/4}\log n/n^{3/2}) \le \alpha\log n/n$. Let $\varepsilon = o(1)$ from Theorem~\ref{theorem: Jane's G(n,d) embedding} and $\vec{d}' := \frac{1-\varepsilon}{2}\vec{d}$. To complete the proof that a.a.s.\ $\G(n,\vec{d})$ has a perfect matching, it suffices to show that a.a.s.\ $\mathcal{G}_{\texttt{CL}}(n, \vec{d}')$ has a perfect matching. We first expose the subgraph induced by $U_2$. It is easy to see that a.a.s.\ $U_2$ induces at least an edge. Take an edge $uv$ in $U_2$.  Then we consider the subgraph of $\mathcal{G}_{\texttt{CL}}(n, \vec{d}')$ joining $U_1$ and $U_2\setminus \{u,v\}$ and it suffices to show that a.a.s.\ this subgraph has a perfect matching. Note that this subgraph has the distribution $\mathcal{G}(n_1,n_1,p)$ for some $p\ge 2\log n/n$ since
\begin{center}
    $\displaystyle\frac{d'_1 d'_2}{\norm{\vec{d}'}_1} = \frac{(1-\varepsilon) d_1 d_2}{2\norm{\vec{d}}_1} =\Omega\left( \frac{n^{5/8}\sqrt{\log n}}{n^{3/2}} \right)\gg\frac{\log n}{n}$.
\end{center}
By~\cite{bipartitePM}, $\mathcal{G}(n_1,n_1,p)$ a.a.s.\ has a perfect matching. It follows now that $\mathcal{G}_{\text{CL}}(n,\vec{d}')$ a.a.s.\ has a perfect matching.
\qed

\subsection{Proof of Theorem~\ref{theorem: G(n,d) bi-valued 0-statement}}
\label{section: proof of G(n,d) 0-statement}

The 1-statement of Theorem~\ref{theorem: G(n,d) bi-valued 0-statement} directly follows from Theorem~\ref{theorem: G(n,d) 1-statement} by Remark~\ref{remark: G(n,d) remark}. 
Now we show the 0-statement. We will make use of a special case of Theorem 4.6 from~\cite{McKay_asymptotics}, which we restate below.

\begin{theorem}[McKay~\cite{McKay_asymptotics}]
\label{theorem: Mckay's Deg Seq Enumeration Result}
For a sequence $\vec{d} = (d_1,\ldots,d_n)$ of positive integers $d_1\geq d_2\geq\cdots\geq d_n$, let $\hat{\Delta} = 2 + d_1(\frac{3}{2} d_1 +1)$ and $\lambda = \frac{1}{2\norm{\vec{d}}_1}\sum\limits_{i = 1}^{n} d_i(d_i-1)$.  If $\hat{\Delta} < \frac{1}{3}\norm{\vec{d}}_1$, then the number of graphs with degree sequence $\vec{d}$ is
\begin{center}
    $\displaystyle\frac{\norm{\vec{d}}_1!}{(\norm{\vec{d}}_1/2)!\hspace{1mm} 2^{\norm{\vec{d}}_1/2}\prod\limits_{i=1}^{n} d_i!}\exp\left(-\lambda - \lambda^2 + O\left(\hat{\Delta}^2/\norm{\vec{d}}_1\right)\right)$.
\end{center}
\end{theorem}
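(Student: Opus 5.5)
The plan is to use the pairing (configuration) model together with switching arguments, following McKay's original approach. Write $M=\norm{\vec{d}}_1$ and represent each vertex $i$ as a cell of $d_i$ points. The total number of pairings of the $M$ points is $M!/((M/2)!\,2^{M/2})$. Each simple graph with degree sequence $\vec{d}$ arises from exactly $\prod_i d_i!$ pairings that have no loops and no multiple edges. So the statement reduces to showing that a uniformly random pairing is simple with probability
\[
\exp\left(-\lambda-\lambda^2+O(\hat{\Delta}^2/M)\right).
\]

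First, I will show that pairings containing a triple pair, a double loop, or a loop on a vertex also incident to a double pair contribute only a relative $O(\hat{\Delta}^2/M)$. I will do this by a first-moment count: the expected number of triple pairs is $O(\sum_{i,j} d_i^3 d_j^3/M^3)=O(d_1^4/M)$, and the other configurations are bounded similarly. This leaves the classes $\mathcal{C}(l,m)$ of pairings with exactly $l$ loops and $m$ double pairs and no other defects.

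Next, I will relate neighbouring classes by switchings.
\begin{itemize}
\item A \emph{loop switching} removes one loop using two other pairs and creates no new defects. Counting forward and backward switchings gives
\[
\frac{|\mathcal{C}(l,m)|}{|\mathcal{C}(l-1,m)|}=\frac{\lambda}{l}\bigl(1+O(\hat{\Delta}(l+m+1)/M)\bigr),
\]
using that the number of ways to create a loop is $\sum_i d_i(d_i-1)/2 \approx \lambda M$ up to the stated error.
\item A \emph{double-pair switching} gives
\[
\frac{|\mathcal{C}(0,m)|}{|\mathcal{C}(0,m-1)|}=\frac{\lambda^2}{m}\bigl(1+O(\hat{\Delta}(m+1)/M)\bigr),
\]
since the number of potential double pairs is about $\sum_{i<j}\binom{d_i}{2}\binom{d_j}{2}\cdot 2/M^2\approx\lambda^2$.
\end{itemize}
Telescoping, $|\mathcal{C}(l,m)|/|\mathcal{C}(0,0)|\approx \lambda^l\lambda^{2m}/(l!\,m!)$. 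I will then sum over $l,m$ up to $O(\lambda+\log M)$ and handle the tails with the ratio bounds. This yields $\sum_{l,m}|\mathcal{C}(l,m)|=|\mathcal{C}(0,0)|\,e^{\lambda+\lambda^2}(1+O(\hat{\Delta}^2/M))$, which gives the claimed probability of simplicity.

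The main obstacle is keeping the switching error terms uniform in the regime allowed by the hypothesis, namely only $\hat{\Delta}<M/3$, where $\lambda$ may be as large as order $d_1$. The relative errors accumulate over $l,m$ up to about $\lambda$, so they must add up to $O(\hat{\Delta}^2/M)$ and not something larger. To control this, I would first work with exact ratio formulas of the form $N_{\text{forward}}/N_{\text{backward}}$, keep both upper and lower bounds explicit in terms of $\hat{\Delta}$, and then invoke a general summation lemma of the kind McKay uses for such ratio sequences. The condition $\hat{\Delta}<M/3$ is what makes the backward switching counts strictly positive.
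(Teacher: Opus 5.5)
The paper does not prove this statement; it quotes it as a special case of McKay's Theorem 4.6, so there is no in-paper argument to compare with. Your outline is the standard switching route to formulas of this type: pairing model, reduction to the probability of simplicity, classes $\mathcal{C}(l,m)$, loop and double-pair switchings, then summation. Your reduction and the leading ratios $\lambda/l$ and $\lambda^2/m$ are right.

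\textbf{The gap: the error bookkeeping.} This is the whole content of the theorem, and your stated ratio errors do not give $O(\hat\Delta^2/M)$.

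\emph{Where the mass sits.} Your own telescoped formula $|\mathcal{C}(0,m)|\approx|\mathcal{C}(0,0)|\lambda^{2m}/m!$ shows that the mass is at $m\approx\lambda^2$, not at $m$ of order $\lambda$. So the truncation in $m$ must be at $O(\lambda^2+\log M)$, not $O(\lambda+\log M)$.

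\emph{How the errors accumulate.} Write $|\mathcal{C}(0,j)|/|\mathcal{C}(0,j-1)|=\frac{\lambda^2}{j}(1+\delta_j)$ with $|\delta_j|\le C\hat\Delta(j+1)/M$, as you propose. Then at $m\approx\lambda^2$ the product $\prod_{j\le m}(1+\delta_j)$ may be as large as $\exp(\Theta(\hat\Delta\lambda^4/M))$. Your loop ratios, evaluated at $m\approx\lambda^2$, allow a further $\exp(\Theta(\hat\Delta\lambda^3/M))$.

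\emph{Why this is fatal.} $\lambda$ can be of order $d_1$; for $d$-regular sequences $\lambda=(d-1)/2$. So the error is of order $d_1^6/M$ rather than $d_1^4/M$. For example, for $d$-regular graphs with $d=n^{1/4}$ you have $\hat\Delta^2/M=\Theta(n^{-1/4})$ but $\hat\Delta\lambda^4/M=\Theta(n^{1/4})$. Your estimates therefore do not even give the asymptotic formula there.

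Invoking a summation lemma cannot help. Per-step errors of that size, if of one sign, genuinely change the sum by a factor $e^{\Theta(\hat\Delta\lambda^4/M)}$. The problem is the size of the per-step errors, not how they are summed.

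\textbf{What is needed.} You need a sharper switching count in which the part of the error that grows with $m$ (or $l$) does not carry the factor $\hat\Delta$.
\begin{itemize}
\item The $m$-dependent part comes only from forbidding the switching to use the $O(m)$ pairs already in double pairs (or the $l$ loops). This is a relative $O(m/M)$ in the forward count and $O(md_1/(\lambda M))$ in the backward count.
\item The remaining corrections do not depend on $m$: creating a new loop or multiple pair, adjacency of the two chosen vertices, and the diagonal term $\sum_i (d_i)_2^2$. After summation they cost $O(\lambda^2\hat\Delta/M+\lambda d_1^3/M)$ in the exponent.
\end{itemize}
The total error is then $O(\lambda^2\hat\Delta/M+\lambda d_1^3/M+\lambda^3 d_1/M)$. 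This is $O(\hat\Delta^2/M)$ because $\lambda\le(d_1-1)/2$ and $\hat\Delta\ge\frac32 d_1^2$. Loops are handled the same way.

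\textbf{A smaller point.} Your first-moment bound on triple pairs, double loops, etc.\ gives an \emph{additive} $O(\hat\Delta^2/M)$ loss in the proportion of pairings. This converts to the multiplicative factor $e^{O(\hat\Delta^2/M)}$ only when $\hat\Delta^2/M$ is below a small constant. That covers the regime the paper actually uses ($d_1^4=o(\norm{\vec{d}}_1)$ via (A2)), but not the full range $\hat\Delta<\norm{\vec{d}}_1/3$ of the statement.
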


Let $\vec{d}$ be the bi-valued degree sequence in Theorem~\ref{theorem: G(n,d) bi-valued 0-statement}.
Note that the assumption  $\hat{\Delta} < \frac{1}{3}\norm{\vec{d}}_1$ of Theorem~\ref{theorem: Mckay's Deg Seq Enumeration Result} is satisfied since $\hat{\Delta}^2 = \Theta(d_1^4) = o(\norm{\vec{d}}_1)$ by condition~\ref{cond: stronger d_1 bound}. Let $X$ be the number of vertices $v\in U_2$ that have no neighbours in $U_2$. We show that a.a.s.\ $X > n_1$, which then immediately implies that a.a.s.\ $\mathcal{G}(n,\vec{d})$ has no perfect matchings. Let $D(\vec{d})$ denote the number of graphs with degree sequence $\vec{d}$ and by Theorem~\ref{theorem: Mckay's Deg Seq Enumeration Result}, we have
\[
D(\vec{d}) = \displaystyle\frac{\norm{\vec{d}}_1!}{(\norm{\vec{d}}_1/2)!\hspace{1mm} 2^{\norm{\vec{d}}_1/2} \prod\limits_{i = 1}^{n} d_i!} \exp\left(-\lambda - \lambda^2 + O\left(d_1^4/\norm{\vec{d}}_1\right)\right)
\]
where $\lambda  =\lambda(\vec{d}):= \sum\limits_{i = 1}^{n} d_i (d_i-1)/(2\norm{\vec{d}}_1)$.

For each $v\in U_2$ and $Y\subseteq U_1$ such that $|Y|=d_2$, we estimate $D^{v,Y}(\vec{d})$, the number of graphs with degree sequence $\vec{d}$ such that $N(v)=Y$. Let $\vec{d}^{v,Y}$ denote the degree sequence on $[n]\setminus \{v\}$ such that $d^{v,Y}_k = d_k$ for all $k\notin \{v\}\cup Y$ and $d^{v,Y}_k = d_k - 1$ for all $k\in Y$. Then, the set of graphs with degree sequence $\vec{d}$ such that $N(v)=Y$ is bijective to the set of graphs on $[n]\setminus \{v\}$ with degree sequence $\vec{d}^{v,Y}$. Hence, $D^{v,Y}(\vec{d})=D(\vec{d}^{v,Y})$.
Hence,
\[
    \ex{X} = \sum\frac{D(\vec{d}^{v,Y})}{D(\vec{d})},
    \]
where the sum is over all $(v,Y)$ such that 
$v\in U_2$ and $Y\subseteq U_1$ with $|Y|=d_2$.
Obviously $D^{v,Y}(\vec{d})$ are independent of $(v,Y)$ due to symmetry. Hence,
\[    
    \ex{X} = n_2\binom{n_1}{d_2} \frac{D(\vec{d}^{v,Y})}{D(\vec{d})}.
\]
By Theorem~\ref{theorem: Mckay's Deg Seq Enumeration Result}, with  $\varphi = (\lambda(\vec{d}) - \lambda(\vec{d}^{v,Y})) + (\lambda(\vec{d})^2 - (\lambda(\vec{d}^{v,Y}))^2)$ (noting that $\varphi$ is independent of $(v,Y)$ and similarly for $\gamma$ and $\theta$ below) and
\[
\gamma= \left(\frac{\norm{\vec{d}^{v,Y}}_1!}{(\norm{\vec{d}^{v,Y}}_1/2)!\hspace{1mm} 2^{\norm{\vec{d}^{v,Y}}_1/2} }\right)\left(\frac{\norm{\vec{d}}_1!}{(\norm{\vec{d}}_1/2)!\hspace{1mm} 2^{\norm{\vec{d}}_1/2}}\right)^{-1} \quad\text{ and }\quad \theta = \frac{\prod\limits_{i = 1}^{n} d_i!}{\prod\limits_{\substack{1\leq i\leq n,\\ i\neq v}} d^{v,Y}_i!},
\]    
we obtain that    
    \[\ex{X} =  n_2\binom{n_1}{d_2}\theta \gamma\exp\left(\varphi + O\left(d_1^4/\norm{\vec{d}^{v,Y}}_1\right)\right)\sim n_2\binom{n_1}{d_2}\theta\gamma e^{\varphi}.
\]
where the last asymptotic follows from condition~\ref{cond: stronger d_1 bound} and $\norm{\vec{d}^{v,Y}}_1 = \norm{\vec{d}}_1 - 2d_2$. Note that $\theta = d_1^{d_2} d_2!$ and
\[
\gamma = 2^{d_2}\cdot\frac{(\norm{\vec{d}}_1/2)_{d_2}}{(\norm{\vec{d}}_1)_{2d_2}} = 2^{d_2}\cdot\frac{(\norm{\vec{d}}_1/2)^{d_2}}{\norm{\vec{d}}_1^{2d_2}}\exp\left(O\left(\frac{d_2^2}{\norm{\vec{d}}_1}\right)\right)\sim\frac{1}{\norm{\vec{d}}_1^{d_2}}
\]
since $d_2^2 = o(\norm{\vec{d}}_1)$ by condition~\ref{cond: stronger d_1 bound}.
Now, we have
\begin{eqnarray*}
\ex{X} &\sim &n_2\binom{n_1}{d_2} d_1^{d_2}d_2!\frac{e^{\varphi}}{\norm{\vec{d}}_1^{d_2}} \sim n_2\left(\frac{n_1 d_1}{\norm{\vec{d}}_1}\right)^{d_2} = n_2\left(1 - \frac{n_2 d_2}{\norm{\vec{d}}_1}\right)^{d_2}\\
&\sim& n_2\exp\left(-\frac{n_2 d_2^2}{\norm{\vec{d}}_1}\right) = n^{1-o(1)},
\end{eqnarray*}
where the second asymptotic follows from $\binom{n_1}{d_2}\sim  n_1^{d_2}/d_2!$ by condition~\ref{cond: d_2 and n_1 bound} and the following claim, while the last equality follows from $d_2^2/\norm{\vec{d}}_1\ll\log n/n$ and $n_2\sim n$.
\begin{claim}
\label{claim: first moment exp vanishes}
$\varphi = o(1)$.
\end{claim}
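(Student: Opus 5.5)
We need $\varphi=(\lambda(\vec d)-\lambda(\vec d^{v,Y}))+(\lambda(\vec d)^2-\lambda(\vec d^{v,Y})^2)=o(1)$. Write $M=\norm{\vec{d}}_1$ and $M'=\norm{\vec{d}^{v,Y}}_1=M-2d_2$. Set $A=\sum_i d_i(d_i-1)$ and $A'=\sum_{i\ne v} d^{v,Y}_i(d^{v,Y}_i-1)$, so that $\lambda=A/(2M)$ and $\lambda'=A'/(2M')$. We can compute $A-A'$ exactly. Removing $v$ subtracts $d_2(d_2-1)$. Lowering each of the $d_2$ vertices of $Y\subseteq U_1$ from $d_1$ to $d_1-1$ subtracts $d_1(d_1-1)-(d_1-1)(d_1-2)=2(d_1-1)$ each. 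Hence
\[
A-A'=d_2(d_2-1)+2d_2(d_1-1)=O(d_1d_2).
\]

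The plan is to first bound $\lambda$ itself, then the difference $\lambda-\lambda'$, and finally the difference of squares via $\lambda^2-\lambda'^2=(\lambda-\lambda')(\lambda+\lambda')$.

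\textbf{Bounding $\lambda$.} We have $A\le n_1d_1^2+n_2d_2^2$. Since $M\ge nd_2$, condition~\ref{cond: stronger d_1 bound} gives $d_1^2=o(\sqrt{nd_2})=o(\sqrt M)$. By condition~\ref{cond: n_1 small}, $n_1\le n^\delta$, so $n_1d_1^2/M\le n^\delta\, o(\sqrt{M})/M=o(n^\delta/\sqrt{nd_2})=o(1)$ for $\delta<1/2$. This is a genuine gap: for $\delta\ge 1/2$ the estimate does not close. I would still try to bound $n_1d_1^2/M$ using the identity $M=n_1d_1+n_2d_2$, since $n_1d_1^2/M\le d_1$, although that bound is not small. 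Honestly, $\lambda$ need not be $o(1)$. It suffices that $\lambda=O(\sqrt{M})$, because only the product with $(\lambda-\lambda')$ matters. We have $n_2d_2^2/M\le d_2$, and $n_1d_1^2/M\le d_1=o(M^{1/4})$. Hence $\lambda=O(d_1)$, and the same holds for $\lambda'$.

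\textbf{Bounding $\lambda-\lambda'$.} Write
\[
\lambda-\lambda'=\frac{A-A'}{2M}+\frac{A'}{2}\Big(\frac1M-\frac1{M'}\Big).
\]
The first term is $O(d_1d_2/M)$. The second is $O(\lambda'\cdot d_2/M)=O(d_1d_2/M)$. So $\lambda-\lambda'=O(d_1d_2/M)$, which is $o(1)$ because $d_1d_2\le d_1^2=o(\sqrt M)$.

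\textbf{The squares.} Here $|\lambda^2-\lambda'^2|=O(d_1d_2/M)\cdot O(d_1)=O(d_1^2d_2/M)$. Using $d_1^2=o(\sqrt{nd_2})$ and $M\ge nd_2$, this is $o(\sqrt{nd_2}\,d_2/(nd_2))=o(\sqrt{d_2/n})=o(1)$, since $d_2\le d_1=o(n^{1/2})$.

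The main obstacle is controlling the squared term, because $\lambda$ need not be small. I expect the factorization $(\lambda-\lambda')(\lambda+\lambda')$, combined with the crude bound $\lambda=O(d_1)$ and condition~\ref{cond: stronger d_1 bound}, to resolve it as above. Thus $\varphi=o(1)$.
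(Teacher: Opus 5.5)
Your proof is correct and follows essentially the paper's route through Lemma~\ref{lemma: G(n,d) 0-statement Intermediate Lemma}: write $\varphi=(\lambda-\lambda')(1+\lambda+\lambda')$, bound $\lambda-\lambda'=O(T/M+\lambda d_2/M)$ with $T=A-A'=O(d_1d_2)$, bound $\lambda$ crudely, and close with condition~\ref{cond: stronger d_1 bound}. Your bound $\lambda\le (d_1+d_2)/2$ replaces the paper's $O(n_1d_1^2/(nd_2)+d_2)$ and shows that condition~\ref{cond: n_1 small} is not needed for this claim; do delete the abandoned ``$\delta<1/2$'' false start from the write-up.
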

To estimate $\ex X(X-1)$, for each $u,v\in U_2$ and $X,Y\subseteq U_1$, let $\vec{d}^{u,v,X,Y}$ denote the degree sequence on $[n]\setminus \{u,v\}$ where $d^{u,v,X,Y}_k = d_k$ for all $k\notin \{u,v\}\cup X\cup  Y$, $d^{u,v,X,Y}_k = d_k-1$ for all $k\in (X\setminus Y)\cup (Y\setminus X)$, and $d^{u,v,X,Y}_k = d_k-2$ for all $k\in X\cap Y$. As before, let $D^{u,v,X,Y}(\vec{d})$ denote the number of graphs  with degree sequences $\vec{d}$ such that $N(u)=X$ and $N(v)=Y$, and it follows that $D^{u,v,X,Y}(\vec{d})=D(\vec{d}^{u,v,X,Y})$. Hence,
\[
\ex{X(X-1)} = \sum \frac{D(\vec{d}^{u,v,X,Y})}{D(\vec{d})}
\]
where the sum is over all $(u,v,X,Y)$ such that 
$u,v\in U_2$ and $X,Y\subseteq U_1$ with $\abs{X} = \abs{Y}=d_2$.
We split the sum according to the size of $X\cap Y$ and obtain that
\[
\ex{X(X-1)} = \sum\limits_{w = 0}^{d_2}\sum \frac{D(\vec{d}^{u,v,X,Y})}{D(\vec{d})},
\]
where the second sum is over all $(u,v,X,Y)$ such that 
$u,v\in U_2$, $X,Y\subseteq U_1$, $\abs{X} = \abs{Y}=d_2$ and $\abs{X\cap Y} = w$. Obviously, given $w=|X\cap Y|$, $D(\vec{d}^{^{u,v,X,Y}})$ is independent of $(u,v,X,Y)$ due to symmetry. Hence,
\[
\ex{X(X-1)} = n_2(n_2-1)\binom{n_1}{d_2}\sum\limits_{w = 0}^{d_2} \binom{d_2}{w}\binom{n_1-d_2}{d_1-w}\frac{D(\vec{d}^{u,v,X,Y})}{D(\vec{d})}.
\]
Let $\varphi_w = (\lambda(\vec{d}) - (\lambda(\vec{d}^{u,v,X,Y})) + (\lambda(\vec{d})^2 - (\lambda(\vec{d}^{u,v,X,Y}))^2)$, noting that $\varphi_w$ is independent of $(u,v,X,Y)$, and let
\[
\gamma= \left(\frac{\norm{\vec{d}^{u,v,X,Y}}_1!}{(\norm{\vec{d}^{u,v,X,Y}}_1/2)!\hspace{1mm} 2^{\norm{\vec{d}^{u,v,X,Y}}_1/2} }\right)\left(\frac{\norm{\vec{d}}_1!}{(\norm{\vec{d}}_1/2)!\hspace{1mm} 2^{\norm{\vec{d}}_1/2}}\right)^{-1} \quad\text{ and }\quad \theta_w = \frac{\prod\limits_{i = 1}^{n} d_i!}{\prod\limits_{\substack{1\leq i\leq n,\\ i\neq u,v}} (d^{u,v,X,Y})_i!},
\]
by noting that $\theta_w$ is independent of $(u,v,X,Y)$ while $\gamma$ is independent of $(u,v,X,Y)$ \textit{and} $w$, since $\norm{\vec{d}^{u,v,X,Y}}_1 = \norm{\vec{d}}_1 - 4d_2$. Applying Theorem~\ref{theorem: Mckay's Deg Seq Enumeration Result}, we have
\[
\ex{X(X-1)} = \gamma\ n_2(n_2-1)\binom{n_1}{d_2}\sum\limits_{w = 0}^{d_2} \binom{d_2}{w}\binom{n_1-d_2}{d_1-w} \theta_w\exp\left(\varphi_w + O\left(\frac{d_1^4}{\norm{\vec{d}^{u,v,X,Y}}_1}\right)\right)
\]
since $\norm{\vec{d}^{u,v,X,Y}}_1 = \norm{\vec{d}}_1-4d_2\sim\norm{\vec{d}}_1$ by condition~\ref{cond: stronger d_1 bound}, and this condition also implies that
\[
\ex{X(X-1)}\sim\gamma n_2^2\binom{n_1}{d_2}\sum\limits_{w = 0}^{d_2} \binom{d_2}{w}\binom{n_1-d_2}{d_1-w} \theta_w e^{\varphi_w}.
\]
Note that $\theta_w = d_1^{2d_2-w} (d_1-1)^w (d_2!)^2$ and
\[
\gamma = 2^{2d_2}\cdot \frac{(\norm{\vec{d}}_1/2)_{2d_2}}{(\norm{\vec{d}}_1)_{4d_2}} = 2^{2d_2}\cdot\frac{(\norm{\vec{d}}_1/2)^{2d_2}}{\norm{\vec{d}}_1^{4d_2}}\exp\left(O\left(\frac{d_2^2}{\norm{\vec{d}}_1}\right)\right)\sim\frac{1}{\norm{\vec{d}}_1^{2d_2}}
\]
since $d_2^2 = o(\norm{\vec{d}}_1)$ by condition~\ref{cond: stronger d_1 bound}. Letting $\varphi':= \max\limits_{0\leq w\leq d_2} \varphi_w$, we have
\[
\ex{X(X-1)}\sim n_2^2\left(\frac{n_1 d_1}{\norm{\vec{d}}_1}\right)^{2d_2}\sum\limits_{w = 0}^{d_2} \left(\frac{d_1-1}{d_1}\right)^w\binom{d_2}{w}\binom{n_1-d_2}{d_1-w} e^{\varphi_w}\leq n_2^2\left(\frac{n_1 d_1}{\norm{\vec{d}}_1}\right)^{2d_2} e^{\varphi'}
\]
using Vandermonde's identity. Then it follows that $\ex{X(X-1)}\leq (1+o(1))(\ex{X})^2$ by the following claim.

\begin{claim}
\label{claim: second moment exp vanishes}
$\varphi' = o(1)$.
\end{claim}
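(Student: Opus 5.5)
The plan is to bound $|\lambda(\vec{d})-\lambda(\vec{d}')|$ explicitly, uniformly in $w$, where $\vec{d}'=\vec{d}^{u,v,X,Y}$, and then control the quadratic term through $\lambda^2-\lambda'^2=(\lambda-\lambda')(\lambda+\lambda')$. The same computation, with one deleted vertex instead of two, also gives Claim~\ref{claim: first moment exp vanishes}.

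Write $M=\norm{\vec{d}}_1$ and $A=\sum_i d_i(d_i-1)$, so that $\lambda(\vec{d})=A/(2M)$. Passing from $\vec{d}$ to $\vec{d}'$ has three effects.
\begin{itemize}
\item Deleting $u,v$ lowers $A$ by $2d_2(d_2-1)$.
\item Each $k\in X\triangle Y$ has $d_k=d_1$ lowered by $1$. This lowers $A$ by $2(d_1-1)$, and there are at most $2d_2$ such $k$.
\item Each $k\in X\cap Y$ has $d_k=d_1$ lowered by $2$. This lowers $A$ by $4d_1-6$, and there are at most $d_2$ such $k$.
\end{itemize}
Hence $A'=A-N$ with $0\le N=O(d_2^2+d_1d_2)=O(d_1d_2)$, uniformly in $w$, and $M'=M-4d_2$. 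This gives
\[
\lambda(\vec{d})-\lambda(\vec{d}')=\frac{A}{2M}-\frac{A-N}{2(M-4d_2)}=\frac{N}{2(M-4d_2)}-\frac{4d_2A}{2M(M-4d_2)}.
\]

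Next I bound each piece. Since every $d_i\le d_1$, we have $A\le d_1M$. Since $d_2^2=o(M)$, we have $M-4d_2\sim M$. Both terms above are therefore $O(d_1d_2/M)$. Now $M\ge nd_2$, so $d_1d_2/M\le d_1/n$. Condition~\ref{cond: stronger d_1 bound} gives $d_1^2=o(\sqrt{nd_2})$, and $\sqrt{nd_2}\le n$ because $d_2\le n$; so $d_1^2=o(n)$. In particular $d_1/n\to0$, and hence $\lambda(\vec{d})-\lambda(\vec{d}')=O(d_1/n)=o(1)$.

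For the quadratic part, $\lambda(\vec{d}),\lambda(\vec{d}')\le d_1/2$ by the same bound $A\le d_1M$ (and $A'\le d_1M'$). Therefore
\[
\bigl|\lambda(\vec{d})^2-\lambda(\vec{d}')^2\bigr|=O\!\left(\frac{d_1d_2}{M}\cdot d_1\right)=O\!\left(\frac{d_1^2}{n}\right)=o(1),
\]
again using $d_1^2=o(n)$. All of these bounds are uniform over $0\le w\le d_2$, so $\varphi'=\max_w\varphi_w=o(1)$.

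The only delicate step is checking that the conditions of Theorem~\ref{theorem: G(n,d) bi-valued 0-statement} really give $d_1^2=o(n)$, which controls the $\lambda^2$ term. That is where \ref{cond: stronger d_1 bound} together with $d_2\le n$ is used. The rest is bookkeeping of how $A$ and $M$ change.
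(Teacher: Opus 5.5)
Your proof is correct. It shares the paper's skeleton, $\varphi_w=(\lambda-\lambda')(1+\lambda+\lambda')$ with the changes in $\sum_i d_i(d_i-1)$ and in $\norm{\vec{d}}_1$ tracked explicitly, but it uses a leaner set of estimates.

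The paper proves an intermediate lemma that only uses the crude bound $T=O(d_1^2)$ on the drop in $\sum_i d_i(d_i-1)$. It compensates with a refined estimate $\lambda(\vec{d})=O\bigl(n_1d_1^2/(nd_2)+d_2\bigr)$, together with $\lambda(\vec{d})=\Omega(1)$. That refined estimate rests on $n_1d_1=o(n_2d_2)$, and so it needs condition~\ref{cond: n_1 small} in addition to~\ref{cond: stronger d_1 bound}.

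You instead bound the drop more sharply as $N=O(d_1d_2)$. You then use only the trivial bound $\lambda\le d_1/2$ and $\norm{\vec{d}}_1\ge nd_2$. This gives the explicit estimate $|\varphi_w|=O(d_1^2/n)$, uniform in $w$. It needs only $d_1^2=o(n)$, which follows from~\ref{cond: stronger d_1 bound} alone.

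Your route buys three things. It shows that condition~\ref{cond: n_1 small} is not needed for this claim; it is still needed later to get $X>n_1$. The same short computation handles Claim~\ref{claim: first moment exp vanishes}. It also avoids the bookkeeping of the parameter $k$ and the exact form of $T$ in the paper's lemma, where only the orders of magnitude matter.

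The paper's route, in exchange, puts both claims into a single lemma stated for a general $T=O(d_1^2)$ and $k=\Theta(1)$.
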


\noindent Finally, by Chebyshev's inequality, a.a.s.\ $X = n^{1-o(1)} > n_1$, as desired.
\qed

\subsection{Proof of Claims~\ref{claim: first moment exp vanishes} and~\ref{claim: second moment exp vanishes}}

Before proving the two claims, we first prove the following intermediate lemma. Recall that $\vec{d}$ satisfies the conditions in Theorem~\ref{theorem: G(n,d) bi-valued 0-statement}.

\begin{lemma}
\label{lemma: G(n,d) 0-statement Intermediate Lemma}
Let $A = \sum\limits_{i = 1}^{n} d_i(d_i-1)$, $T= O(d_1^2)$, and $B = A - T$. Furthermore, let $A' = A/(2\norm{\vec{d}}_1)$ and $B' = B/(2\norm{\vec{d}}_1 - k d_2)$ for some $k = \Theta(1)$. If $Z = (A' - B')(1 + A' + B')$, then $Z = o(1)$.
\end{lemma}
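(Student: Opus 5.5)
The idea is to bound the two factors of $Z$ separately: show that $A'$ and $B'$ are $O(1)$, so $1+A'+B'=O(1)$, and then show $A'-B'=o(1)$. Write $M=\norm{\vec{d}}_1$.

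For the first factor, note that $A\le \sum_i d_i^2\le d_1 M$, so $A'\le d_1/2$. That crude bound is not enough, so we use the bi-valued structure: $A=n_1d_1(d_1-1)+n_2d_2(d_2-1)$. The second part contributes at most $n_2d_2^2/(2M)\le d_2/2$, which may be large; we actually need $A'=O(1)$ or at least $A'(A'-B')=o(1)$, so we bound it more carefully.
\begin{itemize}
\item Condition \ref{cond: n_1 small} gives $n_1\le n^{\delta}$, so $n_2\sim n$ and $M\ge n d_2$.
\item For the $U_1$ part, $n_1d_1^2/M\le n_1 d_1^2/(nd_2)$, and condition \ref{cond: stronger d_1 bound} gives $d_1^2=o(\sqrt{nd_2})$. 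Hence this part is $o(n_1/\sqrt{nd_2})$, which is $o(1)$ once we use $n_1\le n^\delta$ together with $d_2\gg\log n$, or more safely combine with $n_1d_1\le M$.
\item For the $U_2$ part, $n_2d_2^2/M$ is controlled by condition \ref{cond: n_2 and d_2 bound with norm}: $(n_2d_2^2/M)^2=n_2^2d_2^4/M^2=o(d_2)$, i.e.\ this part is $o(\sqrt{d_2})$.
\end{itemize}
So $A'$ may not be bounded, and I will not try to bound $1+A'+B'$ by a constant. I will instead show directly that $(A'-B')(1+A'+B')=o(1)$, by computing $A'-B'$ precisely and multiplying by the size of $A'$.

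To compute the difference, write
\[
A'-B'=\frac{A}{2M}-\frac{A-T}{2M-kd_2}=\frac{-Akd_2+2MT}{2M(2M-kd_2)}.
\]
Since $d_2\le M/n$, we have $2M-kd_2\sim 2M$, so
\[
|A'-B'|\lesssim \frac{A d_2}{M^2}+\frac{T}{M}=O\!\left(\frac{A'd_2}{M}+\frac{d_1^2}{M}\right).
\]
Then
\[
Z=O\!\left(\frac{A'd_2}{M}+\frac{d_1^2}{M}\right)(1+2A'+o(1))=O\!\left(\frac{d_2}{M}(A'+A'^2)+\frac{d_1^2}{M}(1+A')\right).
\]

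It remains to check the four terms against the conditions.
\begin{itemize}
\item $d_2/M\le 1/n$, and $A'\le d_1/2$ with $d_1^2=o(\sqrt{nd_2})\le o(n)$, so $d_2A'^2/M=O(d_1^2/n)=o(1)$ and similarly $d_2A'/M=o(1)$.
\item $d_1^2/M=o(1)$ by Remark~\ref{remark: G(n,d) remark}.
\item The main term is $d_1^2A'/M\le d_1^3/(2M)$. With $d_1^2=o(\sqrt{nd_2})$ we get $d_1^3=o((nd_2)^{3/4})$, while $M\ge nd_2$, so the ratio is $o((nd_2)^{-1/4})=o(1)$.
\end{itemize}

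The expected obstacle is only bookkeeping: the crude bounds $A'\le d_1/2$ and $|T|=O(d_1^2)$ must suffice. If a sharper bound on $A'$ is needed anywhere, I would use the split $A=n_1d_1(d_1-1)+n_2d_2(d_2-1)$ together with condition \ref{cond: n_2 and d_2 bound with norm}.
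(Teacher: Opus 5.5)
Your proposal is correct, and it takes a genuinely different and leaner route than the paper's proof. Both arguments start from the same exact formula for $A'-B'$. The difference lies entirely in how $A'$ is controlled.

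\textbf{Your route.} You use only the trivial bound $A\le d_1\norm{\vec{d}}_1$, i.e.\ $A'\le d_1/2$, together with $\norm{\vec{d}}_1\ge nd_2$ (the minimum degree being $d_2$) and condition~\ref{cond: stronger d_1 bound}. The dominant term $TA'/\norm{\vec{d}}_1$ is then $O(d_1^3/(nd_2))=o((nd_2)^{-1/4})$. Because you keep $d_1^2/\norm{\vec{d}}_1$ as a separate term, you never need $A'=\Omega(1)$.

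\textbf{The paper's route.} The paper splits $A$ along the two blocks and uses \ref{cond: n_1 small} to get $A'=O(n_1d_1^2/(nd_2)+d_2)$. It then uses $A'=\Omega(1)$ to absorb the lower-order products, and ends with bounds of the form $O(d_1^4/\norm{\vec{d}}_1)$ and $O(d_1^4/(d_2\norm{\vec{d}}_1))$.

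\textbf{What your version buys.} It shows the lemma needs only \ref{cond: stronger d_1 bound}, not \ref{cond: n_1 small} or the bi-valued decomposition. It also sidesteps the paper's stated justification ``$n_1d_1=o(n_2d_2)$'' for its bound on $A'$. That justification cannot hold together with \ref{cond: n_2 and d_2 bound with norm}: that condition gives $\norm{\vec{d}}_1\gg n_2d_2^{3/2}\ge n_2d_2$, hence $n_1d_1\gg n_2d_2$. The paper's $O$-bound on $A'$ is still true, simply because $\norm{\vec{d}}_1\ge n_2d_2$ always. In fact, under \ref{cond: n_2 and d_2 bound with norm} one has $\norm{\vec{d}}_1\sim n_1d_1$ and $A'\sim d_1/2$. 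So your ``crude'' bound is essentially sharp, and tighter than the paper's estimate $n_1d_1^2/(nd_2)+d_2$.

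\textbf{One thing to clean up.} Drop the preliminary bullet claiming that the $U_1$-contribution $n_1d_1^2/\norm{\vec{d}}_1$ to $A'$ is $o(1)$. That is false in general: for the sequences of Example~\ref{example: G(n,d) Ideal 0-statement Example} in the $0$-statement regime it is of order $n^{1/8}$. Your final argument correctly never uses it.
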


\begin{proof}
Note that
\[
A' - B' = \frac{A}{2\norm{\vec{d}}_1} - \frac{A-T}{2\norm{\vec{d}}_1 - k d_2} = \frac{2T\norm{\vec{d}}_1 - kA d_2}{2\norm{\vec{d}}_1(2\norm{\vec{d}}_1 - k d_2)} = \frac{T}{2\norm{\vec{d}}_1-k d_2} - A'\cdot\frac{k d_2}{\norm{\vec{d}}_1-k d_2},
\]
and so it follows that
\[
A' + B' = A' + \left(1 +\frac{k d_2}{\norm{\vec{d}}_1-k d_2}\right)A' - \frac{T}{2\norm{\vec{d}}_1-k d_2} = \frac{2\norm{\vec{d}}_1 - k d_2}{\norm{\vec{d}}_1 - k d_2}\cdot A' - \frac{T}{2\norm{\vec{d}}_1-k d_2}.
\]
Note that $A' = \Theta((n_1 d_1^2 + n_2 d_2^2)/(n_1 d_1 + n_2 d_2))= \Omega(1)$. Furthermore, since $n_1 d_1 \leq n^{\delta}\sqrt{d_2} = o(n_2 d_2)$ by conditions~\ref{cond: stronger d_1 bound} and ~\ref{cond: n_1 small}, we have
\begin{equation}
\label{eqn: A' bound}
    A' = O\left(\frac{n_1 d_1^2 +n_2 d_2^2}{n_2 d_2}\right) = O\left(\frac{n_1 d_1^2}{nd_2}+ d_2\right).
\end{equation}
Since $T = O(d_1^2) = o(\norm{\vec{d}}_1)$ by~\ref{cond: stronger d_1 bound}, $k d_2 = o(\norm{\vec{d}}_1)$, and $A'=\Omega(1)$, we have
\begin{align*}
    Z &= \left(\frac{T}{2\norm{\vec{d}}_1-k d_2} - A'\cdot\frac{k d_2}{\norm{\vec{d}}_1-k d_2}\right)\left(1 + \frac{2\norm{\vec{d}}_1 - k d_2}{\norm{\vec{d}}_1 - k d_2}\cdot A' - \frac{T}{2\norm{\vec{d}}_1-k d_2}\right)
    \\
    &= \left(O\left(\frac{T}{\norm{\vec{d}}_1}\right) - O\left(\frac{d_2 A'}{\norm{\vec{d}}_1}\right)\right)\left(O\left(A'\right) - O\left(\frac{T}{\norm{\vec{d}}_1}\right)\right)
    \\
    &= O\left(\frac{T}{\norm{\vec{d}}_1} A'+\frac{T^2}{\norm{\vec{d}}_1^2}+\frac{d_2}{\norm{\vec{d}}_1} (A')^2+\frac{T d_2}{\norm{\vec{d}}_1^2} A'\right)
    \\
    &=O\left(\frac{T}{\norm{\vec{d}}_1} A'+\frac{d_2}{\norm{\vec{d}}_1} (A')^2\right),
\end{align*}
where the last equality follows from the fact that $T/\norm{\vec{d}}_1 = o(1) = o(A')$. By~\eqref{eqn: A' bound},
we have
\[
\frac{d_2 (A')^2}{\norm{\vec{d}}_1} = O\left(\frac{n_1^2 d_1^4}{n^2 d_2\norm{\vec{d}}_1} +\frac{d_2^3}{\norm{\vec{d}}_1}\right) = O\left(\frac{n_1^2 d_1^4}{n^2 d_2\norm{\vec{d}}_1} +\frac{d_1^4}{d_2\norm{\vec{d}}_1}\right) = O\left(\frac{d_1^4}{d_2\norm{\vec{d}}_1}\right) = o(1),
\]
using the fact that $n_1^2/n^2\leq n^{2\delta-2} = o(1)$ by condition~\ref{cond: n_1 small} and $d_1^4 = o(\norm{\vec{d}}_1)$ by condition~\ref{cond: stronger d_1 bound}. Furthermore, we have
\[
\frac{T}{\norm{\vec{d}}_1} A' = O\left(\frac{n_1 d_1^4}{n d_2\norm{\vec{d}}_1} + \frac{d_1^2 d_2}{\norm{\vec{d}}_1}\right) = O\left(\frac{d_1^4}{\norm{\vec{d}}_1}\right) = o(1),
\]
using the fact that $n_1/(nd_2)\leq n^{\delta-1} = o(1)$ by condition~\ref{cond: n_1 small} and $d_1^4 = o(\norm{\vec{d}}_1)$  by condition~\ref{cond: stronger d_1 bound}. Hence, $Z = o(1)$, as desired.
\end{proof}

\subsubsection{Proof of Claim~\ref{claim: first moment exp vanishes}}

Let $A = \sum\limits_{i = 1}^{n} d_i(d_i-1)$ and $B = \sum\limits_{\substack{1\leq i\leq n,\\ i\neq v}} d^{v,Y}_i (d^{v,Y}_i-1)$ and note that
\begin{center}
    $B = \displaystyle\sum\limits_{i\notin\{v\}\cup Y} d_i(d_i-1) + \sum\limits_{i\in Y} (d_i-1)(d_i-2) = A - d_2(d_2-1) - 2d_2 (d_1-1)$
\end{center}
since $d_i = d_1$ for all $i\in Y$ and $\abs{Y} = d_2$. Furthermore, note that $B = A - T$ where $T = d_2 (d_2 + 2d_1 - 3) = O(d_1^2)$, and so by Lemma~\ref{lemma: G(n,d) 0-statement Intermediate Lemma} with $k = 2$, it follows that $\varphi = o(1)$, as desired.\qed

\subsubsection{Proof of Claim~\ref{claim: second moment exp vanishes}}

It suffices to show that $\varphi_w = o(1)$ for all $0\leq w\leq d_2$. Let $w\in\{0,1,\ldots,d_2\}$ be arbitrary and let $(u,v,X,Y)$ be subject to the constraints in Section~\ref{section: proof of G(n,d) 0-statement} such that $\abs{X\cap Y} = w$.
Let 
\begin{align*}
 A &= \sum\limits_{i = 1}^{n} d_i(d_i-1)\\  B &= \sum\limits_{\substack{1\leq i\leq n,\\ i\neq u,v}} d^{u,v,X,Y}_i (d^{u,v,X,Y}_i-1) \\
 S_1 &= \{u,v\}\cup X\cup Y\\
 S_2&= (X\setminus Y)\cup (Y\setminus X)\\
 S_3 &= X\cap Y.
\end{align*}
Note that
\begin{align*}
    B &= \sum\limits_{i\notin S_1} d_i(d_i-1) + \sum\limits_{i\in S_2} (d_i-1)(d_i-2) +\sum\limits_{i\in S_3} (d_i-2)(d_i-3)
    \\
    &= A - 2d_2(d_2-1) - 4(d_2-w) (d_1-1) - w(4d_1-6)
    \\
    &= X - 2d_2(d_2-1) - 4d_2(d_1-1) + 2w
\end{align*}
since $d_i = d_1$ for all $i\in X\cup Y$ and $\abs{X\setminus Y} = \abs{Y\setminus X} = d_2-w$. Furthermore, note that $B = A - T$ where $T = 2d_2 (d_2 + d_1 - 3) + 2w = O(d_1^2)$, and so by Lemma~\ref{lemma: G(n,d) 0-statement Intermediate Lemma} with $k = 8$, it follows that $\varphi_w = o(1)$, as desired.\qed

\subsection{Proof of Example~\ref{example: G(n,d) Ideal 0-statement Example}}
\label{section: Proof of G(n,d) concrete threshold}

Note that $\vec{d}$ satisfies conditions~\ref{cond: d_2 and n_1 bound}, \ref{cond: stronger d_1 bound} and~\ref{cond: n_1 small} (with $\delta = 15/16$).
By Lemma~\ref{lemma: bi-valued PM equivalence}, $\mathfrak{G}_{\texttt{PM}}^{\alpha}(\vec{U}_{\vec{d}}, P_{\vec{d}})$ has a perfect matching if and only if $d_2^2/\norm{\vec{d}}_1\geq\alpha\log n/n_2$ since $n_2 > n_1$. Note that
\begin{center}
    $\displaystyle\frac{d_2^2}{\norm{\vec{d}}_1} = \frac{d_2^2}{n^{17/16} + (n - n^{15/16}) d_2}\sim \frac{d_2^2}{n^{17/16} + nd_2}$
\end{center}
If $d_2\gg n^{1/16}$, then $d_2^2/\norm{\vec{d}}_1\sim d_2/n\gg\log n/n$. Otherwise,
\begin{center}
    $\displaystyle\frac{d_2^2}{\norm{\vec{d}}_1}\sim\frac{d_2^2}{n^{17/16}} = \frac{\log n}{n}\cdot\frac{d_2^2}{n^{1/16}\log n}$,
\end{center}
and so $d_2^2/\norm{\vec{d}}_1\gg\log n/n$ if $d_2\gg n^{1/32}\sqrt{\log n}$, and $d_2^2/\norm{\vec{d}}_1\ll\log n/n$ if $d_2\ll n^{1/32}\sqrt{\log n}$. 

This implies that $\mathfrak{G}_{\texttt{PM}}^{\alpha}(\vec{U}_{\vec{d}}, P_{\vec{d}})$ has a perfect matching for some $\alpha = \omega(1)$ if $d_2\gg n^{1/32}\sqrt{\log n}$. By Theorem~\ref{theorem: G(n,d) 1-statement}, it follows that a.a.s.\ $\mathcal{G}(n,\vec{d})$ has a perfect matching. On the other hand, if $d_2\ll n^{1/32}\sqrt{\log n}$, we have
\begin{center}
    $\displaystyle\frac{n_2^2 d_2^3}{\norm{\vec{d}}^2_1}\sim\frac{d_2^3}{(n^{1/16}+d_2)^2}\leq \frac{d_2^3}{n^{1/8}} = o(1)$.
\end{center}
Hence, $\vec{d}$ satisfies all conditions of Theorem~\ref{theorem: G(n,d) bi-valued 0-statement} (A1)--(A5). It follows then that a.a.s.\ $\mathcal{G}(n,\vec{d})$ has no perfect matching. \qed

\bibliographystyle{plain}
\bibliography{references}

\end{document}